\theoremstyle{theorem}
\newtheorem{theorem}{Theorem}
\newtheorem{lemma}[theorem]{Lemma}
\newtheorem{fact}[theorem]{Fact}
\theoremstyle{definition}
\newtheorem{definition}[theorem]{Definition}
\newtheorem{remark}[theorem]{Remark}
\newtheorem{question}[theorem]{Question}
\newtheorem{axiom}[theorem]{Axiom}
\renewcommand{\leq}{\leqslant}
\renewenvironment{description}[1][0pt]
  {\list{}{\labelwidth=0pt \leftmargin=#1
   }}
  {\endlist}
\title{Universism and Extensions of $V$}
\author{Carolin Antos\footnote{Zukunftskolleg, University of Konstanz. E-mail: carolin.antos-kuby@uni-konstanz.de.}, Neil Barton\footnote{Fachbereich Philosophie, University of Konstanz. E-mail: neil.barton@uni-konstanz.de.}, and Sy-David Friedman\footnote{Kurt G\"{o}del Research Center. E-mail: sdf@logic.univie.ac.at.}}
\date{22 June 2020\footnote{Order of authors is alphabetical. We wish to thank Joel Hamkins, Toby Meadows, Jonas Reitz, and Kameryn Williams for insightful and helpful comments, as well as audiences in Birmingham, Cambridge, Toulouse, and Vienna for the opportunity to present the ideas and subsequent discussion. Special mention must be made of Marcus Giaquinto, Bob Hale, Alex Paseau, Ian Rumfitt, and Chris Scambler, whose help during the preparation and examination of the second author's PhD thesis (where versions of this material appeared) were absolutely invaluable. We wish also to thank three anonymous reviewers from \textit{The Review of Symbolic Logic}, whose careful reading and commentary helped to improve the paper. The first author was supported by the Zukunftskolleg, the European Commission (Marie Curie Individual Fellowship, Proposal number 752211), and the VolswagenStiftung (Project: \textit{Forcing: Conceptual Change in the Foundations of Mathematics}). The second author was supported by the UK Arts and Humanities Research Council, the FWF (Austrian Science Fund, Project P 28420), and VolswagenStiftung (Project: \textit{Forcing: Conceptual Change in the Foundations of Mathematics}). The third author was supported by the FWF (Austrian Science Fund, Project P 28420).}}
\begin{document}

\maketitle

\begin{abstract}

A central area of current philosophical debate in the foundations of mathematics concerns whether or not there is a single, maximal, universe of set theory. Universists maintain that there is such a universe, while Multiversists argue that there are many universes, no one of which is ontologically privileged. Often model-theoretic constructions that add sets to models are cited as evidence in favour of the latter. This paper informs this debate by developing a way for a Universist to interpret talk that seems to necessitate the addition of sets to $V$. We argue that, despite the prima facie incoherence of such talk for the Universist, she nonetheless has reason to try and provide interpretation of this discourse. We present a method of interpreting extension-talk ($V$-logic), and show how it captures satisfaction in `ideal' outer models and relates to impredicative class theories. We provide some reasons to regard the technique as philosophically virtuous, and argue that it opens new doors to philosophical and mathematical discussions for the Universist.
\end{abstract}

\section*{Introduction}

Recent discussions of the philosophy of set theory have often focussed whether there is a maximal set-theoretic universe or structure for our discourse about sets.\footnote{Obviously, because of the set-theoretic paradoxes, we are using the terms `universe' and `structure' informally here, rather than as understood through their set-theoretic definitions. As we'll see, different philosophical views will understand this term differently. The unified core among all views seems to be that a universe, in this philosophical sense, provides a domain for our first-order set-theoretic quantifiers.} We begin with two views:

\begin{description}
\item[Universism.] There is a proper-class-sized universe (denoted by `$V$'), which is (a) unique, (b) cannot be extended, and (c) contains all the sets.
\end{description}

However, some have seen set-theoretic results (such as forcing and ultrapowers) as providing evidence that the subject matter of set theory is not constituted by some single  `absolute' universe, but rather a plurality of different universes.\footnote{See here, for example, \cite{Hamkins2012a} or \cite{Steel2014a}.} Universism thus contrasts sharply with the following view:

\begin{description}
\item[Multiversism.] The subject matter of set theory is constituted by a plurality of different universes, and no single one of them contains all\footnote{Of course, the Multiversist is likely to run into the usual problems of expressibility and generality relativism here with their use of the quantifier `all'. There are options available, however---see \cite{RayoUzquiano2006a} for an excellent collection of essays on the topic.} the sets.
\end{description}

This question of ontology can be linked to the following idea: Do set theorists merely study models of set theory (similar to a group theorist's study of groups), or can set theory be understood as about a single domain of sets? The former idea is naturally motivated by Multiversism, and the latter by Universism. The current paper concerns how a Universist might bring a version of the algebraic Multiverse-inspired perspective to bear on her universe of sets.

There are several ways of fleshing out Multiversism. For instance, one might hold that the intended universes contain all the ordinals, but that subsets can be added to any given universe.\footnote{See \cite{Steel2014a} and \cite{Meadows2015a}.} Alternatively, one might hold that there is a definite powerset operation, but any universe can be extended to another with more ordinals.\footnote{See here \cite{Rumfitt2015a}, \cite{Isaacson2011a}, and \cite{Hellman1989a}, with historical roots in \cite{Zermelo1930a}.} Still further, one might take a universe to be extendable both by adding more ordinals and/or by adding subsets.\footnote{See \cite{ArrigoniFriedman2013a}.} Some views advocate that in addition to extension by adding subsets and ordinals, every intended universe is non-well-founded from the perspective of another.\footnote{See \cite{Hamkins2012a}.}

The present paper is concerned with how we might interpret \textit{width} extensions of universes (i.e. the addition of subsets but not ordinals) for the Universist, clarifying the dialectic with respect to certain constructions in set theory (which we discuss later), and showing how she can use resources one might be tempted to think are prohibited. We will therefore contrast Universism with the following species of Multiversism:

\begin{description}
\item[Width Multiversism.] Letting $\mathcal{V}$ be a variable ranging over universes, for any particular set-theoretic universe $\mathcal{V}$, there is another universe $\mathcal{V}'$ such that:

\begin{enumerate}[(i)]
\item $\mathcal{V}'$ is an end extension of $\mathcal{V}$.\footnote{$\mathcal{V}'$ is an \textit{end-extension} of $\mathcal{V}$ iff whenever $\mathcal{V}'$ thinks that $a \in b$ (for $b \in \mathcal{V}$) then $\mathcal{V}$ thinks that $a \in b$. In other words, $\mathcal{V}'$ cannot see new elements of sets in $\mathcal{V}$ that $\mathcal{V}$ was oblivious to. This version of Width Multiversism is formulated so as to not depend on an independent notion of transitivity, and so could work on pictures such as \cite{Hamkins2012a}. If one believes that there is an absolute notion (philosophically speaking) of transitivity, then one might wish to replace this condition with the stronger claim that both $\mathcal{V}$ and $\mathcal{V}'$ are transitive. Nothing hangs on this modification for the points we wish to make.}
\item $\mathcal{V}$ is a proper subclass of $\mathcal{V}'$.
\item $\mathcal{V}$ and $\mathcal{V}'$ have the same ordinals.
\end{enumerate}
\end{description}

One critique of the Universist's position from width multiversists concerns how they are able to interpret talk of width extensions (i.e. constructions that add subsets, but not ordinals, to some $\mathcal{V}$).\footnote{See here \cite{Hamkins2012a}, which we discuss in detail later.} As we shall argue in detail later, certain axioms appear to require higher-order relations between universes and their extensions in order to be formulated properly. Just to briefly foreshadow, we shall discuss the Inner Model Hypothesis\footnote{See \cite{Friedman2006a}.}, the Axiom of $\sharp$-Generation\footnote{See \cite{FriedmanHonzik2016a}.}, and certain axioms coming from set-theoretic geology\footnote{See \cite{FuchsHamkinsReitz2015a}.}. This paper is thus addressed towards the following pair of questions:

\begin{enumerate}[(1.)]
\item Can the Universist provide an interpretation of the discourse required to formulate these extension-requiring axioms as concerned with $V$? 
\item How \textit{natural} is the interpretation they can provide?
\end{enumerate}

\noindent We shall argue for the following claims:

\begin{enumerate}[(I)]
\item \textit{If} one holds that the Universist should be able to interpret set-theoretic discourse \textit{naturally} (in a sense we make precise later), \textit{then} there is pressure to provide an interpretation of axioms that \textit{seem} concerned with higher-order relations between $V$ and extensions thereof. 
\item There is a logical system ($V$-logic), formalisable with the use of resources beyond first-order, that suffices to provide an interpretation of these axioms. 
\item By arguing for the use of higher-order set theory, and the existence of countable transitive models elementarily equivalent to $V$ for portions of its higher-order theory, the Universist can also provide a \textit{natural} interpretation of these axioms. 
\end{enumerate}

The main goal is thus philosophical; to argue that the Universist has a well-motivated way of interpreting axioms that seem to need extensions of $V$. However, along the way we will articulate several mathematical theories and results that are useful in establishing this conclusion. Some readers may only be interested in the mathematical details, and these can be understood solely from reading sections 4,5, and 6. Other readers may not be interested in the mathematical details, in which case the summary just before the start of section 6 can be treated as a black box, and sections 4 and 5 skipped. In more detail, here's the plan:

 After these introductory remarks, we discuss (\S1) a challenge for the Universist; to make sense of talk concerning extensions of $V$. We discuss a constraint on interpreting extensions that motivates Multiversist criticisms---that the interpretation provided preserve as much naive thinking as possible. We then (\S2) discuss a way in which a Universist might miss insight from extensions of $V$: One can formulate axioms using extensions that relate to $V$. We mention some existing interpretations, in particular the use of countable transitive models. Next (\S3) we identify a way this strategy (as stated) might be viewed as \textit{limiting}; it fails to account for axioms relating to higher-order properties. We provide some examples of these; Inner Model Hypotheses, $\sharp$-generation, and certain axioms coming from set-theoretic geology. The next sections (\S4, \S5, and \S6) provide the elements of a positive response to the problem. \S4 provides an exposition of a logical system ($V$-logic) that suffices to interpret extensions. \S5 then shows how $V$-logic can be coded within class theory. \S6 then discusses how we can use the mathematical results of \S4 and \S5 to reduce $V$-logic to the countable, and argues that this interpretation is philosophically natural. Finally (\S7) we identify some open questions and conclude that new avenues of philosophical and mathematical research are opened by this interpretation of extensions.

\section{Extensions and insight}

Before we start getting into the details, some philosophical set up is necessary to elucidate the problem and some constraints on a solution. In this section we do this by explaining some commitments a Universist might have, some challenges raised by criticisms of Universism and the constraints they suggest, and the rough shape of the problem. Specific examples will then be given more detailed consideration in \S2.

One salient feature of the Universist's position is that often they can find interpretations of statements involving extensions.\footnote{In certain cases they may need to use large cardinals, but we suppress this detail for the moment.} A good example here is with \textit{set forcing}, which adds a subset $G$ to a universe $\mathcal{V}$ (for nontrivial forcing). From the Universist's perspective there are a number of options for interpreting occurrences of terms like `$V[G]$' in the practice of set theory. They could, for example, interpret this via the use of the forcing relation or taking `$V$' to denote some countable transitive model of the required form.\footnote{For some philosophical discussion of these options regarding forcing see \cite{Barton2019b}.} There is thus a question of if and/or why the Universist should be perturbed by the use of extensions in set-theoretic practice. Width Multiversists are well-aware of this strategy and often supplement the mere existence of model-construction methods with appeals to philosophical desiderata. Hamkins is a somewhat representative example here:

\begin{quote}
A stubborn geometer might insist---like an exotic-travelogue writer who never actually ventures west of seventh avenue---that only Euclidean geometry is real and that all the various non-Euclidean geometries are merely curious simulations within it. Such a position is self-consistent, although stifling, for it appears to miss out on the geometrical insights that can arise from the other modes of reasoning. Similarly, a set theorist with the universe view can insist on an absolute background universe $V$, regarding all forcing extensions and other models as curious complex simulations within it. (I have personally witnessed the necessary contortions for class forcing.) Such a perspective may be entirely self-consistent, and I am not arguing that the universe view is incoherent, but rather, my point is that if one regards all outer models of the universe as merely simulated inside it via complex formalisms, one may miss out on insights that could arise from the simpler philosophical attitude taking them as fully real. (\cite{Hamkins2012a}, p426)
\end{quote}

 Central to Hamkins' point is that often set-theorists will examine extensions of universes whilst using the symbol `$V$' to denote the ground model. `$V$' and terms for sets in the extension cannot {\it literally} denote $V$ and sets in an extension of $V$ for the Universist, as in that case there would be sets outside $V$ (by the design of the extending construction), and hence her position would be false. Thus, she has to reinterpret use of the relevant theoretical terms so as not to commit herself to sets outside $V$ (for example, in the case of forcing extensions, through the use of the forcing relation, countable transitive models, or Boolean ultrapower map). Hamkins' point is that the kinds of interpretation given (in his words ``curious complex simulations'') obscure possible mathematical insight. While we remain agnostic on to what degree the Universist should be troubled by Hamkins' remarks, it is nonetheless a worthwhile philosophical and mathematical endeavour (and an important motivator for this paper) to examine ways in which the Universist might counteract this potential loss of mathematical `insight'. A few remarks are thus in order to clarify what this might come down to.

One important point that we will spell out in detail later is that there are axioms that \textit{appear} to require the use of extensions of $V$ in their formulation, and this is the problem that forms the focus of this paper. We will examine interpretations that legitimise the use of such talk on the Universist's picture. The position is \textit{somewhat} similar to the one Hilbert took concerning infinite sets (under one interpretation of a period of Hilbert's thought)---infinite sets for him are fictitious but discourse involving them might be useful insofar as it facilitates an understanding of finitary objects. Similarly for the Universist, axioms involving extensions are useful insofar as they allow us to make claims concerning $V$, even if the extensions do not strictly speaking exist. We therefore pose the following:

\begin{description}
\item[Hilbertian Challenge.]\footnote{We use the name `Hilbertian Challenge' to echo Hilbert's desire to show that transfinite resources would never lead to finitary contradiction. Of course, by G\"{o}del's Second Incompleteness Theorem, Hilbert's Programme in its original form was doomed from the outset (though for a heroic defence of Hilbert's aims see \cite{Detlefsen1986a}, \cite{Detlefsen1990a}, and \cite{Detlefsen2001a}). In the present context then, we want to provide philosophical and mathematical reasons to accept the use of extension-talk in formulating axioms about $V$. There is a substantial and interesting question as to {\it how} Hilbertian this challenge really is. We certainly do not claim that it accords with all of Hilbert's writing. Indeed, one might take Hilbert as requiring {\it conservativity} rather than merely a lack of contradictions. All we wish to identify here is that there are certain parallels between Hilbertian Finitism and Universism. Thanks to Giorgio Venturi for emphasising this point.}  Provide philosophical reasons to legitimise the use of extra-$V$ resources for formulating axioms concerning $V$.
\end{description}

Before we embark on the details, a salient question here is exactly how the Hilbertian Challenge should be answered. It is one thing to provide an interpretation of extensions, and another for that interpretation to be {\it good} or \textit{natural} facilitating the `insight' Hamkins talks about. In order to examine the strongest possible challenge to Universism, we therefore impose the following additional constraint: 

\begin{description}
 \item[The Methodological Constraint.] In responding to the Hilbertian Challenge, do so in a way that accounts for as much as possible of our naive thinking about extensions and links it to structural features of $V$. In particular, if we wish to apply an extending construction to $V$, there should be an {\it actual} set-theoretic model, resembling $V$ as much as possible, that has an extension similar to the one we would like $V$ to have.\footnote{A similar constraint is examined solely with respect to forcing constructions in \cite{Barton2019b}, where some conditions on the `naturalness' of interpretations of forcing are discussed.}
\end{description}

 Immediately there is a problem: the most `naive' way of interpreting the talk would simply be to interpret `$V$' as denoting $V$, and `ideal' sets as denoting actual sets outside $V$, which is obviously problematic. However,  we should keep in mind that the Universist only wishes to make claims (to be discussed later) about sets \textit{in $V$} using talk that appears to involve resources external to $V$. She is thus not interested in objects that would have to be outside $V$ were they to exist. Rather her interest is in the objects of $V$, and how our thinking relates to {\it them}. Thus, the challenge is to see {\it how much} of our naive thinking can be interpreted on her picture, without {\it actually} committing to the existence of sets outside $V$.

In \cite{Barton2019b}, the second author argued that many insights relating to \textit{forcing} could be captured through the use of a countable transitive model elementarily equivalent to $V$. Here we will extend this line of thought, arguing that a Universist can capture far more than one might think (even statements concerning certain higher-order axioms) and that this paves the way for new and interesting philosophical and mathematical discussions from Universist points of view. In particular, we shall argue that we can capture more than just {\it forcing} extensions, but can in fact naturally interpret talk concerning {\it arbitrary} extensions of $V$ on a Universist picture. Indeed, we shall show that using some impredicative class theory, we can capture satisfaction in arbitrary outer models in which $V$ appears standard. This provides a definition of satisfaction in both forcing extensions of various kinds and also non-forcing extensions (such as if $V$ resulted from a generating sharp). Reduction of this situation to the countable (again, possibly using impredicative class theory) then facilitates a philosophically virtuous way of interpreting extension-talk for a Universist. 

\section{Formulating axioms with extensions}

With the philosophical backdrop in place, we will now state the rough abstract shape of the problem we shall consider. In formulating axioms, we shall see that the use of `ideal' objects outside $V$ can lead to triviality or apparent falsehood, even when we are trying to make a claim about objects within $V$. Before we dig into the details, a toy example is helpful to understand how consideration of extensions might be useful for making a claim about $V$. Let $\Phi$ and $\Xi$ be conditions on universes. Suppose we wish to assert the following principle about $V$:

\begin{description}
 \item[\textbf{Principle-$\Xi^V$.}] If there is a proper width extension of $V$ such that $\Phi$, then $\Xi$ holds of $V$.
\end{description}

The problem with Principle-$\Xi^V$ is that, given a naive interpretation, it will always come out as true, but fail to capture the intended aspect of $V$ (namely that $\Xi$ holds of $V$). For the antecedent (on its natural reading) is trivially false, and so the conditional is true. But this provides us with no reason to think that $\Xi$ is actually true of $V$ which, presumably, was the intended consequence of asserting the putative axiom in the first place.

Some axioms can indeed be formulated in this way. Consider the following case. {\it Martin's Axiom} is a well-known proposed axiom, and is normally formulated as follows:\footnote{One can also find a use of this analogy in \cite{BartonFriedman2017a}.}

 \begin{definition}
  ($\mathbf{ZFC}$) {\it Martin's Axiom.} Let $\kappa$ be a cardinal such that $\kappa < |\mathcal{P}(\omega)|$. $\mathsf{MA}(\kappa)$ is the claim that for any partial order $\mathbb{P}$ in which all maximal antichains are countable (i.e. $\mathbb{P}$ has the countable chain condition), and any family $\mathcal{D}$ of dense sets of $\mathbb{P}$ such that $|\mathcal{D}| \leq \kappa$, there is a filter $F$ on $\mathbb{P}$ such that for every $D \in \mathcal{D}$, $F \cap D \not = \emptyset$. {\it Martin's Axiom} (or $\mathsf{MA}$) is then the claim that $\forall \kappa < |\mathcal{P}(\omega)|$, $\mathsf{MA}(\kappa)$.
 \end{definition}

Effectively, Martin's Axiom rendered in this form states that the universe has already been saturated under forcing of a certain kind.\footnote{The same goes for other similar forcing axioms.} However, we could equivalently formulate Martin's Axiom as the following {\it absoluteness} principle:

 \begin{definition} ($\mathbf{ZFC}$) \cite{Bagaria1997a} {\it Absolute-$\mathsf{MA}$.} We say that $\mathfrak{M}$ satisfies {\it Absolute-$\mathsf{MA}$} iff whenever $\mathfrak{M}[G]$ is a generic extension of $\mathfrak{M}$ by a partial order $\mathbb{P}$ with the countable chain condition in $\mathfrak{M}$, and $\phi(x)$ is a $\Sigma_1(\mathcal{P}(\omega_1))$ formula (i.e. a first-order formula containing only parameters from $\mathcal{P}(\omega_1)$), if $\mathfrak{M}[G] \models \exists x \phi(x)$ then there is a $y$ in $\mathfrak{M}$ such that $\phi(y)$.
 \end{definition}

This version of Martin's Axiom is interesting given our current focus; it asserts that if something is true in an extension of a particular kind, then it already holds in $\mathfrak{M}$. In this way, it conforms to the natural idea for the Universist that if a set of a certain kind is \textit{possible} then it is (in some sense) \textit{actual}.\footnote{A full examination of this line of thinking is outside the scope of the current paper, but further discussion of the idea is available in \cite{Bagaria2005a}, \cite{Bagaria2008a}, and \cite{BartonSa}.} Suppose then that the Universist is only aware of Absolute-$\mathsf{MA}$ and not Martin's Axiom as usually stated. Supposing that she viewed it as a maximality principle worthy of study, could she meaningfully analyse the axiom for its truth or falsity in $V$ despite its apparent reference to extensions?

The answer is clearly ``Yes!''. This is because (as will be familiar to specialists) despite the fact that the Universist does not countenance the literal existence of the extensions, she can nonetheless capture the notion of {\it satisfaction in a particular set-generic forcing extension} using formulas that are first-order definable over $V$. More specifically, given a formula $\phi$ (or set of formulas of some bounded complexity), the Universist can define a class of $\mathbb{P}$-names in $V$, and a relation $\Vdash_\mathbb{P}$ (known as a {\it forcing relation}) such that: For $p \in \mathbb{P}$, if $p$ were in some (`ideal', `non-existent') $\mathbb{P}$-generic $G$, and $p \Vdash_\mathbb{P} \phi$ holds in $V$, then $V[G]$ would have to satisfy $\phi$ were it to exist. Moreover, if some `ideal' $V[G]$ were to satisfy $\phi$, then there is a $q \in G \subseteq \mathbb{P}$ such that $q \Vdash_{\mathbb{P}} \phi$.\footnote{See \cite{Kunen2013a}, Ch. IV (esp. \S IV.5.2) for details.} In this way, her $V$ has access to the satisfaction relation of `ideal' outer models. To be clear, from the Universist perspective, all she is really doing here is schematically talking about the various relations $\Vdash_\mathbb{P}$ for the relevant $\phi$, and various $q \in \mathbb{P}$ in her model, it just so happens that this talk of $\Vdash_\mathbb{P}$ mimics what would be true in extensions of $V$ (were they to exist). The Universist can then reformulate Absolute-$\mathsf{MA}$ as follows:

\begin{definition}
 ($\mathbf{ZFC}$) {\it Absolute-$\mathsf{MA}^{\Vdash}$.} We say that $V$ satisfies {\it Absolute-$\mathsf{MA}^{\Vdash}$} iff whenever $\mathbb{P} \in V$ is a partial order with the countable chain condition in $V$, and $\phi(x)$ is a $\Sigma_1(\mathcal{P}(\omega_1))$ formula, if there is a $p \in \mathbb{P}$ and $\Vdash_\mathbb{P}$, such that $p \Vdash_\mathbb{P} \exists x \phi(x)$, then there is a $y$ in $V$ such that $\phi(y)$.
\end{definition}

Thus, by coding {\it satisfaction} in outer models (without admitting their existence), the Universist can express the content of Absolute-$\mathsf{MA}$ through Absolute-$\mathsf{MA}^{\Vdash}$.\footnote{This will, of course, be equivalent to the standard version of Martin's Axiom. See \cite{Bagaria1997a} for details.} Now, the use of such a forcing relation is very syntactic, and so it is unclear how the Methodological Constraint is satisfied---there is no model very similar to $V$ being extended. However, it is well-known that if one moves to a countable transitive model $\mathfrak{M}$ of the required form, many extensions of $\mathfrak{M}$ are readily available, and some of these models \textit{can} bear a close resemblance to $V$. It is then false (as Hamkins acknowledges) to say that the Universist {\it cannot} provide an interpretation on which the set theorist's statements come out as true. It is simply that, given many of the interpretations, `$V$' does not denote $V$. Koellner expresses the point as follows:

\begin{quote}
Hamkins' aim is to ``legitimize the actual practice of forcing, as it is used by set theorists''... The background is this: Set theorists often use `$V$' instead of `$M$' and so write `$V[G]$'. But if $V$ is the entire universe of sets then $V[G]$ is an ``illusion''. What are we to make of this? Most set theorists would say that it is just an abuse of notation. When one is proving an independence result and one invokes a transitive model $M$ of $\mathbf{ZFC}$ to form $M[G]$ one wants to underscore the fact that $M$ could have been any transitive model of $\mathbf{ZFC}$ and to signal that it is convenient to express the universality using a special symbol. The special symbol chosen is `$V$'. This symbol thus has a dual use in set theory---it is used to denote the universe of sets and (in a given context) it is used as a free-variable to denote any countable transitive model (of the relevant background theory). (\cite{Koellner2013a}, p19)
\end{quote}

Koellner's point here is that the Universist can interpret extension constructions (in particular those concerned with independence), by moving to a countable transitive model (where extensions are uncontroversially available) and conducting the construction there. To transfer the strategy to the current case, we could argue that axioms using extension talk concerning $V$ could be formulated about a countable transitive model $\mathfrak{M}$ satisfying $\mathbf{ZFC}$, and then whatever is proved about $\mathfrak{M}$ merely on the basis of its $\mathbf{ZFC}$-satisfaction can be exported back to $V$.

We defer a detailed consideration of extant interpretations of certain kinds of extension-talk (e.g. forcing) to different work.\footnote{Hamkins himself has several criticisms of these techniques (including the countable transitive model strategy) which the second author addresses in \cite{Barton2019b}. For examination of axioms using countable transitive models elementarily equivalent to $V$ (or some $\mathcal{V}$), see also \cite{ArrigoniFriedman2013a} and \cite{AntosFriedmanHonzikTernullo2015a}.} Nonetheless, a review of the challenges faced and a standard response will be helpful for seeing the main aims and strategy of the current paper. 

We start by noting the following weakness in the countable transitive model strategy (as stated) for the purposes of interpreting axioms concerning $V$ that make mention of extensions: There is no guarantee that $\mathfrak{M}$ and $V$ satisfy the same sentences (though they agree on $\mathbf{ZFC}$). This is of central importance in the current context, part of the point of asserting new axioms for the Universist is to reduce independence concerning $V$, and so we require (according to the Methodological Constraint) as much similarity between $V$ and a countable transitive model as possible.

The situation can be somewhat remedied by having a countable transitive model that satisfies {\it exactly the same} parameter-free first-order sentences as $V$. We will denote such an object by $\mathfrak{V}$. Later (\S6) we will discuss possible reasons why a Universist might accept the existence of such an entity, however for now let us assume that such a $\mathfrak{V}$ exists to see the rough shape of a response to the Hilbertian Challenge. On this picture, when a theorist uses the term `extension of $V$', we interpret them as talking about some $\mathfrak{V}$ (which is elementarily equivalent to $V$) and its relationship to $V$. We have a model, \textit{very similar to $V$} that \textit{really is} being extended. Any result in the first-order language of $\mathbf{ZFC}$ that holds in $\mathfrak{V}$ can then be exported back to $V$ via the elementary equivalence. Moreover, we have an explanation of how talk of forcing over $\mathfrak{V}$ is related to forcing in the context of $V$, both $\mathfrak{V}$ and $V$ will have forcing relations for the relevant formulas and partial orders, it is just that in the case of $\mathfrak{V}$ these relations correspond to the \textit{actual} existence of generics. Use of a countable transitive model elementarily equivalent to $V$ is thus a viable alternative for interpreting extension talk in line with the Hilbertian Challenge and Methodological Constraint, at least as far as parameter-free first-order truth goes. Whilst there are other options for how one might interpret extensions of $V$ (such as via use of a forcing relation or Boolean-valued models, or considering substructures of $V$ obtained by a Boolean ultrapower map),  for now we will simply take it as given that the countable transitive model strategy works well for parameter-free first-order truth:\footnote{These options, and why we might regard the countable transitive model strategy as privileged, are discussed by the second author in \cite{Barton2019b}.} We have a model very similar to $V$ actually being extended and explained how it is linked to truth in $V$. This is the rough form of problem and response we shall articulate in the rest of the paper, but with respect to \textit{arbitrary} extensions.

The core difficulty, to be explained in the next section and overcome in the sections thereafter, is that we might still regard this version of the countable transitive model strategy as somewhat limited. In particular, there are axioms that we can state using higher-order resources \textit{in combination} with extensions, for which such a $\mathfrak{V}$ might not be appropriately linked to truth in $V$. If we want to allow for interpretation of these axioms, we require a modification of the countable transitive model strategy. The next section (\S3) provides an exposition of these axioms, before we show how to modify the countable transitive model strategy to accommodate them (\S\S4,5,6).

\section{Higher-order axioms}

This section provides exposition of several axioms that possibly require more for their satisfactory formulation than what we can guarantee through the existence of a countable transitive model elementarily equivalent to $V$. In particular, as we shall see, the issue concerns the resemblance between the relevant countable model and {\it higher-order} properties.

\subsection{Inner model richness}

Extensions of $V$ are useful for postulating the existence of many inner models. The {\it Inner Model Hypothesis} does exactly this, using extensions of a model $\mathfrak{M}$ in order to make claims about the inner models of $\mathfrak{M}$:

 \begin{definition}
(Informal\footnote{We will show how to code this axiom formally in \S\S4--6.}) \cite{Friedman2006a} $\mathfrak{M}$ (a model of $\mathbf{ZFC}$) satisfies the {\it $\mathfrak{M}$-Inner Model Hypothesis} (henceforth `$\mathfrak{M}$-$\mathsf{IMH}$') iff  whenever $\phi$ is a parameter-free first order sentence that holds in an inner model $I^{\mathfrak{M}^{*}}$ of an outer model $\mathfrak{M}^*$ of $\mathfrak{M}$, there is an inner model $I^\mathfrak{M}$ of $\mathfrak{M}$ that also satisfies $\phi$. The \textit{Inner Model Hypothesis} (henceforth `$\mathsf{IMH}$') is the $V$-Inner Model Hypothesis.
\end{definition}

\begin{remark}
As we shall see (and this is one of the foci of this paper) the content of this claim depends on the interpretation we give to the notion of `outer model of $V$' and `inner model of $V$'. Later, we will show that admitting certain impredicative resources facilitates the coding of the $\mathsf{IMH}$, thus for now we leave it informally formulated. The reader experiencing metaphysical queasiness can, for the moment, think about the $\mathfrak{M}$-$\mathsf{IMH}$ as formulated about an arbitrary countable transitive model $\mathfrak{M}$ of $\mathbf{ZFC}$.
\end{remark}

The $\mathfrak{M}$-$\mathsf{IMH}$ thus states that $\mathfrak{M}$ has many inner models, in the sense that any sentence $\phi$ true in an inner model of an outer model of $\mathfrak{M}$ is already true in an inner model of $\mathfrak{M}$. In this way, $\mathfrak{M}$ has been maximised with respect to {\it internal consistency} (see Figure 1 for visual representation of an application of the $\mathfrak{M}$-$\mathsf{IMH}$).

\begin{figure}
\begin{center}
\caption{A visual representation of an application of the $\mathfrak{M}$-$\mathsf{IMH}$}
\begin{tikzpicture}
\node (a) at (-5,8.2) {$\mathfrak{M}^*$};
\node (b) at (5,8.2) {$\mathfrak{M}^*$};
\node (c) at (-4,8.2) {$I^{\mathfrak{M}^*}$};
\node (d) at (4, 8.2) {$I^{\mathfrak{M}^*}$};
\node (e) at (-3, 7) {$\phi$};
\node (f) at (0, 7) {$\phi$};
\node (g) at (-3, 8.2) {$\mathfrak{M}$};
\node (h) at (3, 8.2) {$\mathfrak{M}$};
\node (i) at (-2, 8.2) {$I^\mathfrak{M}$};
\node (j) at (2, 8.2) {$I^\mathfrak{M}$};
\draw[->, double] (e) to (f);
\draw (0,0) -- (-4,8);
\draw (0,0) -- (4,8);
\draw (0,0) -- (5,8);
\draw (0,0) -- (-5,8);
\draw (0,0) -- (-3,8);
\draw (0,0) -- (3,8);
\draw (0,0) -- (-2,8);
\draw (0,0) -- (2,8);
\end{tikzpicture}
\end{center}
\end{figure}

There are a number of reasons to find the $\mathfrak{M}$-$\mathsf{IMH}$ interesting, not least because it maximises the satisfaction of consistent sentences within structures internal to some $\mathfrak{M}$. If we could formulate the $\mathsf{IMH}$ as about $V$, it would thus be foundationally significant: The $\mathsf{IMH}$ gives us an inner model for any sentence model-theoretically compatible with the initial structure of $V$, and thus serves to ensure the existence of well-founded, proper-class-sized structures in which we can do mathematics. It also has a similarity to Absolute-$\mathsf{MA}$; the $\mathsf{IMH}$ is just more general in that it permits the consideration of \textit{arbitrary} extensions instead of merely set forcing extensions. The $\mathfrak{M}$-$\mathsf{IMH}$ also has substantial large cardinal strength; it implies the existence of inner models (of $\mathfrak{M}$) with measurable cardinals of arbitrarily large Mitchell order, and is consistent relative to the existence of a Woodin cardinal with an inaccessible above.\footnote{See \cite{FriedmanWelchWoodin2008a} for details.} However, it is also interesting in that it has various {\it anti}-large cardinal properties, the $\mathfrak{M}$-$\mathsf{IMH}$ implies that there are no inaccessibles in $\mathfrak{M}$.\footnote{See \cite{Friedman2006a}, p. 597 for details.} 

Whence the problem then for the Universist? If the Universist wishes to use the $\mathsf{IMH}$ as a new axiom about $V$, she has to examine issues concerning extensions of $V$. If they ascribe {\it no} meaning to claims concerning extensions, then the $\mathsf{IMH}$ is utterly trivial (when viewed as a principle true of the universe, rather than some substructure thereof). Under this analysis, everything true in an inner model of an outer model of $V$ is also true in an inner model of $V$, as either (i) the outer model is proper, does not exist, and hence nothing is true in an inner model of that proper outer model of $V$, or (ii) the outer model is $V$ itself, and obviously anything true in an inner model of $V$ is true in an inner model of $V$. Thus, if the Universist ascribes no meaning to the term `outer model of $V$' the inner model hypothesis fails to capture what it was designed to state (i.e. a richness of inner models).

However, even supposing that the Universist allows {\it some} interpretation of extension talk, the content that the $\mathsf{IMH}$ has is going to vary according to the resources one allows. We begin by making the following definitions:

\begin{definition}
($\mathbf{NBG}$) Let $(V, \mathcal{C}^V)$ denote the structure (satisfying $\mathbf{NBG}$) composed of $V$ with all its classes\footnote{We discuss how to interpret classes over $V$ later; see \S5.2.}.  The {\it Set-Generic Inner Model Hypothesis} is the claim that if a (first-order, parameter free) sentence $\phi$ holds in an inner model of a set forcing extension $(V[G], \mathcal{C}^V[G])$ of $(V, \mathcal{C}^V)$ (where $V[G]$ consists of the interpretations of set-names in $V$ using $G$, and $\mathcal{C}^V[G]$ consists of the interpretations of class-names in $\mathcal{C}^V$ using $G$), then $\phi$ holds in an inner model of $V$. If we insist that $\mathcal{C}^V$ consists only of $V$-definable classes, and that the inner models of $V$ and $V[G]$ are definable with set parameters, then we shall call this principle the {\it Definable Set-Generic Inner Model Hypothesis}.
\end{definition}

\begin{definition}
($\mathbf{NBG}$) Again, let $(V, \mathcal{C}^V)$ be the $\mathbf{NBG}$ structure consisting of $V$ with all its classes. The {\it Class-Generic Inner Model Hypothesis} is the claim that if a (first-order, parameter free) sentence $\phi$ holds in an inner model of a tame class forcing extension $(V[G], \mathcal{C}^V[G])$ of $(V, \mathcal{C}^V)$ (where $V[G]$ and $\mathcal{C}^V[G]$ are defined as above), then $\phi$ holds in an inner model of $V$.
\end{definition}

\begin{remark}
Before we begin teasing apart the two possible axioms, we make the following observation regarding the expressibility of these principles.\footnote{We thank an anonymous reviewer for suggesting that we examine this question.} The Definable Set-Generic Inner Model Hypothesis is expressible in $V$ as a scheme of assertions in first-order $\mathbf{ZFC}$, since we can first-order quantify over set forcings, and talk definably about their forcing relations, and using these forcing relations speak about definable inner models of the set-forcing extensions. For the full Set-Generic and Class-Generic Inner Model Hypotheses, we need to use class theory; in the first case to quantify over inner models (that may not be definable), and in the second case to quantify over the forcings in question. Note that for class forcings, one can make the inner model of the outer model definable using Jensen coding, and so the assumption on the definability of the inner models of $V$ and $V[G]$ would result in equivalent principles. Later (in the omniscience subsection of \S6), we will see that given some (non-first order) conditions on the universe, the full $\mathsf{IMH}$ can be expressible in $\mathbf{NBG}$. In any case, the results of \S6 will show that a variant of $\mathbf{MK}$ (in fact a variant of $\mathbf{NBG}$+$\Sigma^1_1$-Comprehension) also suffices to formulate all versions of the $\mathsf{IMH}$. For the time being then, we will restrict to the versions of the $\mathsf{IMH}$ that we can express in $\mathbf{NBG}$, i.e. the Set-Generic Inner Model Hypothesis and the Class-Generic Inner Model Hypothesis.
\end{remark}

We then have the following simple fact for the Set-Generic Inner Model Hypothesis:

 \begin{theorem}
($\mathbf{NBG}$)  If the Set-Generic Inner Model Hypothesis holds then $V\not = L$.
 \end{theorem}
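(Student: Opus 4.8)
The plan is to argue by contradiction: assume both that the Set-Generic Inner Model Hypothesis holds and that $V=L$, and derive a contradiction. The whole strategy rests on a careful choice of witnessing sentence. I would take $\phi$ to be the parameter-free first-order sentence ``$V \neq L$'' (formally, ``not every set is constructible''), and show that it satisfies the hypothesis of the Set-Generic $\mathsf{IMH}$ while its conclusion is incompatible with $V=L$.

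First I would verify the hypothesis, namely that $\phi$ holds in an inner model of some set forcing extension $(V[G], \mathcal{C}^V[G])$ of $(V, \mathcal{C}^V)$. Take $\mathbb{P}$ to be Cohen forcing $\mathrm{Add}(\omega,1)$, a set partial order. It is a standard fact (provable in $\mathbf{ZFC}$, hence available under $V=L$) that a Cohen-generic real is non-constructible, so that $\mathbb{1} \Vdash_{\mathbb{P}} (V \neq L)$; equivalently, in the extension $V[G]=L[G]$ we have $G \in L[G] \setminus L$ while $L^{L[G]}=L$ by absoluteness of the constructibility predicate. Thus $V[G] \models V \neq L$. Since the extension $V[G]$ is itself an (improper) inner model of $V[G]$, the sentence $\phi$ holds in an inner model of a set forcing extension of $V$, as required. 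In the fully rigorous formulation this step is expressed purely via the forcing relation $\Vdash_{\mathbb{P}}$, matching the definable coding of the principle described in the Remark, so no literal generic object need be invoked.

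Applying the Set-Generic Inner Model Hypothesis to this $\phi$ then yields an inner model $I$ of $V$ with $I \models V \neq L$. But under the assumption $V=L$, every inner model of $V$ equals $L$: any inner model is a transitive proper class model of $\mathbf{ZF}$ containing all ordinals, hence contains $L$, and since the universe is $L$ it is also contained in $L$. As $L \models V=L$ by the minimality of $L$, no inner model of $V$ can satisfy $V \neq L$. This contradicts the existence of $I$, completing the argument.

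The main obstacle is not mathematical depth but interpretive care: one must confirm that the framework licenses using the \emph{full} set forcing extension as an inner model of itself (so that the IMH's hypothesis is triggered by the trivial inner model), and that ``$\phi$ holds in a set forcing extension'' is legitimately cashed out through the $V$-definable forcing relation rather than through any committment to the literal existence of $V[G]$. Once these points are settled, the two genuinely load-bearing facts---that nontrivial set forcing forces $V \neq L$, and that $L$ admits no proper inner models and itself satisfies $V=L$---are entirely standard, so I expect the remainder to be routine.
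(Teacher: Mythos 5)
Your proof is correct and follows essentially the same route as the paper's: both use the sentence ``$V \neq L$'', witness it by adding a single Cohen real over $L$ with the forcing extension $L[x]$ itself serving as the inner model of the outer model, and then derive the contradiction from the minimality of $L$ among inner models. Your additional remarks about cashing out satisfaction via the forcing relation are a careful elaboration of what the paper leaves implicit, but the mathematical content is identical.
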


 \begin{proof}
  Assume $V=L$ and the Set-Generic Inner Model Hypothesis. Then there is an inner model of an outer model in which $V=L$ is false (the addition of a single Cohen real $x$ over $L$ to $L[x]$ will suffice, with the relevant inner model simply being the forcing extension $L[x]$). By the Generic Inner Model Hypothesis there is an inner model of $L$ in which $V \not = L$. But $L$ is the smallest inner model, and so $V=L$ and $V \not = L$, $\bot$. 
 \end{proof}

However, though this restricted version of the $\mathsf{IMH}$ is sufficient to get us a certain richness of inner models (enough to break $V=L$) we get more if we allow class forcings. This is brought out in the following:

\begin{theorem}
($\mathbf{NBG}$) Assuming the consistency of the existence of a $V_\kappa$ elementary in $V$, there is a model satisfying the Set-Generic Inner Model Hypothesis that violates the Class-Generic Inner Model Hypothesis.
 \end{theorem}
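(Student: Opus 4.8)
The plan is to build a countable transitive model $\mathfrak{M}$ that realises every set-forcing-reachable sentence in one of its inner models, so that the Set-Generic Inner Model Hypothesis holds, while leaving at least one \emph{class}-forcing-reachable sentence unrealised in every inner model, so that the Class-Generic Inner Model Hypothesis fails. We work with countable transitive models throughout, since only for these do the relevant outer models genuinely exist and the two hypotheses become non-vacuous; the hypothesis that $\mathbf{ZFC} + \exists \kappa\, (V_\kappa \prec V)$ is consistent is used to secure a ctm carrying enough reflection.

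First I would fix a ctm $N \models \mathbf{ZFC} + \exists \kappa\, (V_\kappa \prec V)$, obtained from the consistency assumption, and isolate its witness $V_\kappa$. Passing to a countable elementary submodel and taking the transitive collapse yields a countable transitive ground $\mathfrak{M}_0$ whose reflection properties mirror $V_\kappa \prec V$. Over $\mathfrak{M}_0$ I would then run the standard construction of a model of the Inner Model Hypothesis, \emph{restricted so as only to absorb sentences reachable by set forcing}: one iterates set forcings, interleaving Jensen-style coding to keep the inner models already produced definable and preserved, so that in the final model $\mathfrak{M}$ every sentence holding in an inner model of some set-forcing extension of $\mathfrak{M}$ already holds in an inner model of $\mathfrak{M}$. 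The elementarity $V_\kappa \prec V$ is exactly what guarantees that the collection of set-generically internally consistent sentences is correctly captured below $\kappa$, so that the iteration closes off and $\mathfrak{M} \models$ Set-Generic IMH. This step parallels the known construction of IMH models specialised to set forcing, and I expect it to go through with only bookkeeping changes.

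Next I would exhibit a sentence $\phi_0$ separating the two notions, namely one that holds in an inner model of a tame class-forcing extension of $\mathfrak{M}$ yet holds in no inner model of $\mathfrak{M}$. The natural candidate is a global statement that no \emph{single} set forcing can achieve but a tame (reverse Easton) class forcing can, for instance the global failure of $\mathrm{GCH}$, i.e. ``$2^{\kappa} > \kappa^{+}$ for every infinite cardinal $\kappa$''. A reverse Easton iteration blowing up the powerset at every cardinal is tame and cofinality-preserving, and its extension $\mathfrak{M}[G]$ satisfies $\phi_0$; since $\mathfrak{M}[G]$ is an inner model of itself, the antecedent of the Class-Generic IMH is met. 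To defeat the consequent I must ensure $\mathfrak{M}$ has \emph{no} inner model satisfying $\phi_0$. This is consistent to demand, because $\phi_0$ alters the continuum function at unboundedly many cardinals and so is not reachable by any single set forcing; hence the Set-Generic IMH step does not itself introduce such an inner model, and the canonical inner models (the various $L[a]$ and fine-structural models) satisfy $\mathrm{GCH}$ at cofinally many cardinals.

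I expect the main obstacle to be precisely this simultaneity: verifying that the same $\mathfrak{M}$ can satisfy the Set-Generic IMH and yet possess no inner model realising $\phi_0$. The tension is that the Set-Generic IMH construction actively manufactures many inner models in order to catch every set-forcing-reachable sentence, and one must check that none of these, nor any other (possibly cardinal-collapsing) inner model of $\mathfrak{M}$, accidentally realises the global pattern $\phi_0$. Overcoming this requires controlling the inner-model structure of $\mathfrak{M}$ throughout the iteration, most plausibly by using the coding apparatus to force a $\mathrm{GCH}$-like pattern that every inner model of $\mathfrak{M}$ inherits at unboundedly many cardinals, so that the global failure of $\mathrm{GCH}$ provably fails internally while remaining reachable by the external class forcing.
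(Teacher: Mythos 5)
Your proposal has two genuine gaps, and the decisive one is your separating sentence. Read literally, $\phi_0$ = ``$2^\kappa > \kappa^+$ for every infinite cardinal $\kappa$'' is \emph{not} forced by your reverse Easton iteration. In the extension $\mathfrak{M}[G]$, take any singular $\mu$ that is a closure point of the continuum function (these form a club): $\mu$ is then a strong limit in $\mathfrak{M}[G]$, there is no stage of the iteration at $\mu$, the tail of the iteration is sufficiently closed to add no subsets of $\mu$, and a standard nice-name count over the GCH ground model gives $2^\mu = \mu^+$ there. So $\mathfrak{M}[G] \not\models \phi_0$, the antecedent of the Class-Generic IMH is never engaged, and no violation is produced. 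This is not an accident of your particular forcing: GCH failure at a strong limit singular is a failure of SCH, which by Gitik--Mitchell requires an inner model with a measurable of Mitchell order $\kappa^{++}$ --- far beyond the theorem's hypothesis, which is merely the consistency of $V_\kappa \prec V$. If you retreat to ``GCH fails at every regular cardinal'', the antecedent is fine (the forcing is tame), but then the burden you yourself flag as the main obstacle --- that \emph{no} inner model of $\mathfrak{M}$ satisfies the sentence --- is left without an argument: inner models of $\mathfrak{M}$ need not be canonical or fine-structural, and there is no mechanism by which a forcing performed over $\mathfrak{M}_0$ controls the cardinal arithmetic of \emph{arbitrary} inner models of the resulting model (inner models may compute successor cardinals differently, so ``a GCH-like pattern that every inner model inherits'' is not something one can force).

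The first half of your plan is also far from routine: the known IMH-type constructions need much stronger hypotheses, your ``closing off'' of the iteration via $V_\kappa \prec V$ is only asserted, and interleaving Jensen coding --- a \emph{class} forcing --- into a purportedly set-forcing iteration sits awkwardly with the very property you are trying to arrange. The paper shows no iteration is needed at all: take $\mathfrak{M} \models V=L$ with $L_\kappa \prec L$ (available from your consistency hypothesis by passing to $L$) and force with the single L\'{e}vy collapse $Col(\omega,\kappa)$. The Set-Generic IMH holds in $(L[G], Def^L[G])$ because every set forcing is absorbed into some $Col(\omega,\lambda)$, any two $Col(\omega,\lambda)$-generic extensions satisfy the same sentences, and $L_\kappa \prec L$ pulls the witnessing $\lambda$ below $\kappa$, where collapse generics actually exist inside $L[G]$. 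The separating sentence is then ``there is a real which is not set-generic over $L$'': it holds in a Jensen-coding (tame class-generic) extension, and it fails in every inner model of $L[G]$ for the trivial reason that every real of $L[G]$ --- hence of any of its inner models --- is set-generic over $L$. That choice is robust exactly where yours is fragile: it concerns reals, which inner models can only lose, rather than global cardinal arithmetic, which inner models can recompute.
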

\begin{proof}
Take a model $\mathfrak{M}$ of $V=L$ containing a $V^\mathfrak{M}_\kappa = L^\mathfrak{M}_\kappa \prec V^\mathfrak{M} = L^\mathfrak{M}$. We work from the perspective of $\mathfrak{M}$. For any particular $\beta$, let $Col(\omega, \beta)$ be the L\'{e}vy collapse of $\beta$ to $\omega$, and let $G$ be generic for $Col(\omega, \kappa)$. We claim that $(L[G], Def^L[G])$ satisfies the Set-Generic Inner Model Hypothesis. It suffices to show that if a sentence holds after forcing with $Col(\omega, \lambda)$ for some $\lambda$, then this $\lambda$ can be chosen to be less than $\kappa$. This is because any set-forcing can be absorbed into $Col(\omega, \lambda)$ for some $\lambda$,\footnote{See here \cite{Cummings2010a} (\S14) for technical explanation, and \cite{Barton2019b} for further philosophical remarks. We thank Monroe Eskew for helpful discussion here.} and any two $Col(\omega,\lambda)$-generic extensions satisfy the same sentences.

Now if $\phi$ holds after forcing with $Col(\omega, \lambda)$ for some $\lambda$, then as $L_\kappa$ is elementary in $L$, $\phi$ also holds after forcing over $L_\kappa$ with $Col(\omega, \lambda')$ for some $\lambda' < \kappa$. But if $G(\lambda')$ is generic over $L_\kappa$ for $Col(\omega, \lambda')$, then $L_\kappa[G(\lambda')]$ is elementary in $L[G(\lambda')]$ as $Col(\omega, \lambda')$ is a forcing of size less than $\kappa$, using the fact that $\kappa$ is a cardinal of $L$. So $\phi$ holds after forcing with $Col(\omega, \lambda')$ for some $\lambda' < \kappa$, as desired.

However, $(L[G], Def^L[G])$ does not satisfy the Class-Generic Inner Model Hypothesis because every real in $L[G]$ is set-generic over $L$, and by Jensen coding any model of the Class-Generic Inner Model Hypothesis must have reals which are not set-generic over $L$.
 \end{proof}

Thus, we see how inner model hypotheses are dependent for their content and implications upon what we allow as extensions. Indeed, the difference between set forcing and class forcing is brought out by the following:

\begin{definition}
($\mathbf{ZFC}$) We say that a model $\mathfrak{M}=(M, \in)$ $\kappa$-globally covers $V$ if for every function $f$ (in $V$) with $dom(f) \in M$ and $rng(f) \subseteq M$, there is a function $g \in M$ such that for all $i \in dom(f)$, $f(i) \in g(i)$ and $\mathfrak{M} \models |g(i)| < \kappa$.
\end{definition}

\begin{theorem}
($\mathbf{ZFC}$) \cite{Bukovsky1973a}  Let $\mathcal{V}$ be a transitive model of $\mathbf{ZFC}$, and $\mathfrak{M}$ an inner model of $\mathcal{V}$ definable in $\mathcal{V}$, and $\kappa$ a regular uncountable cardinal in $\mathfrak{M}$. Then $\mathfrak{M}$ $\kappa$-globally covers $\mathcal{V}$ if and only if $\mathcal{V}$ is a $\kappa$-c.c. set-generic extension of $\mathfrak{M}$.\footnote{For further discussion of this theorem, see  \cite{FriedmanFuchinoSakaiFa}.}
\end{theorem}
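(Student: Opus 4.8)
The plan is to establish the two implications separately, with the direction ``$\kappa$-c.c.\ set-generic $\Rightarrow$ $\kappa$-globally covers'' being routine and the converse carrying the real content of Bukovsk\'{y}'s theorem. For the easy direction, suppose $\mathcal{V} = \mathfrak{M}[G]$ where $G$ is $\mathfrak{M}$-generic for a $\kappa$-c.c.\ partial order $\mathbb{P} \in \mathfrak{M}$. Given $f \in \mathcal{V}$ with $dom(f) = a \in \mathfrak{M}$ and $rng(f) \subseteq \mathfrak{M}$, I would fix a $\mathbb{P}$-name $\dot{f} \in \mathfrak{M}$ for $f$ and, working inside $\mathfrak{M}$, define $g$ on $a$ by setting $g(i) = \{\, y : \exists p \in \mathbb{P}\ (p \Vdash_{\mathbb{P}} \dot{f}(\check{i}) = \check{y})\,\}$. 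For fixed $i$, distinct forced values are decided by pairwise-incompatible conditions, so the $\kappa$-c.c.\ (computed in $\mathfrak{M}$) gives $|g(i)|^{\mathfrak{M}} < \kappa$; moreover $g \in \mathfrak{M}$ since it is defined from $\dot{f}$, $\mathbb{P}$ and the definable forcing relation, and $f(i) \in g(i)$ because the true value is forced by some condition of $G$. Hence $\mathfrak{M}$ $\kappa$-globally covers $\mathcal{V}$.

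The converse is the substantive direction. Here the target is to manufacture, inside $\mathfrak{M}$, a $\kappa$-c.c.\ complete Boolean algebra $\mathbb{B}$ together with an $\mathfrak{M}$-generic filter $G$ for which $\mathcal{V} = \mathfrak{M}[G]$. The point of the covering hypothesis is that it renders the structure of $\mathcal{V}$ over $\mathfrak{M}$ both set-sized and ``low-branching'': every $\mathcal{V}$-function $f : a \to \mathfrak{M}$ with $a \in \mathfrak{M}$ lies in an $\mathfrak{M}$-product $\prod_{i \in a} g(i)$ of sets of $\mathfrak{M}$-size $<\kappa$. So $\mathcal{V}$ is obtained, level by level, by selecting branches through $\mathfrak{M}$-definable products of width $<\kappa$, and the definability of $\mathfrak{M}$ in $\mathcal{V}$ guarantees the approximating data can be assembled inside $\mathfrak{M}$. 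The natural template for $\mathbb{B}$ is a Vopenka-style algebra on $\mathcal{P}(\gamma)^{\mathfrak{M}}$ for a generating set $A \subseteq \gamma$ of $\mathcal{V}$; but I would stress that, unlike the $HOD$ case, genericity of such an $A$ over a general inner model $\mathfrak{M}$ is \emph{not} automatic (e.g.\ $0^{\sharp}$ is not generic over $L$, and correspondingly $L$ fails to globally cover $L[0^{\sharp}]$), so the covering property must be used to secure genericity, not merely the chain condition.

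Accordingly, the two key steps I would carry out are: first, use $\kappa$-global covering to show that $\mathcal{V}$ is generated over $\mathfrak{M}$ by a single set of ordinals $A$, i.e.\ that the ``amount of new information'' is bounded rather than proper-class-like, so that some bounded fragment of the $V_\alpha^{\mathcal{V}}$ can be coded into one $A$; and second, build $\mathbb{B} \in \mathfrak{M}$ as the approximation (product-type / Vopenka-type) forcing capturing $A$, deriving the $\kappa$-chain condition from the uniform $<\kappa$ width of the covering products, and deriving genericity from the fact that every $\mathcal{V}$-object is captured inside an $\mathfrak{M}$-product, which is exactly what rules out an $\mathfrak{M}$-definable maximal antichain that the branch coded by $A$ fails to meet. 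Assembling these yields $\mathcal{V} = \mathfrak{M}[A] = \mathfrak{M}[G]$ with $\mathbb{B}$ a $\kappa$-c.c.\ set forcing.

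The hard part will be the simultaneous verification of the $\kappa$-chain condition and of genericity against arbitrary $\mathfrak{M}$-antichains, together with confining everything to a single \emph{set} forcing: the function-level (rather than merely set-of-ordinals-level) strength of the covering hypothesis is precisely what is needed to translate ``every $\mathcal{V}$-function sits in a $<\kappa$-width $\mathfrak{M}$-box'' into a genuine antichain bound in $\mathbb{B}$ and into a density statement forcing $\mathcal{V}$ to decide all of $\mathbb{B}$. I expect the bulk of the effort to lie in making this translation precise and in showing that the covering data glue into one uniform forcing, which is the combinatorial heart of Bukovsk\'{y}'s argument.
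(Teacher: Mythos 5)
The paper does not actually prove this statement: it appears there as a citation of Bukovsk\'{y}'s theorem (with a footnote pointing to \cite{FriedmanFuchinoSakaiFa} for discussion), so your attempt can only be measured against the known argument. Against that standard, your first direction is correct and complete: choosing, for each possible value $y$, a condition with $p \Vdash_{\mathbb{P}} \dot{f}(\check{i}) = \check{y}$ yields an antichain, so the $\kappa$-c.c.\ computed in $\mathfrak{M}$ bounds $|g(i)|^{\mathfrak{M}}$, the true value is forced by a condition in $G$, and $g \in \mathfrak{M}$ by definability of the forcing relation. That half is indeed routine.

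The converse, however, is where the theorem lives, and your outline stops exactly where the proof must begin: both of your ``key steps'' are restatements of what is to be shown, with no mechanism supplied. Concretely: (i) the reduction to a single set of ordinals $A$ with $\mathcal{V} = \mathfrak{M}[A]$ is itself a nontrivial component of the theorem (a priori $\mathcal{V}$ could be a class-length increasing union of set-sized extensions, as in Easton-type situations, which is precisely what covering must exclude), and you assert it rather than prove it; (ii) the engine of the known proof is not a Vop\v{e}nka-style algebra on $\mathcal{P}(\gamma)^{\mathfrak{M}}$ --- for a general inner model the algebra of ordinal-definable sets bears no relation to $\mathfrak{M}$ --- but infinitary propositional logic computed inside $\mathfrak{M}$. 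The translation you defer as ``the hard part'' is: introduce variables $p_\xi$ for $\xi < \mu$ where $A \subseteq \mu$, let $v_A$ be the induced valuation, and for each $\mathfrak{M}$-family $\langle \Gamma_j : j \in b \rangle$ of sets of sentences apply $\kappa$-global covering to the $\mathcal{V}$-function choosing, for each $j$ with $v_A(\bigvee \Gamma_j) = 1$, a true disjunct. The covering function in $\mathfrak{M}$ then yields subsets $\Gamma'_j \subseteq \Gamma_j$ of $\mathfrak{M}$-size $< \kappa$ such that every implication $\bigvee \Gamma_j \rightarrow \bigvee \Gamma'_j$ is true under $v_A$, and --- this is the decisive point --- the set $T$ of all these implications is an element of $\mathfrak{M}$, since it is definable from the covering function. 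The forcing is the Lindenbaum algebra of $T$ in $\mathfrak{M}$'s infinitary calculus, and both of your desiderata fall out of $T$ at once: a maximal antichain $\{\phi_i : i \in I\}$ satisfies $T \vdash \bigvee_{i} \phi_i$, so the corresponding implication in $T$ shrinks it to a $T$-provable disjunction over fewer than $\kappa$ members, which by pairwise incompatibility forces $|I| < \kappa$; and soundness of the calculus together with $v_A \models T$ shows that $G_A = \{ [\phi] : v_A(\phi) = 1 \}$ meets every maximal antichain of $\mathfrak{M}$, giving genericity and $\mathfrak{M}[G_A] = \mathfrak{M}[A]$. Without this construction (or an equivalent), your claim that covering ``rules out an $\mathfrak{M}$-definable maximal antichain that the branch coded by $A$ fails to meet'' is the conclusion to be proved, not an argument for it; so the proposal, while correctly oriented and correct on the easy half, is missing the combinatorial heart of the theorem.
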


The theorem highlights that being a set-forcing extension is a relatively restricted kind of extending construction in comparison to others: If one model is a set forcing extension of another (by some $\kappa$-c.c. forcing), then every function in the extension is already $\kappa$-covered by some function in the ground model. There is no such requirement for class forcing. For example, consider an $x \subseteq \kappa$ that is $\kappa$-Cohen generic over $\mathcal{V}$, and let $f$ be the increasing enumeration of $x$. Then, $f$ is not $\kappa$-covered by a function in $\mathcal{V}$. Thus, the class forcing that adds a single $\kappa$-Cohen at every regular $\kappa$ violates global $\kappa$-covering at every $\kappa$. The question of whether or not there could be an analogue of Bukovsk\'{y}'s Theorem for class forcing is currently unresolved, however any such result would have to transcend the notion of $\kappa$-global covering.

Suppose then that we do wish to assert that the $\mathsf{IMH}$ is true of $V$ for \textit{arbitrary} extensions. Then we need to give meaning (in whatever appropriate codification) to the claim that $V$ has various kinds of extension. The intra-$V$ (i.e. internal to $V$) consequences provable from the $\mathsf{IMH}$ may then vary depending on the kinds of extension we can interpret.

How does this play out with respect to the countable transitive model strategy? Here, there is a {\it prima facie} limitation. As it stands, a $\mathfrak{V}$ elementarily equivalent to $V$ is only accurate for {\it first-order} statements about $V$. Because of the inherent incompleteness in second-order properties over $V$, there is no guarantee that $\mathfrak{V}$ perfectly mirrors $V$'s higher-order properties.  This difficulty is especially interesting given that a large part of set theory comprises examining the structural relationships between models. Simply because an axiom is not first-order is not a reason (without significant further argument) to establish that it is not of independent interest.

The situation can be brought into sharper focus by initially considering the behaviour of arbitrary higher-order properties. Letting $\mathcal{C}^\mathfrak{M}$ denote a collection of classes for a model $\mathfrak{M}$ (i.e. to serve as interpretation of the second-order variables), and $(M, \in, \mathcal{C}^\mathfrak{M})$ be the resulting model of second-order set theory, then it is clear that we can have a $\mathfrak{V}$ elementarily equivalent to $V$ for parameter-free first-order truth, but diverging in the higher-order realm. To see this, consider the following set-sized case. Let some universe $\mathcal{V}$ be of the form $(V_\kappa, \in, V_{\kappa + 1})$, where $\kappa$ is inaccessible. Now use $\mathsf{AC}$ to obtain a countable elementary (in the language of $\mathbf{MK}$) submodel, and collapse to obtain some countable and transitive $\mathfrak{U}=(U, \in, \mathcal{C}^\mathfrak{U})$ (note here that both $U$ and $\mathcal{C}^\mathfrak{U}$ are countable). Letting $Def(M)$ denote the set of classes of a model $\mathfrak{M}$ definable with first-order parameters, we then have that $(U, \in, \mathcal{C}^\mathfrak{U})$ and $(V_\kappa, \in, V_{\kappa + 1})$ satisfy $\mathbf{MK}$ class theory, whereas $(U, \in, Def(U))$ and $(V_\kappa, \in, Def(V_\kappa))$ each satisfy the weaker $\mathbf{NBG}$ class theory, despite all four models having the same first-order theory. This shows that though we might have a $\mathfrak{V}$ elementary equivalent to $V$ for first-order truth, it may radically diverge from $V$ in its higher-order properties, possibly even (dependent on the class theory we accept for $V$, and the classes we choose for $\mathfrak{V}$) having `more' classes (in the sense of sentences of class theory satisfied) than $V$.

 While we see that $\mathfrak{V}$ need not resemble $V$ with respect to arbitrary higher-order truth, there is the question of whether or not it need resemble $V$ with respect to the higher-order truth relevant for satisfaction of the $\mathsf{IMH}$. We will show how to code outer models of $V$ later, in order to give meaning to this question concerning the $\mathsf{IMH}$. However, even in the case of countable transitive models, the question does not have an obvious answer:

 \begin{question}\label{IMH1}
  Could there be a countable transitive model $\mathfrak{M}=(M, \in, C^\mathfrak{M})$ of $\mathbf{NBG}$, such that $\mathfrak{M}$ has a countable (from the perspective of $\mathfrak{M}$) transitive submodel 
  \sloppy $\mathfrak{M}'=(M', \in, C^{\mathfrak{M}'})$, 
  also a model of $\mathbf{NBG}$, with $\mathfrak{M}$ and $\mathfrak{M}'$ agreeing on parameter-free first-order truth in $\mathbf{ZFC}$ but disagreeing on the $\mathsf{IMH}$?
 \end{question}

The mathematics surrounding this question seems difficult, yielding a potentially philosophically and mathematically interesting line of inquiry. For now, we merely point out that it is at least an open (epistemic) possibility that  a $\mathfrak{V}$ equivalent to $V$ for first-order truth might fail to resemble $V$ with respect to the $\mathsf{IMH}$ when we admit some conception of classes for each (assuming that the $\mathsf{IMH}$ can be given a reasonable interpretation over $V$). Later, we will see that if we allow impredicative class comprehension, this difficulty can be circumvented. A small amount of impredicative similarity between $V$ and $\mathfrak{V}$ is sufficient to yield enough resemblance for the satisfaction of the $\mathsf{IMH}$ to covary between $V$ and $\mathfrak{V}$.

\subsection{$\sharp$-generation}

Interestingly, width extensions (i.e. universes containing the same ordinals but more subsets) allow us to encapsulate many large cardinal consequences of reflection properties. It is here that the notion of a {\it sharp} becomes useful. Before we give the definition, we will require a notion of iteration in class theory. We therefore need a preliminary:

\begin{definition}
($\mathbf{NBG}$) Let $\mathsf{ETR}$ (for `{\bf E}lementary {\bf T}ransfinite {\bf R}ecursion') be the statement that every first-order recursive definition along any well-founded binary class relation has a solution.\footnote{For discussions of $\mathsf{ETR}$, see \cite{Fujimoto2012a} and \cite{GitmanHamkins2017a}.}
\end{definition}

We make the following definition (and provide a visual representation in Figure 2):

\begin{definition}
($\mathbf{NBG} + \mathsf{ETR}$) A transitive structure $\mathfrak{N} = (N,U)$ is called a {\it class-iterable sharp with critical point $\kappa$} or just a {\it class-iterable sharp} iff\footnote{This way of defining sharps is modified from the discussion in \cite{Friedman2016a} and \cite{FriedmanHonzik2016a}.}:

\begin{enumerate}[(i)]
\item $\mathfrak{N}$ is a transitive model of $\mathbf{ZFC}^-$ (i.e. $\mathbf{ZFC}$ with the power set axiom removed) in which $\kappa$ is the largest cardinal and is strongly inaccessible.
\item $(N, U)$ is amenable (i.e. $x \cap U \in N$ for any $x \in N$).
\item $U$ is a normal measure on $\kappa$ in $(N, U)$.
\item $\mathfrak{N}$ is iterable in the sense that all successive ultrapower iterations along class well-orders (over the ambient model containing the sharp) starting with $(N, U)$ are well-founded, providing a sequence of structures $(N_i, U_i)$ (for $i$ a set or class well-order) and corresponding $\Sigma_1$-elementary iteration maps $\pi_{i,j}: N_i \longrightarrow N_j$ where $(N, U) = (N_0, U_0)$.
\end{enumerate}
\end{definition}

\begin{remark}
The original work of \cite{Friedman2016a} and \cite{FriedmanHonzik2016a} defined sharps in terms of `all' successive ultrapowers being well-founded in a height potentialist framework (i.e. where any universe can be extended in height). Since we are shooting for axioms on the Universist framework, we will allow the iteration of the ultrapower along any class well-order, and hence make the definitions in $\mathbf{NBG} + \mathsf{ETR}$. In any case, we will see later (\S6) that $\mathsf{ETR}$ can be eliminated in the presence of some impredicative class theory.
\end{remark}

\begin{figure}\caption{A visual representation of the initial ultrapowers corresponding to a sharp $(N,U)$}
 \begin{center}
 \begin{tikzpicture}
  \draw (0,0) -- (-1.5, 3);
  \draw (0,0) -- (1.5,3);
  \node (a) at (0,3) {$\mathfrak{N}=(N, U)$};
  \node (b) at (-1.5,2.3) {$\cdot U$};
  \node (c) at (1.2, 2) {$\kappa$};
  \draw (-1,2) -- (1,2);
  \node (d) at (0,2) {$\bullet$};
  \draw (4,0) -- (2,4);
  \draw (4,0) -- (6,4);
  \node (e) at (4,4.5) {$(N_1 , U_1)$};
  \draw (5.75,3.5) -- (2.25,3.5);
  \node (e) at (4, 3.5) {$\bullet$};
  \node (f) at (1.8,3.8) {$\cdot U_1$};
  \node (g) at (6,3.5) {$\kappa_1$};
  \draw[->] (d) to node[above] {$\pi_{0,1}$} (e);
  \draw[->] (0,1) to node[above] {$\pi_{0,1}$} (4,1);
  \draw[->] (e) to node[above] {$\pi_{1,2}$} (8,5.5);
  \draw[->] (4.2,1) to node[above] {$\pi_{1,2}$} (8,1);
  \node (g) at (8,0) {$\bullet$};
 \node (h) at (8.5,0) {$\bullet$};
 \node (i) at (9,0) {$\bullet$};
 \end{tikzpicture}
 \end{center}
\end{figure}
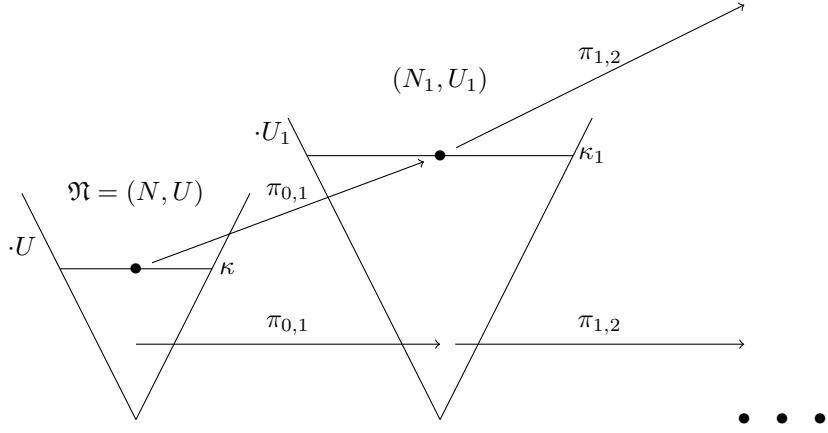

Using the existence of the maps $\pi_{i,j}: N_i \longrightarrow N_j$, we can then provide the following definition:

\begin{definition} ($\mathbf{NBG}+\mathsf{ETR}$) A transitive model $\mathfrak{M} = (M, \in)$ is {\it class iterably sharp generated} iff there is a class-iterable sharp $(N, U)$ and an iteration $N_0 \longrightarrow N_1 \longrightarrow N_2 ...$ such that $M = \bigcup_{\beta \in On^\mathfrak{M}} V^{N_\beta}_{\kappa_\beta}$.
\end{definition}

In other words, a model is class iterably sharp generated iff it arises through collecting together the $V^{N_i}_{\kappa_i}$ (i.e. each level indexed by the largest cardinal of the model with index $i$) resulting from the iteration of a class-iterable sharp through the ordinal height of $\mathfrak{M}$. Note than in producing the model, we only require that the sharp is iterated $Ord^\mathfrak{M}$-many times, despite the fact that it \textit{can} be iterated far further.\footnote{Elsewhere the third author argued that \textit{any} maximal reflection principle will require a formulation external to the model in which it is witnessed, since one can always strengthen any internal reflection principle $\Phi$ with the statement $\Phi$+``There exists an $\alpha$ such that $V_\alpha \models \Phi$''. See \cite{Friedman2016a}, \S4.4.}

A model's being class iterably  sharp generated engenders some pleasant features. In particular, it implies that any satisfaction (possibly with parameters drawn from $\mathfrak{M}$) obtainable in height extensions of $\mathfrak{M}$ adding ordinals (through the well-orders in the class theory of the ambient universe) is already reflected to an initial segment of $\mathfrak{M}$.\footnote{See \cite{Friedman2016a} and \cite{FriedmanHonzik2016a} for discussion.} In this way, we are able to coalesce many reflection principles into a single property of a model. For example a model $\mathfrak{M}$ being class iterably sharp generated already entails reflection from $\mathfrak{M}$ to initial segments of $n$\textsuperscript{th}-order logic for any $n$.\footnote{See here \cite{FriedmanHonzik2016a} and \cite{Friedman2016a}.} One might then suggest the following kind of axiom in attempting to capture reflection properties:

\begin{axiom}
(Informal)\footnote{We will show how to code this axiom formally in \S\S4--6.} {\it The Class Iterable Sharp Axiom.} $V$ is class iterably sharp generated.
\end{axiom}

{\noindent}which would allow us to assert in one fell swoop that $V$ satisfies many reflection axioms (rather than having to assert them in a piecemeal fashion). However, such an axiom is also clearly problematic from the Universist perspective; claiming that $V$ is class iterably sharp generated depends upon the existence of an iterable class sharp for $V$, which cannot be in $V$. If it were, one could obtain a class club resulting from iterating the sharp (namely the class of $\kappa_i$), which in turn forms a club of \textit{regular} $V$-cardinals. The $\omega$\textsuperscript{th} element of any club of ordinals with proper initial segments in $V$ must be {\it singular} with cofinality $\omega$, and so we would obtain a contradiction at $\kappa_\omega$; it would have to be both regular and singular.

Thus, the claim that $V$ is class iterably sharp generated comes out as trivially false; there simply is no such sharp. Moreover, sharps are especially problematic as they cannot be reached by known forcing constructions, so many of our usual simulations of satisfaction for extensions (such as the use of a forcing relation or Boolean-valued model) will not help in their discussion.\footnote{See \cite{Friedman2000a}, \S5.2 for details.} Moreover, whilst the first-order consequences of $\mathfrak{V}$ being class iterably sharp generated will be mirrored in $V$, there is not yet any interpretation of the embeddings yielding the sharp (these being higher-order objects). As we'll see in \S\S 4,5, and 6, we can develop an interpretation of these objects. For now, we look at some other possible uses of extensions.

\subsection{Set-theoretic geology}

A final kind of higher-order axiom that uses extensions emerges from {\it set-theoretic geology}.\footnote{See \cite{FuchsHamkinsReitz2015a} for a description of set-theoretic geology.}  The study of the {\it geological} properties of a model concerns how different models could have arisen by some variety of extension construction (the metaphor of {\it geology} indicates tunnelling {\it down} into the ground models of a particular starting model). In the original paper of \cite{FuchsHamkinsReitz2015a}, the authors are almost exclusively concerned with set-generic forcing extensions. We might examine, however, how this can be extended to {\it arbitrary} extensions. We now repeat some definitions of \cite{FuchsHamkinsReitz2015a} to show how this can be done:

 \begin{definition}
  ($\mathbf{ZFC}$) \cite{FuchsHamkinsReitz2015a} A class $W$ is a {\it ground} of $V$ iff $V$ is obtained by set forcing over $W$, that is if there is some $\mathbb{P} \in W$ and $W$-$\mathbb{P}$-generic filter such that $V = W[G]$.
 \end{definition}

 \begin{definition}
($\mathbf{ZFC}$)  \cite{FuchsHamkinsReitz2015a} A class $W$ is a {\it bedrock} for $V$ iff it is a ground of $V$ and minimal with respect to the forcing-extension relation.
 \end{definition}

\begin{definition}
($\mathbf{ZFC}$) \cite{FuchsHamkinsReitz2015a} The mantle $M$ of a model of set theory is the intersection of all its grounds.
 \end{definition}

\cite{FuchsHamkinsReitz2015a} then goes on to prove several facts about the geological properties models may possess. In particular, the paper shows that many of these statements, which appear second-order (given their reference to structural interrelations of proper class models), can actually be rendered in first-order terms. For example they discuss the following:

 \begin{theorem}
 ($\mathbf{ZFC}$) \cite{Laver2007a} (The Ground Model Definability Theorem). Every ground model $W$ of $V$ is definable in $V$ using a parameter from $W$. Moreover, there is a specific formula $\phi(y,x)$ such that if $W$ is a ground of $V$, then there exists an $r \in W$ such that:

  $$W= \{ x | \phi(r, x) \}$$

 \end{theorem}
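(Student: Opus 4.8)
The plan is to prove definability through the \emph{$\delta$-approximation} and \emph{$\delta$-cover} properties of the pair $W \subseteq V$, the now-standard route to this result. Say that $(W,V)$ has the $\delta$-cover property if every $a \subseteq W$ with $a \in V$ and $|a|^V < \delta$ is contained in some $b \in W$ with $|b|^W < \delta$, and the $\delta$-approximation property if every set of ordinals $A \in V$ all of whose small approximations $A \cap a$ (for $a \in W$, $|a|^W < \delta$) lie in $W$ already satisfies $A \in W$. First I would fix $\delta = (|\mathbb{P}|^+)^W$, which is regular in $W$, and record two preliminary facts about the ground $W$ and the extension $V = W[G]$: that $W$ and $V$ agree on cardinals and cofinalities $\geq \delta$, and that $(W,V)$ enjoys both properties above.

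Establishing these properties for small forcing is the first real step. The cover property is the easy half: since $\delta > |\mathbb{P}|^W$, the forcing is $\delta$-c.c., so any $a \in V$ of size $< \delta$ is captured by a ground-model set of names and hence covered in $W$. The approximation property is the genuinely combinatorial part: given a $\mathbb{P}$-name $\dot A$ for a set of ordinals whose small pieces are all forced into $W$, one argues by a counting/antichain analysis that $\dot A$ is forced to equal a ground-model set; this uses the smallness of $\mathbb{P}$ in an essential way and is where the hypothesis that $V$ is a \emph{set}-forcing extension is actually consumed.

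The heart of the argument is the uniqueness lemma: if $W_0, W_1$ are inner models of $V$ that both have the $\delta$-approximation and $\delta$-cover properties for the same $\delta$, agree with $V$ on cofinalities $\geq \delta$, and satisfy $(\mathcal{P}(\delta))^{W_0} = (\mathcal{P}(\delta))^{W_1}$, then $W_0 = W_1$. I would prove this by showing $W_0 \cap H_\theta = W_1 \cap H_\theta$ for every $\theta$ by induction: the cover property lets one approximate any set of ordinals by pieces of size $< \delta$, each such piece is coded by an element of the common parameter $\mathcal{P}(\delta)$, and so the approximation property forces $W_0$ and $W_1$ to contain exactly the same sets of ordinals, whence (coding arbitrary sets by sets of ordinals) $W_0 = W_1$. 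This shows the ground $W$ is \emph{determined} by the parameter $r = (\mathcal{P}(\delta))^W$.

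It remains to convert ``the unique inner model with parameter $r$ having the approximation and cover properties'' into a genuine first-order formula $\phi(r,x)$ uniform across all grounds. The move is to localize: I would show that $x \in W$ holds iff $x$ belongs to some transitive set $M \models$ (a large finite fragment of) $\mathbf{ZFC}$ with $r \in M$, with $M \models$ ``$r = \mathcal{P}(\delta)$'', and such that $(M, H^V_\theta)$ exhibits the approximation and cover properties at $\delta$ for all sufficiently large $\theta$; by the uniqueness lemma any such $M$ is an initial segment of $W$, so this set-quantifier characterization is correct and $W = \{x \mid \phi(r,x)\}$. I expect the main obstacle to be precisely this last reduction --- replacing the illegitimate quantification over inner (class) models by a reflected quantification over set-sized models, while keeping the defining formula independent of the particular ground --- together with the combinatorial approximation lemma of the second step; the cover property, the cardinal-agreement facts, and the coding of arbitrary sets by sets of ordinals are comparatively routine.
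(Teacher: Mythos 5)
The paper does not prove this theorem; it is stated as a citation of Laver's 2007 result, so there is no internal proof to compare against. Your proposal correctly reconstructs the standard argument --- Hamkins's $\delta$-approximation and $\delta$-cover machinery for small forcing, the uniqueness lemma for inner models with those properties sharing the parameter $\mathcal{P}(\delta)$, and the localization to set-sized transitive models that turns the class-quantifier characterization into a uniform first-order formula $\phi(r,x)$ --- which is precisely the route of Laver's proof (and Woodin's independent one), so it is essentially the same approach as the cited source.
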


This theorem facilitates the study of the geological structure of set-generic multiverses from within a given model in first-order terms. However, we might consider generalisations of the idea of geology to other extensions. \cite{FuchsHamkinsReitz2015a} goes some way towards this, considering the structure present when we allow {\it pseudo-grounds} into the picture: models that have certain covering and approximation properties that facilitate the definability of the ground model in the (possibly class) forcing extension.

Despite this, the ground-model definability theorem can fail badly when we fully admit class forcing extensions into the picture, as shown by the following:

 \begin{theorem}
($\mathbf{MK}$) \cite{Antos2018a} There is a $\mathbf{ZFC}$-preserving class forcing such that the ground model $\mathfrak{M}$ is not definable in the extension $\mathfrak{M}[G]$.
 \end{theorem}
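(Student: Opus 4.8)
The statement should be read against the immediately preceding Ground Model Definability Theorem: for \emph{set} forcing the ground $W$ is definable because a small forcing extension $W[G]$ enjoys, over $W$, the $\delta$-cover and $\delta$-approximation properties, and (by Hamkins' uniqueness lemma) any inner model with the $\delta$-approximation and $\delta$-cover properties is definable from the parameter $\mathcal{P}(\delta)^{W}$. The plan is therefore to build a $\mathbf{ZFC}$-preserving \emph{class} forcing whose ground fails these properties so badly that no first-order formula with parameters can recover it. The conceptual point is that the covering behaviour isolated by Bukovsk\'{y}'s Theorem above is exactly what fails for class forcing, so the machinery behind Laver's result simply has no purchase here.

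First I would fix a countable transitive $\mathfrak{M} \models \mathbf{MK}$ (such models are available in the setting of this paper) and work with a \emph{tame} (pretame) class forcing $\mathbb{P} \in \mathfrak{M}$. Tameness is what secures the forcing theorem and the preservation of Separation and Replacement, hence $\mathfrak{M}[G] \models \mathbf{ZFC}$; this discharges the ``$\mathbf{ZFC}$-preserving'' clause. The forcing $\mathbb{P}$ should add a proper class of new sets while destroying the $\delta$-cover property at a proper class of cardinals $\delta$ (for instance by adding, cofinally in the ordinals, generic objects that cannot be covered by a ground-model set of smaller size, in the manner of the $\kappa$-Cohen example used just after Bukovsk\'{y}'s Theorem). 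I would build $\mathbb{P}$ with a symmetric or product structure --- a class-length product carrying a rich group of $\mathbb{P}$-automorphisms lying in $\mathfrak{M}$ --- so that the single extension $\mathfrak{M}[G]$ can be presented over a proper class of mutually indistinguishable grounds.

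The non-definability argument then runs by contradiction. Suppose $\{x : \mathfrak{M}[G] \models \psi(x,p)\} = \mathfrak{M}$ for some formula $\psi$ and set parameter $p$. The truth value of $\psi(x,p)$ is decided by a single condition in $G$ together with a name for $p$, i.e. by a \emph{set}-sized amount of generic information, all captured below some rank $\alpha$. But precisely because the $\delta$-approximation property fails for every large $\delta$, this bounded data cannot pin down class-much membership in the ground, and the symmetric structure of $\mathbb{P}$ supplies a \emph{distinct} ground $\mathfrak{M}' \neq \mathfrak{M}$ of the same extension agreeing with $\mathfrak{M}$ on everything that $\psi(\cdot,p)$ can detect; hence $\psi(\cdot,p)$ cannot define $\mathfrak{M}$ exactly.

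The hard part will be the universal negative --- ruling out \emph{all} formula--parameter pairs at once rather than a fixed one. This forces the failure of covering and approximation to be genuinely uniform, holding at a proper class of cardinals, so that no choice of rank $\alpha$ for the parameter ever suffices; and it is exactly this uniform, pervasive failure of covering that is in tension with keeping $\mathbb{P}$ tame enough to preserve $\mathbf{ZFC}$. Reconciling these two demands --- pervasive loss of the cover property on the one hand, and tameness together with $\mathbf{ZFC}$-preservation on the other --- is where the real work of the construction lies, and is the step I would expect to occupy the bulk of the proof.
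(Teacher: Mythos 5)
A preliminary remark: the paper contains no proof of this theorem at all; it is stated as an imported result, cited to \cite{Antos2018a}. So your attempt can only be measured against the construction in that cited work, which is a symmetry argument over a class-length product. Your toolkit is broadly the right one --- a tame class-length product with many automorphisms, the observation that a parameter name involves only set-much generic information, and a symmetry-based refutation of a putative defining formula $\psi(x,p)$ --- but there is a genuine gap at your very first step. You ``fix a countable transitive $\mathfrak{M} \models \mathbf{MK}$'' and then set out to build $\mathbb{P} \in \mathfrak{M}$. For an arbitrary $\mathfrak{M}$ this is impossible: if $\mathfrak{M} \models V=L$, then $\mathfrak{M}$ is definable, uniformly and without parameters, in \emph{every} outer model whatsoever, by absoluteness of the $L$-construction; no class forcing over such a ground can witness the theorem. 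Relatedly, automorphisms of $\mathbb{P}$ lying in $\mathfrak{M}$ act on names and fix $\mathfrak{M}$ pointwise, so from $\mathfrak{M}$'s own vantage point there is nothing for your symmetry to move. The essential missing idea is that \emph{the ground itself must be prepared as a generic object over a deeper inner model}: one starts with $L$, forces with (say) an Easton-style product of two copies of $\mathrm{Add}(\lambda,1)$ at each regular $\lambda$, and takes the ground to be $\mathfrak{M} = L[G^0]$ (the left halves), with extension $\mathfrak{M}[G^1] = L[G^0 \times G^1]$. The symmetry argument then runs over $L$, not over $\mathfrak{M}$.

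Your contradiction step is also under-specified in a way that matters. Exhibiting ``a distinct ground $\mathfrak{M}' \neq \mathfrak{M}$ agreeing with $\mathfrak{M}$ on everything $\psi(\cdot,p)$ can detect'' does not refute definability: an extension may have a proper class of grounds while $\psi(\cdot,p)$ still defines exactly one of them. What refutes it is the concrete automorphism computation: fix a condition $q$ in the generic forcing the statement ``$\psi(\cdot,\dot{p})$ defines $L[\dot{G}^0]$''; since $\dot{p}$ is a set-sized name it mentions only set-many coordinates, so choose a regular $\lambda$ beyond those coordinates and beyond $\mathrm{supp}(q)$; the automorphism swapping the two copies at coordinate $\lambda$ fixes $q$ and $\dot{p}$ but interchanges the canonical names $\dot{a}_\lambda, \dot{b}_\lambda$ for the left and right generic subsets of $\lambda$, of which the first is forced into $L[\dot{G}^0]$ and the second forced out (by mutual genericity); hence $q$ forces both $\psi(\dot{a}_\lambda,\dot{p})$ and $\neg\psi(\dot{a}_\lambda,\dot{p})$, a contradiction. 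Note finally that the difficulty you locate at the end --- a tension between pervasive failure of covering and tameness --- is not where the work lies: the two-copy Easton product is tame and $\mathbf{ZFC}$-preserving for standard reasons, and the universal quantifier over pairs $(\psi, p)$ is handled uniformly precisely because every parameter name is confined to set-many coordinates, leaving class-many coordinates free to swap. The real work is in constructing the ground as one symmetric half of a generic over an inner model, so that the symmetry has something to act on.
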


The theorem shows that for many class forcings, the definability of the ground model in the extension can fail. Thus, by insisting that we examine any geological structure only in cases where we {\it do} have definability, we lose a possible route for discovering facts about $V$. Moreover, insisting on the use of set forcing obscures other possible routes of inquiry. For example, instead of looking at forcing extensions, one could examine arbitrary extensions which preserve cardinals. This is a deep and challenging form of set-theoretic geology. Or, one can look at inner models obtained by iterating the $HOD$-operation (i.e. looking at the $HOD$ of the $HOD$, the $HOD$ of the $HOD$ of the $HOD$ etc.).

How might the use of geology be useful in formulating axioms for $V$? A natural thought for a Universist is that $V$ cannot be `reached' by particular kinds of construction. In the case of the set-generic multiverse, we might then postulate that the following is true:

 \begin{definition}
($\mathbf{ZFC}$)  \cite{FuchsHamkinsReitz2015a} $V$ satisfies the {\it ground axiom} iff it is not a non-trivial set-forcing extension of an inner model.
 \end{definition}

In the present case, we might consider the following sort of axiom:

\begin{definition}
(Informal) $V$ satisfies the {\it $\Phi$-ground axiom} iff for extensions of kind $\Phi$ (e.g. set forcing, class forcing, appropriate arbitrary extensions), $V$ is not a non-trivial extension of kind $\Phi$ of an inner model.
 \end{definition}

Axioms of this form would provide a formalisation of the idea that $V$ is `unreachable' in some sense: it cannot be obtained from an inner model by certain kinds of extension. 
As it stands, we do not need to talk about extensions in order to talk about the geological structure of $V$; most of the discussion here concerns how $V$ can be obtained by extending {\it inner} models. However, often geological structure is elucidated by considering how particular grounds (possibly satisfying some $\Phi$-ground axiom) behave within a wider multiverse context. A salient concept here is the following:

 \begin{definition}
 ($\mathbf{ZFC}$) \cite{FuchsHamkinsReitz2015a} The {\it set-generic mantle} is the intersection of all the grounds of set forcing extensions of $V$.\footnote{By the results of \cite{FuchsHamkinsReitz2015a}, this is parametrically definable in the extension.}
 \end{definition}

In the case of set forcing, this class will be definable, owing to the Laver definability of the ground model of an extension. In fact, by recent results of Usuba\footnote{Namely his proof of the {\it Downward Directed Grounds Hypothesis} (that any two grounds have a common ground) and the {\it Set-Downward Directed Grounds Hypothesis} (that a set-sized parameterised family of grounds have a common ground). See \cite{Usuba2017a} for details.}, the mantle (the intersection of all grounds of $V$) and set-generic mantle coincide. However, the loosening of the requirement on set forcing in the definition of the mantle (say to cardinal preserving arbitrary extensions), and the use of this (possibly non-definable) higher-order entity in postulating an axiom to hold of $V$ (for example that the `arbitrary' mantle is a proper subclass of $V$), requires some interpretation of arbitrary outer models of $V$.

\section{The syntactic coding of extensions using $V$-logic}

We thus find ourselves in a tricky situation. We wish to see if we can use extension talk to make claims about $V$, but are at a loss how to do this for certain axioms relating to higher-order properties of $V$. In the rest of the paper, we provide a new method for a Universist to interpret extensions of $V$, and argue that it is philosophically virtuous. First, we provide a sketch of the proposal. 

The central idea will be to use an infinitary proof system ($V$-logic) to code satisfaction in extensions of $V$ syntactically. This has already been mentioned in \cite{AntosFriedmanHonzikTernullo2015a} and \cite{Friedman2016a}, but we provide three important additional contributions. First (\S4) we provide a full and detailed technical account of the mechanisms of $V$-logic, which has not yet appeared. Second (\S5) we will show how $V$-logic relates to impredicative class theories, in particular showing how certain modifications of $V$-logic facilitate a coding in class theory with some impredicative comprehension. In particular, we show how it is possible, given an impredicative class theory, to code consistency in $V$-logic by a single class. While we'll argue that this gives some meaning to the notion of outer model for $V$ (and hence we can provide some interpretation of the axioms we have mentioned) we will also point out that more is needed to satisfy the Methodological Constraint. For our third contribution, we then (\S6) discuss methods for remedying this defect by finding appropriate countable transitive models. For example, we can introduce a predicate for this class and use the resulting theory to reduce our claims to the countable, providing an interpretation that links truth in $V$ to our naive thinking concerning extensions, much as the earlier discussed countable transitive model strategy did for first-order truth. Separately, we will also consider second-order conditions that $V$ might satisfy which yield a {\it first-order} definition of satisfaction in arbitrary outer models.

\subsection{Exposition of $V$-logic}

That infinitary logics relate to talk concerning extensions was known since \cite{Barwise1975a}, and utilised by \cite{AntosFriedmanHonzikTernullo2015a} in providing an interpretation of extension talk in a framework where any universe could be extended to another with more ordinals. We provide a more technically detailed exposition of the system of $V$-logic than has been hitherto provided, and show how it can be captured using impredicative class theories, facilitating a possible line of response on behalf of the Universist. Since we will be showing how to code the logic later, we will leave the theory in which the definitions are formulated for \S\S5--6.

We first set up the language:

 \begin{definition}
  $\mathscr{L}^V_\in$ is the language of $\mathbf{ZFC}$ with the following symbols added:

\begin{enumerate}[(i)]
\item A predicate $\bar{V}$ to denote $V$.
\item A constant $\bar{x}$ for every $x \in V$.
\end{enumerate}
\end{definition}

We can then define $V$-logic:

 \begin{definition}
  {\it $V$-logic} is a system in $\mathscr{L}^V_\in$, with provability relation $\vdash_V$ (defined below) that consists of the following axioms:
 \begin{enumerate}[(i)]
 \item $\bar{x} \in \bar{V}$ for every $x \in V$.
 \item Every atomic or negated atomic sentence of $\mathscr{L}_\in \cup \{\bar{x}| x \in V \}$ true in $V$ is an axiom of $V$-logic.\footnote{An anonymous reviewer points out that one might also wish to add a full first-order satisfaction predicate here. However, the $V$-rule (stated below) will ensure that any first-order sentence true in $V$ is proved to hold in $\bar{V}$ (our symbol for $V$) in $V$-logic. We find it clearer to just put in atomic satisfaction and let the $V$-rule do the rest, but one can also incorporate a satisfaction predicate, if so desired.} 

 \item The usual axioms of first-order logic\footnote{See, for example, \cite{Enderton1972a}, p. 112.} in $\mathscr{L}^V_\in$.
 \end{enumerate}
 
For sentences in $\mathscr{L}^V_\in$, $V$-logic contains the following rules of inference:
 
 \begin{enumerate}[(a)]
  \item {\it Modus ponens}: From $\phi$ and $\phi \rightarrow \psi$ infer $\psi$.
  \item {\it The Set-rule}: For $a, b \in V$, from $\phi(\bar{b})$ for all $b \in a$ infer $\forall x \in \bar{a} \phi(x)$.
  \item {\it The $V$-rule}: From $\phi(\bar{b})$ for all $b \in V$, infer $\forall x \in \bar{V} \phi(x)$.
 \end{enumerate}
 \end{definition}

Proof codes in $V$-logic are thus (possibly infinite) well-founded trees with root the conclusion of the proof. Whenever there is an application of the $V$-rule, we get proper-class-many branches extending from a single node. More formally, we define the notion of a {\it proof code} in $V$-logic (an example of which is visually represented in Figure \ref{vlogicdiagram}) as follows:
 
 \begin{definition}
A {\it proof code of $\chi$ in $V$-logic} is a (possibly infinite) well-founded tree, with root the conclusion of the proof (i.e. $\chi$) and where previous nodes are either codes of axioms of $V$-logic or follow from one of its inference rules. Since we will be proving facts about codes of these proofs later, we provide the following inductive definition:

\begin{enumerate}[(A)]
\item Base cases:
\begin{enumerate}[(i)]
\item For every $x \in V$, the tree that has as its domain the code of $\bar{x} \in \bar{V}$ and as its relation $\emptyset$ is a proof in $V$-logic.
 \item For every atomic or negated atomic sentence $\phi$ of $\mathscr{L}_\in \cup \{\bar{x}| x \in V \}$ true in $V$, the tree that has as its domain the code of $\phi$ and as its relation $\emptyset$ is a proof in $V$-logic.
 \item The trees that have as their domains a single axiom of first-order logic in $\mathscr{L}^V_\in$ and as their relations the empty relation are all proofs in $V$-logic.
 \item If we are proving from some set of premises $\mathbf{T}$, each tree with domain a single sentence of $\mathbf{T}$ and empty relation is a proof in $V$-logic.
 \end{enumerate}
\item Inductive steps:
\begin{enumerate}[(i)]\setcounter{enumi}{3}
\item If $\mathbb{T}_\phi$ and $\mathbb{T}_{\phi \rightarrow \chi}$ are proofs in $V$-logic, then the tree obtained by joining the code of $\chi$ as the root to the two trees $\mathbb{T}_\phi$ and $\mathbb{T}_{\phi \rightarrow \chi}$ is a proof in $V$-logic.
\item If $a$ is a set, and we have a non-empty set of proof trees of the form $\mathbb{T}_{\phi(\bar{b})}$ coding proofs of $\phi(\bar{b})$  for all $b \in a$, then the tree that has as a root the code of $\forall x \in \bar{a} \phi (x)$, and all $T_{\phi(\bar{b})}$ extending from that node is also a proof in $V$-logic.
\item If we have a non-empty class of proof trees of the form $\mathbb{T}_{\phi(\bar{b})}$ coding proofs of $\phi(\bar{b})$  for all $b \in V$, then the tree that has as a root the code of $\forall x \in \bar{V} \phi (x)$ and all $\mathbb{T}_{\phi(\bar{b})}$ extending from that node is also a proof in $V$-logic.
 \end{enumerate}
\end{enumerate}
 \end{definition}

\begin{definition}
For a theory $\mathbf{T}$ and sentence $\phi$ in the language of $V$-logic, we say that $\mathbf{T} \vdash_V \phi$ iff there is a proof code of $\phi$ in $V$-logic from $\mathbf{T}$. 
 We furthermore say that a set of sentences $\mathbf{T}$ is {\it consistent in $V$-logic} iff $\mathbf{T} \vdash_V \phi \wedge \neg \phi$ is false for all formulas of $\mathscr{L}^V_\in$.
\end{definition}
 
 \begin{remark}
 We discuss how these proof codes relate to admissible sets (defined in \S5) and can be coded in the class theory later (\S5). The eagle-eyed reader may notice that we do not require that there is only one proof of $\phi(\bar{b})$ for every $b \in V$ at a particular level of the tree, but that proofs can be repeated for some $b$. This differs from previous presentations of $V$-logic\footnote{See, for example, \cite{AntosFriedmanHonzikTernullo2015a}.} (or $\mathfrak{M}$-logic in the terminology of \cite{Barwise1975a}), and will be essential for some interactions between $Hyp(V)$ and the class-theoretic coding (we define and code $Hyp(V)$ below; one can think of it as a class giving meaning to the term ``the least admissible containing $V$ as an element''). There we are not guaranteed the existence of a well-ordering of $Hyp(V)$ and thus cannot a priori pick `least' proof codes. We regard it as a benign feature of our modified definition of $V$-logic proof codes that they can be applied to more structures without problem.
 \end{remark}
 
 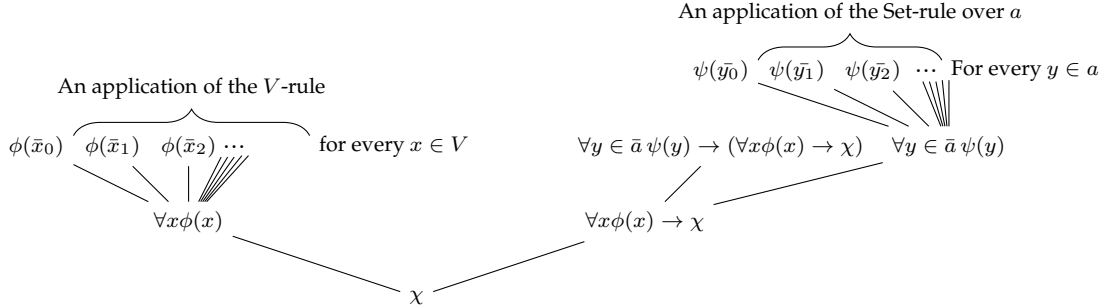
\begin{figure}\caption{Visual representation of a proof of $\chi$ in $V$-logic}\label{vlogicdiagram}
 \vspace{0.1cm}
  \begin{tikzpicture}
  \node (z) at (8,3) {\footnotesize For every $y \in a$};
  \node (y) at (7,3) {};
  \node (x) at (6.9,3) {};
  \node (w) at (6.8,3) {$.$};
  \node (v) at (6.7,3) {$.$};
  \node (u) at (6.6,3) {$.$};
  \node (t) at (6,3) {\footnotesize $\psi (\bar{y_2})$};
  \node (s) at (5,3) {\footnotesize $\psi (\bar{y_1})$};
  \node (r) at (4,3) {\footnotesize $\psi (\bar{y_0})$};
  \node (q) at (7,2) {\footnotesize $\forall y \in \bar{a} \, \psi (y)$};
  \node (p) at (4,2) {\footnotesize $\forall y \in \bar{a} \, \psi (y) \rightarrow (\forall x \phi(x) \rightarrow \chi)$};
  \node (o) at (-0.3,2) {\footnotesize for every $x \in V$};
  \node (k) at (-2.1,2) {};
  \node (j) at (-2.2,2) {};
  \node (i) at (-2.3,2) {$.$};
  \node (h) at (-2.4,2) {$.$};
  \node (g) at (-2.5,2) {$.$};
  \node (f) at (-3,2) {\footnotesize $\phi(\bar{x}_2)$};
  \node (e) at (-4,2) {\footnotesize $\phi(\bar{x}_1)$};
  \node (d) at (-5,2) {\footnotesize $\phi (\bar{x}_0)$};
  \node (c) at (3,1) {\footnotesize $\forall x \phi (x) \rightarrow \chi$};
   \node (b) at (-3,1) {\footnotesize $\forall x \phi (x)$};
    \node (a) at (0,0) {\footnotesize $\chi$};
    \draw (a) -- (c);
  \draw (a) -- (b);
  \draw (b) -- (d);
  \draw (b) -- (e);
  \draw (b) -- (f);
  \draw (b) -- (g);
  \draw (b) -- (h);
  \draw (b) -- (i);
  \draw (b) -- (j);
  \draw (b) -- (k);
  \draw (c) -- (p);
  \draw (c) -- (q);
  \draw (q) -- (r);
  \draw (q) -- (s);
  \draw (q) -- (t);
  \draw (q) -- (u);
  \draw (q) -- (v);
  \draw (q) -- (w);
  \draw (q) -- (x);
  \draw (q) -- (y);
\draw [decorate,decoration={brace,amplitude=10pt,raise=4pt},yshift=0pt]
(d) -- (o) node [black,midway,above, yshift=0.5cm] {\footnotesize An application of the $V$-rule};
\draw [decorate,decoration={brace,amplitude=10pt,raise=4pt},yshift=0pt]
(r) -- (z) node [black,midway,above, yshift=0.5cm] {\footnotesize An application of the Set-rule over $a$};
  \end{tikzpicture}
 \end{figure}

With the mechanisms of $V$-logic set up, we now describe how its use is relevant for interpreting extensions of universes.

\subsection{Interpreting extensions in $V$-logic}

We have provided explanation of a logic that rigidifies the structure of $V$; adding constants and axioms to fix $V$'s properties within the syntax of $V$-logic. How might this allow us to interpret satisfaction in outer models of $V$? As we shall argue, {\it consistency} in $V$-logic of theories in $\mathscr{L}^V_\in$ serves that purpose.\footnote{This idea is also discussed (but only for set-sized models) in \cite{AntosFriedmanHonzikTernullo2015a}, \cite{Friedman2016a}, and \cite{BartonFriedman2017a}. In this paper, we expand these results to the Universist perspective, and show how to interpret $V$-logic in class theory.} For the moment, we will work with $V$-logic as a system in its own right, and show how it can be coded in a manner acceptable to the Universist in \S5.

We first introduce a constant $\bar{W}$ to our language. Letting $\Phi$ be a condition in any particular formal language on universes we wish to simulate in an extension, we then introduce the following `axioms' into our theory of $V$-logic:

\begin{enumerate}[(i)]
 \item {\it $\bar{W}$-Width Axiom.} $\bar{W}$ is a universe satisfying $\mathbf{ZFC}$ with the same ordinals as $\bar{V}$ and containing $\bar{V}$ as a proper subclass.

\item {\it $\bar{W}$-$\Phi$-Width Axiom.} $\bar{W}$ is such that $\Phi$.
\end{enumerate}

We can then have the following axiom to give meaning to the notion of an extension such that $\Phi$, and hence yield intra-$V$ consequences of said extension:

\begin{description}
\item[$\Phi^{\vdash_V}$-Axiom.]The theory in $V$-logic with the $\bar{W}$-Width Axiom and $\bar{W}$-$\Phi$-Width Axiom is consistent under $\vdash_V$.\footnote{Strictly speaking, this will involve a new consequence relation $\vdash_V'$, that includes mention of any axioms involving $\bar{W}$. In fact, \textit{any} collection of additional axioms will result in a new consequence relation involving those axioms. The consequence relation is simply $\vdash_V$ but with any additional axioms added to our original definition of $V$-logic. For clarity we suppress this detail, continue to use  $\vdash_V$ (thereby mildly abusing notation), and show how these relations can be coded formally later.}
\end{description}

We can use this axiom to give meaning to the notion of an intra-$V$ consequence of the axiom mentioning extensions. Any syntactic consequence concerning either some $\bar{x}$ or $\bar{V}$ derived from the axioms mentioning $\bar{W}$ will hold of the respective {\it actual} structures: we simply trace the consequences to the relevant constant. 

To see this in concrete contexts, let us examine $V$-logic in action with respect to some of the examples outlined earlier (we'll show how to code $V$-logic in \S5, but proceed intuitively for now). In the case of a set forcing we could have the following:

\begin{definition}
{\it $\bar{W}$-$G$-Width Axiom.} $\bar{W}$ is such that it contains some $\bar{V}$-$\bar{\mathbb{P}}$-generic $G$.
\end{definition}

If the resulting $V$-logic theory is consistent, then any syntactic consequence of the existence of $\bar{W}$ concerning $\bar{V}$  will then be true in $V$. 

The situation with class forcing is similar, but with a small twist. For, in the case of class forcing using some class poset $\mathbb{P}^C$, the existence of a $V$-$\mathbb{P}^C$ generic $G^C$ is not a {\it first-order} property of $\bar{W}$. Despite this, in $V$-logic we have the ability to add predicates (as we did with $\bar{V}$ and $\bar{W}$). Thus, we can add additional predicates $\bar{\mathbb{P}^C}$ and $\bar{G^C}$ for $\mathbb{P}^C$ and $G^C$ into the usual syntax of $V$-logic, and state the following axiom: 

\begin{definition}
{\it $\bar{W}$-$G^C$-Width Axiom.} $\bar{W}$ is such that $\bar{G^C} \subseteq \bar{W}$ and $\bar{G^C}$ is $\bar{\mathbb{P}^C}$-generic over $V$.
\end{definition}

\noindent and then examine whether the resulting theory is consistent in $V$-logic. Any intra-$V$ consequence of such a (consistent) theory would, for exactly the same reasons as in the case of set forcing, naturally transfer to truths concerning $V$.

This liberation of methods via syntactic means also allows us to formulate axioms that capture {\it non}-forcing extensions. For example:

\begin{definition}
{\it $\bar{W}$-Class-$\sharp$-Width Axiom.} $\bar{W}$ has the same ordinals as $\bar{V}$, satisfies $\mathbf{NBG}+\mathsf{ETR}$, and contains a class sharp that generates $V$.
\end{definition}

\noindent This then allows us to express the claim that $V$ is sharp generated:

\begin{definition}
{\it The Class Iterable Sharp Axiom${}^{\vdash_V}$.} The theory in $V$-logic with the $\bar{W}$-Width Axiom and $\bar{W}$-Class-$\sharp$-Width Axiom is consistent under $\vdash_V$.
\end{definition}

Again, anything provable about $V$ using this theory will be represented by the relevant constants in the theory of $V$-logic, allowing us to give meaning to the claim that $V$ is class iterably sharp generated. Later (\S6) we will see that this corresponds in a neat way to the \textit{actual} existence of sharps over certain (countable) models of set theory.

We noted earlier (\S2) that much extension talk could be interpreted by conducting the construction over a countable transitive model $\mathfrak{V}$ that satisfied exactly the same parameter-free first-order sentences as $V$. It was noted there, however, that the production of such a model provided no guarantee that the model would respect greater than first-order features of $V$, in particular the existence of many inner models provided by the $\mathsf{IMH}$. The key fact here is that now we have the notion of interpreting extensions via consistency in $V$-logic, we are able to simulate statements about the existence of \textit{arbitrary} models and their interrelations.

Again, we add a constant $\bar{W}$ to our language and formulate axioms concerning width extensions represented syntactically by the relevant $\bar{W}$. We can then express the intended content of the $\mathsf{IMH}$ as follows:

\begin{definition}
$\mathsf{IMH}^{\vdash_V}$. Suppose that $\phi$ is a parameter-free first-order sentence. Let $\mathbf{T}$ be a $V$-logic theory containing the $\bar{W}$-Width Axiom and also the $\bar{W}$-$\phi$-Width Axiom (i.e. $\bar{W}$ satisfies $\phi$). Then if $\mathbf{T}$ is consistent under $\vdash_V$, there is an inner model of $V$ satisfying $\phi$.\footnote{The eagle-eyed reader will notice that the formulation in the $\mathsf{IMH}^{\vdash_V}$ is somewhat different from the $\sf IMH$, stating that anything true in an outer model of $V$ is true in an inner model of $V$, rather than anything true in an \textit{inner model of} an outer model is true in $V$. In this way, we have formulated a $V$-logic version of the following principle:

\begin{description}
\item[Definition.] The $\mathsf{IMH}'$ is the claim that if $\phi$ is true in an outer model of $V$, then $\phi$ is true in an inner model of $V$.
\end{description}

One way to rectify this is (as an anonymous reviewer points out) to introduce further predicate letters to stand for the inner model of the outer model. However, it is somewhat simpler (in terms of formulating the $\mathsf{IMH}^{\vdash_V}$) to simply note that:

\begin{description}
\item[Claim.] The $\mathsf{IMH}'$ and the $\mathsf{IMH}$ are equivalent.
\end{description}

\begin{proof}
Clearly the $\sf IMH$ implies the $\mathsf{IMH}'$, since every outer model is also an inner model of an outer model (i.e. the $\mathsf{IMH}'$ is a sub-principle of the $\mathsf{IMH}$). To go in the other direction, we need to show that under the $\mathsf{IMH}'$, if $\phi$ holds in an inner model $W_0$ of an outer model $W_1$ of $V$ then $\phi$ holds in an inner model of $V$. We use Jensen coding to enlarge $W_1$ further to an outer model $W$ in which $W_0$ is a definable inner model (without parameters). We then know that $W_0$ is defined by some formula $\psi$ in $W$. We can then apply the $\mathsf{IMH}'$ to the sentence ``$\psi$ defines an inner model of $\phi$''. This holds in $W$, and therefore by $\mathsf{IMH}'$ there is an inner model of $V$ with $\phi$.
\end{proof}
}
\end{definition}

Thus, by interpreting the existence of outer models through the consistency of theories, we can now make claims concerning consequences (about $V$) of the existence of outer models. In particular, we can say that if $\phi$ is satisfiable in an extension of $V$ (syntactically formulated as $\bar{W}$) then it is satisfied in an inner model of $V$. So, the $\mathsf{IMH}^{\vdash_V}$ holds iff whenever the mathematical structure of $V$ does not preclude the $V$-logic consistency of an outer model satisfying $\phi$, there is an inner model of $V$ satisfying $\phi$. In this way, we make claims concerning greater than first-order properties of $V$ needed to express the $\mathsf{IMH}$ in its maximal sense.

Moreover, we can use this to talk about $V$'s place within a set-theoretically geological structure. Satisfaction in the relevant kinds of extension can be formulated as the consistency of some particular $V$-logic theory or theories, and $V$'s geological properties can thereby be studied. Thus $V$-logic also allows us to talk about $V$ within a higher-order multiverse structure. In fact:

\begin{remark}
We can even interpret satisfaction in non-well-founded extensions of $V$, simply by adding an axiom that states that the extension is non-well-founded and includes $V$ as a standard part.
\end{remark}

Thus, if we allow the use of $V$-logic, we are able to syntactically code satisfaction in arbitrary extensions of $V$ in which $V$ appears standard, and hence the effects of extensions of $V$ on $V$. 

There are two slight wrinkles here, however. First, while we have defined the system, we have yet to show that it can be formulated in a theory acceptable to the Universist. Second, while we have `given meaning' to the idea of an outer model via the use of $V$-logic, we have yet to show that the idea actually does the job we want it to; namely mimicking the existence of extensions. These issues can be brought into sharper focus by contrasting our current situation with the use of forcing relations in interpreting set forcing, possibly in conjunction with a countable transitive elementary submodel $\mathfrak{V}$ of $V$. There, we know (1) We have the resources in $V$ to talk about the relevant forcing relations (via the use of the forcing language), and (2) Whenever $\mathfrak{V}$ has an extension by some $\mathbb{P}$-$\mathfrak{V}$-generic $G$ to $\mathfrak{V}[G] \models \phi$, there will be a corresponding partial order $\mathbb{P}'$ in $V$, forcing relation $\Vdash_{\mathbb{P}'}$, and $p \in \mathbb{P}'$ such that $p \Vdash_{\mathbb{P}'} \phi$. In this way, the forcing relation is \textit{certified} as an acceptable interpretation of forcing over $V$; as it is mirrored in the countable (in line with the Methodological Constraint). In the next two sections we will show that, given an impredicative class theory, we can be in a similar position but with \textit{arbitrary} extensions (and thus adequately formalise the axioms discussed in \S3).

\section{$V$-logic, admissibility, and class theory}

We are now in a position where we have provided a logical system ($V$-logic) that allows us to interpret the axioms we discussed earlier. However, it remains to show that the Universist can legitimately utilise this philosophical system. We proceed as follows: First, we show how we can code a particular \textit{height} extension of $V$ (namely $Hyp(V)$) using impredicative class theory. Of course such a height extension does not really exist for the Universist, rather we are simply talking about what structures can be \textit{coded} using classes. Next, we'll show how our version of $V$-logic can be represented within this structure.

\subsection{Admissibility}

First, we need to explain the structure we will be coding. We first recall the system of {\it Kripke-Platek} set theory $\mathbf{KP}$:

\begin{definition}
 {\it Kripke-Platek Set Theory} (or simply `$\mathbf{KP}$') comprises the following axioms:
 
\begin{enumerate}[(i)]
 \item Extensionality
 \item Union
 \item Pairing
 \item Foundation
 \item $\Delta_0$-Separation: If $\phi$ is a $\Delta_0$ formula in which $b$ does not occur free, then:
\end{enumerate}

$(\forall a) (\exists b) (\forall x) [x \in b \leftrightarrow (x \in a \wedge \phi(x))]$

 \begin{enumerate}[(i)]
 \setcounter{enumi}{5}
 \item $\Delta_0$-Collection: If $\phi$ is a $\Delta_0$ formula in which $b$ does not occur free:
 \end{enumerate}
 
 $(\forall a) [(\forall x \in a) (\exists y) \phi (x, y) \rightarrow (\exists b) (\forall x \in a) (\exists y \in b) \phi (x,y)]$ 
 \end{definition}

We make a further pair of definitions:

 \begin{definition}
 A set $\mathfrak{N}$ is {\it admissible over $\mathfrak{M}$} iff $\mathfrak{N}$ is a transitive model of $\mathbf{KP}$ containing $\mathfrak{M}$ as an element.
 \end{definition}

 \begin{definition}
 $Hyp(\mathfrak{M})$ is the smallest (transitive) $x$ such that $x$ is admissible over $\mathfrak{M}$.
 \end{definition}

Our interest will be in $Hyp(V)$; the least admissible structure containing $V$ as an element. Of course this structure does not really exist. However, we will show that something isomorphic to $Hyp(V)$ can be \textit{coded} in an impredicative class theory.

\subsection{Class theory}

 We now provide a short explanation of the impredicative class theory that we will be using (a variant of $\mathbf{MK}$). Later, we will show that full $\mathbf{MK}$ (with a certain extra axiom on the existence of isomorphisms) is far more than is required. While a philosophical and mathematical analysis of greater than first-order set theory is merited, considerations of space prevent a full examination. A couple of remarks, however, are in order regarding Universism and impredicative class theory.

There are several options available for the Universist in interpreting proper class discourse over $V$. While one method is to simply regard class talk as shorthand for the satisfaction of a first-order formula (a method which yields no impredicative comprehension), this is not the only possibility. We might, for example, interpret the class quantifiers {\it nominalistically} through the use of  {\it plural} resources (as in \cite{Boolos1984a} and \cite{Uzquiano2003a}). Another approach is to interpret classes through some variety of property theory as in \cite{Linnebo2006b} (or with a little massaging to the set-theoretic context \cite{Hale2013a}). Still further, we might interpret the class quantifiers using {\it mereology} (as in \cite{HorstenWelch2016a}).

Key to each is that there is at least the possibility for motivating {\it some} impredicative class comprehension. For example, the plural interpretation is often taken to motivate full impredicative comprehension for classes.\footnote{See \cite{Uzquiano2003a} for discussion.} Many views of property theory suggest that impredicatively defined properties are acceptable.\footnote{Certainly this is the case for \cite{Hale2013a}, though the focus there is not an interpretation of proper classes. \cite{Linnebo2006b} proposes a theory of properties to facilitate semantic theorising concerning set theory, but also licenses some additional higher-order comprehension.} Again, for mereological views the amount of class comprehension licensed will depend on one's views and axioms concerning mereology, however the possibility is open to allow some impredicativity.

For simplicity, we will simply adopt full Morse-Kelly class theory (henceforth `$\mathbf{MK}$'), and discuss how much is required for our purposes later (for most applications, we will require $\Sigma^1_1$-Comprehension for classes). We thus might view the present work as not only providing an interpretation of extension arguments over $V$, but also informing what is possible on various class-theoretic approaches. We start by defining our class theory:

\begin{definition}
 $\mathscr{L}_{\mathbf{MK}}$ has two sorts of variables, one for sets (denoted by lower-case Roman letters $x,y,z, x_0,...$) and one for classes (denoted by upper-case Roman letters $X, Y, Z, X_0,...$), and a single non-logical relation symbol $\in$ that holds between two variables of the first sort or between a variable of the first sort and second sort.\footnote{Really, this is not the same as the set membership relation, however we use the same symbol for ease of expression.} $\mathbf{MK}$ consists of the following axioms:
 \begin{enumerate}[(A)]
  \item Set Axioms:
  \begin{enumerate}[(i)]
  \item Set Extensionality
  \item Pairing
  \item Infinity
  \item Union
  \item Power Set
  \end{enumerate}
  \item Class Axioms:
  \begin{enumerate}[(i)]\setcounter{enumi}{5}
  \item Class Extensionality: $(\forall X)(\forall Y) [(\forall z) ( z \in X \leftrightarrow z \in Y) \rightarrow X = Y]$ (i.e.  Classes with the same members are identical).
   \item Foundation: Every non-empty class has an $\in$-minimal element.
  \item Scheme of Impredicative Class Comprehension: \\
  $(\forall X_1), ..., (\forall X_n) (\exists Y)( Y= \{ x | \phi(x, X_1, ..., X_n) \})$,\\
  where $\phi$ is a formula of $\mathscr{L}_{\mathbf{MK}}$ in which $Y$ does not occur free, may contain both set and class parameters, and in which unrestricted quantification over classes and sets is allowed.
  \item Class Replacement: If $F$ is a (possibly proper-class-sized) function, and $x$ is a set, then $ran(F \upharpoonright x)$ is a set (i.e. $\{ F(y) | y \in x \}$ is a set).
  \item Global Choice: There is a class function $F$ such that for every non-empty $x$
  $\exists y \in x \, F(x) = y$. Equivalently, there is a class that well-orders $V$.\footnote{Note that since Global Choice and Class Replacement imply their set-sized incarnations, we do not need to include Choice and Replacement in our Set Axioms.}
  \end{enumerate}
 \end{enumerate}
\end{definition}

Before we delve into the details, we provide a sketch of our strategy. Some of the work and basic ideas occur in \cite{AntosFriedman2017a} and \cite{Antos2015a}. There, however, additional axioms were required for certain applications\footnote{Namely {\it hyper}class forcing. Antos and Friedman use a version of {\it Class Bounding}, equivalent (modulo $\mathbf{MK}$) to $AC_\infty$; a particular kind of choice principle for classes. It is an interesting question (though one we lack the space to address here) whether or not a Universist should accept that such a principle holds of $V$, not least because the principle is necessary for a good deal of mathematical work (such as second-order ultrapowers). The coding goes through far more easily with the principle, but since we do not need it for the purpose of representing extensions of $V$, we show the coding works without it.} of the coding. We will show that within a variant of $\mathbf{MK}$, $Hyp(V)$ can be coded by a single class. We then provide some arguments as to how the theory of $Hyp(V)$ might be reduced to the countable.

\subsection{Coding $Hyp(V)$ in impredicative class theory}

We first need to code the notion of an {\it ordered pair} of classes. Initially, this seems problematic; normal pairing functions on sets are type-raising in the sense that they have the things they pair as members. We will assume that this is not available on the current interpretation; most Universist interpretations of proper classes do not permit proper classes being members and so constructions that are necessarily type-raising are impermissible. Despite this, we can use the abundance of sets within $V$ (in particular the closure of $V$ under pairing) to code pairs using a standard trick:

 \begin{definition}
 ($\mathbf{MK}$) Let $X$ and $Y$ be classes. We define the class that {\it represents the ordered pair of $X$ and $Y$}, or `$REP(\langle X, Y \rangle)$' as follows:
  
  $REP(\langle X, Y \rangle) = \{ \langle z, i \rangle | (z \in X \wedge i = 1) \vee (z \in Y \wedge i =2) \}$
 \end{definition}

Effectively, we talk about coding an ordered pair of classes by tagging all the members of $X$ with $1$ and all the members of $Y$ with $2$, and referring to the resulting class. Moreover, short reflection on the above coding shows that it is easy to generalise this to $\alpha$-tuples by letting $\alpha$ tag $\alpha$-many copies of $V$. We are now able to make use of the following definition (with visual representation of some examples provided in Figures \ref{treediagram1} and \ref{treediagram2}):

 \begin{definition}
  ($\mathbf{MK}$) \cite{Antos2015a}, \cite{AntosFriedman2017a} A pair $\langle M_0, R \rangle$ is a {\it coding pair} iff $M_0$ is a class with distinguished element $a$, and $R$ is a class binary relation on $M_0$ such that:
  
  \begin{enumerate}[(i)]
   \item $\forall z \in M_0 \exists ! n$ such that $z$ has $R$-distance $n$ from $a$ (i.e. for any element $z$ of $M_0$, $z$ is a single finite $R$-distance away from $a$), and
   \item let $\langle M_0, R\rangle \upharpoonright x$ denote the $R$-transitive closure below $x$. Then if $x, y, z \in M_0$ with $y \not = z$, $yRx$, and $zRx$, then $\langle M_0, R \rangle \upharpoonright y$ is not isomorphic to $\langle M_0, R \rangle \upharpoonright z$, and
   \item if $y, z \in M_0$ have the same $R$-distance from $a$, and $y \not = z$, then for all $v$, $vRy \rightarrow \neg v R z$, and
   \item $R$ is well-founded.
  \end{enumerate}
 \end{definition}

\begin{figure}\caption{An example of a tree corresponding to a coding pair showing membership.}\label{treediagram1}
\setlength{\unitlength}{1cm} 
\vspace{1cm}
\begin{picture}(1,1)
\put(2.9,1.4){$\mathbb{T}_x$}

\put(3,1){\line(0,-1){1}}
\put(3,1){\line(-1,-1){1}}
\put(3,1){\line(1,-1){1}}
\put(3,1){\line(-1,-2){0.5}}
\put(3,1){\line(1,-2){0.5}}
\put(3,0){\line(-1,-2){0.5}}
\put(3,0){\line(1,-2){0.5}}
\put(3,0){\line(0,-1){1}}
\put(3.02,1){$a_{x}$}
\put(3.02,0){$a_y$}
\put(3.02,-1){$a_z$}
\put(3.35,-0.5){$\mathbb{T}_y$}

\put(5,0.5){\footnotesize codes}
\put(7.3,1){$x$}
\put(7.3,0.5){$\in$}
\put(7.3,0){$y$}
\put(7.3,-0.5){$\in$}
\put(7.3,-1){$z$}
\end{picture}
\vspace{1cm}
\end{figure}
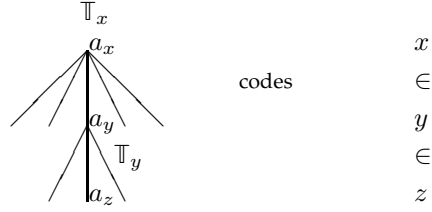

\begin{figure}\caption{More complicated coding pair tree structures.}\label{treediagram2}

 \vspace{1cm}
 \setlength{\unitlength}{1cm} 
\begin{picture}(1,1)
\put(2.9,1.4){$\mathbb{T}_x$}

\put(3,1){\line(0,-1){1}}
\put(3,1){\line(-1,-1){1}}
\put(3,1){\line(1,-1){1}}
\put(3,1){\line(-1,-2){0.5}}
\put(3,1){\line(1,-2){0.5}}
\put(3,0){\line(-1,-1){1}}
\put(3,0){\line(1,-1){1}}
\put(3,0){\line(0,-1){1}}
\put(3,0){\line(-1,-2){0.5}}
\put(3,0){\line(1,-2){0.5}}
\put(2,-1){\line(-1,-2){0.5}}
\put(2,-1){\line(1,-2){0.5}}
\put(4,-1){\line(1,-2){0.5}}
\put(4.5,-2){\line(-1,-2){0.5}}
\put(4.5,-2){\line(1,-2){0.5}}
\put(3.02,1){$a_{x}$}
\put(3.02,0){$a_y$}
\put(1.6,-1){$a_v$}
\put(4.02,-1){$a_w$}
\put(4.52,-2){$a'_v$}
\put(1.8,-2){$\mathbb{T}_v$}
\put(4.35,-3){$\mathbb{T}'_v$}

\put(4.25,0.5){\footnotesize codes}
\put(5.3,1){$x$}
\put(5.3,0.5){$\in$}
\put(5.3,0){$y$}
\put(5.3,-0.5){$\in$}
\put(5,-1){$v$}
\put(5.5,-1){$w$}
\put(5.5,-1.5){$\in$}
\put(5.5,-2){$v$}

\put(8.9,1.4){$\mathbb{T}_x$}

\put(9,1){\line(0,-1){1}}
\put(9,1){\line(-1,-1){1}}
\put(9,1){\line(1,-1){1}}
\put(9,1){\line(-1,-2){0.5}}
\put(9,1){\line(1,-2){0.5}}

\put(9.5,0){\line(-1,-2){0.5}}
\put(9.5,0){\line(1,-2){0.5}}
\put(8,0){\line(-1,-2){0.5}}
\put(8,0){\line(1,-2){0.5}}
\put(7.5,-1){\line(-1,-2){0.5}}
\put(7.5,-1){\line(1,-2){0.5}}
\put(10,-1){\line(-1,-2){0.5}}
\put(10,-1){\line(1,-2){0.5}}
\put(9.02,1){$a_{x}$}
\put(9.52,0){$a_y$}
\put(7.6,-1){$a_v$}
\put(7.6,0){$a_w$}
\put(10.02,-1){$a'_v$}
\put(7.35,-2){$\mathbb{T}_v$}
\put(9.85,-2){$\mathbb{T}'_v$}

\put(10,0.5){\footnotesize codes}
\put(11.3,1){$x$}
\put(11.3,0.5){$\in$}
\put(11.1,0){$w,y$}
\put(11.3,-0.5){$\in$}
\put(11.3,-1){$v$}

\end{picture}

 \vspace{3cm}
 
\end{figure}
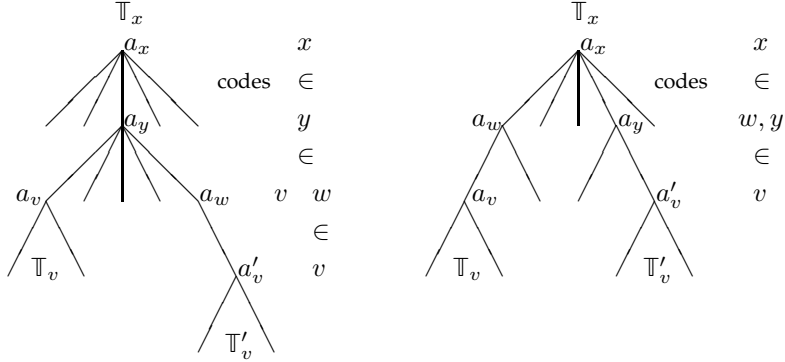

These coding pairs shall be essential in coding the structure of ideal sets that would have to be `above' $V$ were they to exist. One can think of the coding pair as a tree $\mathbb{T}$ which has as its nodes the elements of $M_0$, $a$ as top node, and $R$ the extension relation of $\mathbb{T}$. For each tree there are only countably many levels, but each level can have proper-class-many nodes.

Next, we code the ideal objects using coding pairs. For any particular ideal set $x$, we will code the transitive closure of $\{x\}$. A fact of the above coding is that any tree will have many isomorphic subtrees, and hence will not be isomorphic to $TC(\{x\})$.\footnote{See \cite{Antos2015a} and \cite{AntosFriedman2017a} for details and further explanation. We would like subtrees to correspond to elements in the transitive closure. However, isomorphic subtrees would code the same element, and so as it stands our coding pairs are not extensional. As we'll see below, this can be easily fixed.} We therefore need to form {\it quotient} pairs that provide a coding of ideal sets (we provide a visual representation of an example in Figure \ref{quotientdiagram}).

 \begin{definition}
 ($\mathbf{MK}$)
  \cite{Antos2015a}, \cite{AntosFriedman2017a} {\it Quotient Pairs.} Let $\langle M_0, R \rangle$ be a coding pair and $a$ be a set in $M_0$. We then define the equivalence class of $a$ (denoted by `$[a]$') of all top nodes of the coding trees isomorphic to the subtree $\mathbb{T}_a$:

  \begin{description}
  \item $[a] = \{ b \in M_0 |$``$\langle M_0 , R \rangle \upharpoonright b$ is isomorphic to $\langle M_0, R \rangle \upharpoonright a$''$\}$
  \end{description}

 Since we have Global Choice, we let $\tilde{a}$ be a fixed representative of $[a]$. We then define the {\it quotient pair} $\langle \tilde{M}_0, \tilde{R} \rangle$ as follows:
 
 \begin{description}
 \item $\tilde{M}_0 = \{ \tilde{a} |$ ``$\tilde{a}$ is the representative of the class $[a]$ for all $a \in M_0 \}$
 \end{description}
 \begin{description}
 \item $\tilde{a}\tilde{R}\tilde{b}$ iff ``There is an $a_0 \in [a]$ and a $b_0 \in [b]$ such that $a_0 R b_0$.''
 \end{description}
 \end{definition}

\begin{remark}
 The quotient pairs work by taking fixed representatives of the equivalence class of top nodes of isomorphic subtrees. We then define the relation on these representatives by searching through the equivalence classes to find a relevant subtree in which two members of the equivalence class are $R$-related.
 \end{remark}

\begin{figure}\caption{The quotient process for a coding pair of $3$}\label{quotientdiagram}

\setlength{\unitlength}{1cm} 
\vspace{0.5cm}
\begin{picture}(1,1)

\put(3,0){\line(-1,-1){1}}
\put(3,0){\line(1,-1){1}}
\put(3,0){\line(0,-1){1}}
\put(3,-1){\line(0,-1){1}}
\put(4,-1){\line(-1,-2){0.5}}
\put(4,-1){\line(1,-2){0.5}}
\put(4.5,-2){\line(0,-1){1}}

\put(2.9,0.15){$3$}
\put(2.95,-0.03){$\centerdot$}
\put(1.75,-1){$0$}
\put(1.95,-1.05){$\centerdot$}
\put(2.7,-1){$1$}
\put(2.95,-1.05){$\centerdot$}
\put(2.65,-2){$0'$}
\put(2.95,-2.05){$\centerdot$}
\put(4.1,-1){$2$}
\put(3.95,-1.05){$\centerdot$}
\put(4.6,-2){$1'$}
\put(4.45,-2.05){$\centerdot$}
\put(3.2,-2){$0''$}
\put(3.45,-2.05){$\centerdot$}
\put(4,-3){$0'''$}
\put(4.45,-3){$\centerdot$}

\put(6,-1){$\to$}
\put(9,0){\line(-1,-1){1}}
\put(9,0){\line(1,-3){1}}
\put(9,0){\line(0,-1){2}}
\put(8,-1){\line(1, -1){1}}
\put(9,-2){\line(1,-1){1}}
\put(8,-1){\line(1, -2){0.5}}
\put(8.5,-2){\line(1, -1){0.5}}
\put(9,-2.5){\line(2, -1){1}}

\put(8.9,0.15){$\tilde{3}$}
\put(8.95,-0.04){$\centerdot$}
\put(7.75,-1){$\tilde{2}$}
\put(7.95,-1.05){$\centerdot$}
\put(9.1,-2){$\tilde{1}$}
\put(8.95,-2.05){$\centerdot$}
\put(10.08,-3){$\tilde{0}$}
\put(9.94,-3.02){$\centerdot$}

\end{picture}\\
\vspace{3cm}
\end{figure}
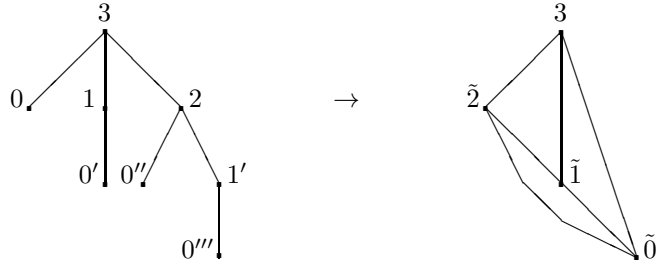

We now have a quotient structure for the coding pairs. Next, we mention some useful properties of these coding pairs for the purposes of showing that they code ideal sets:

 \begin{lemma}\label{codingpairs}
  ($\mathbf{MK}$) \cite{Antos2015a}, \cite{AntosFriedman2017a}\footnote{See \cite{AntosFriedman2017a}, Lemma 2.5.} Let $\langle M_0, R \rangle$ be a coding pair. Then the quotient pair $\langle \tilde{M}_0, \tilde{R} \rangle$ is extensional and well-founded.
 \end{lemma}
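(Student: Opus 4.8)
The plan is to prove the two assertions—well-foundedness and extensionality of $\langle \tilde{M}_0, \tilde{R}\rangle$—separately, after first isolating a rigidity property of coding trees that renders all the relevant objects expressible as classes. The key preliminary observation is that each restricted structure $\langle M_0, R\rangle \upharpoonright a$ is rigid: arguing by well-founded induction on $R$ (clause (iv)), any automorphism must fix the top node, and since clause (ii) guarantees that the distinct immediate $R$-predecessors (children) of a node have pairwise non-isomorphic subtrees, no automorphism can send a child to a different child; by the induction hypothesis it is then the identity on each child's subtree, hence the identity. Consequently, whenever $\langle M_0, R\rangle \upharpoonright c \cong \langle M_0, R\rangle \upharpoonright d$ the witnessing isomorphism is unique. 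This is precisely what will let me assemble global isomorphisms by $\Sigma^1_1$-Comprehension rather than by appeal to a class-choice principle such as $AC_\infty$, which the paper wishes to avoid.

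For well-foundedness, I would proceed as follows. Let $\tilde{A} \subseteq \tilde{M}_0$ be a nonempty subclass and pull it back to $A = \{ z \in M_0 : \tilde{z} \in \tilde{A} \}$, which is a class by impredicative comprehension (the isomorphism relation used to compute $\tilde{z}$ is $\Sigma^1_1$). Since $R$ is well-founded, $A$ has an $R$-minimal element $z_0$, and I claim $\tilde{z_0}$ is $\tilde{R}$-minimal in $\tilde{A}$. If not, some $\tilde{u} \in \tilde{A}$ satisfies $\tilde{u}\,\tilde{R}\,\tilde{z_0}$, so there are $a_0 \in [u]$ and $b_0 \in [z_0]$ with $a_0 R b_0$. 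Transporting along the (unique) isomorphism $\langle M_0, R\rangle \upharpoonright b_0 \cong \langle M_0, R\rangle \upharpoonright z_0$, the child $a_0$ of $b_0$ corresponds to a child $a_0'$ of $z_0$ with $\langle M_0, R\rangle \upharpoonright a_0' \cong \langle M_0, R\rangle \upharpoonright a_0$; hence $\tilde{a_0'} = \tilde{u} \in \tilde{A}$, so $a_0' \in A$, while $a_0' R z_0$ contradicts the $R$-minimality of $z_0$. Thus $\tilde{z_0}$ is minimal and $\tilde{R}$ is well-founded.

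For extensionality, I would first show that the $\tilde{R}$-predecessors of any $\tilde{x}$ are exactly $\{ \tilde{c} : c R x \}$, where $c$ ranges over the children of a fixed representative $x$: one inclusion is immediate by taking $a_0 = c$, $b_0 = x$, and the other follows by the same isomorphism-transfer used above. Now suppose $\tilde{x}$ and $\tilde{y}$ have the same $\tilde{R}$-predecessors. For each child $c$ of $x$, $\tilde{c}$ is then equal to $\tilde{d}$ for some child $d$ of $y$, and by clause (ii) this $d$ is unique (distinct children of $y$ are non-isomorphic); symmetrically one obtains a bijection between the children of $x$ and those of $y$ matching $\cong$-subtrees. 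Invoking rigidity, the unique isomorphisms on the matched subtrees, together with the assignment $x \mapsto y$, have a union that is a genuine class isomorphism $\langle M_0, R\rangle \upharpoonright x \cong \langle M_0, R\rangle \upharpoonright y$; hence $[x] = [y]$ and $\tilde{x} = \tilde{y}$.

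The hard part will be the assembly step in extensionality (and, in miniature, the transfer steps in both parts): a node may have proper-class-many children, so the matching bijection and the combined isomorphism are proper-class-sized objects that must be produced without a class-choice principle. The resolution, as indicated, is the rigidity of coding trees: because each child-to-child isomorphism is unique, the binary relation ``$p$ is sent to $q$ by the unique isomorphism of the matched subtrees'' is definable, and impredicative ($\Sigma^1_1$) comprehension collects it into a single class isomorphism. I would therefore foreground the rigidity lemma, since it is what keeps the whole construction inside the intended fragment of $\mathbf{MK}$ and discharges what would otherwise be a choice obligation.
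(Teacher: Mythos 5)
Your proof is correct, but note first that the paper never proves this lemma in-text: it is imported from the cited works of Antos and Antos--Friedman (the footnote points to Lemma 2.5 of the latter), so the comparison is with the paper's surrounding framework rather than with a written-out proof. Against that framework, your argument does something worth highlighting. The cited development took place in $\mathbf{MK}^*$, which is $\mathbf{MK}$ augmented by Class Bounding (a class-choice principle), and the present paper is explicitly at pains---in the long remark preceding its main coding theorem---to show that the coding can be run without Bounding. Your rigidity observation fits that program exactly: since clause (ii) makes distinct children of a node have non-isomorphic subtrees, a well-founded induction along $R$ (legitimate in $\mathbf{MK}$, as the class of counterexamples exists by impredicative comprehension and clause (iv) gives it a minimal element if nonempty) shows coding trees have no nontrivial automorphisms, hence witnessing isomorphisms are unique. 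This converts the one step that looks like it needs a simultaneous choice of proper-class-many isomorphisms---gluing the child-by-child isomorphisms into a single isomorphism $\langle M_0,R\rangle\upharpoonright x\cong\langle M_0,R\rangle\upharpoonright y$ in the extensionality argument---into a definable class, obtainable by a single instance of $\Sigma^1_1$-comprehension. The well-foundedness half (pulling $\tilde{A}$ back to $A$, taking an $R$-minimal $z_0$, and transferring a putative $\tilde{R}$-predecessor of $\tilde{z_0}$ to an $R$-predecessor of $z_0$ along the isomorphism $\langle M_0,R\rangle\upharpoonright b_0\cong\langle M_0,R\rangle\upharpoonright z_0$) is the standard argument and is carried out correctly, as is the characterisation of the $\tilde{R}$-predecessors of $\tilde{x}$ as $\{\tilde{c} : cRx\}$.

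One detail you should make explicit: for the glued relation in the extensionality step to be a \emph{function}, you need each node of $\mathbb{T}_x$ other than the root to lie in the subtree of exactly one child of $x$. This is where clauses (i) and (iii) of the coding-pair definition enter (parents sit exactly one level up, and distinct same-level nodes have disjoint sets of children, so parents are unique); you use this tacitly but never cite it.
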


Let us take stock. We have quotient structures of coding pairs that behave extensionally and in a well-founded manner, and are coded by individual classes. We are now ready to establish a useful theorem, showing that these quotient coding pairs obey certain operations, and thus we have a code for $Hyp(V)$. We first, however, explain how we will state what we wish to say. 

We will be interested in the theory of these quotient coding pairs, and what codes they can produce. For ease of proof, we will speak as though they are first-order objects, despite the fact that we are using this as an abbreviation for class-theoretic language. We therefore need to make the following:

\begin{remark}
We will talk about `a structure' $(V)^+$, to be understood as the `universe' of all quotient coding pairs, under a `relation' $\widehat{\in}$ that we will show satisfies certain {\it first-order} axioms. We will therefore use locutions like ``$x \widehat{\in} y$'', ``$x \widehat{\in} (V)^+$'', and more generally ``$\phi(x)$'' for some first-order $\phi$. Really, however, since the objects of $(V)^+$ are {\it ideal}, this should all be paraphrased in terms of quotient coding pairs. Though we {\it deny} that we are referring to {\it sets} as normally understood, there is no obstruction to using a first-order language to represent the theory of quotient coding pairs (just as there is no contradiction in using a two-sorted first-order language in our class-theory). We therefore make the following definitions:
\end{remark}

\begin{definition}
($\mathbf{MK}$) Suppose there is a class $X$ coding a quotient pair $\langle \tilde{M}_0, \tilde{R} \rangle$, which in turn codes some $x \widehat{\in} (V)^+$. Since it will often be useful to talk about $X$ coding $\langle \tilde{M}_0, \tilde{R} \rangle$ under their presentation as a tree coding $x$, we say that:

\vspace{0.1cm}

\noindent $\mathbb{T}_x$ is a {\it quotient pair tree for $x$} iff there is a class $X$ representing $\langle \tilde{M}_0, \tilde{R} \rangle$ coding $x$, and $\mathbb{T}_x$ is the tree structure that $X$ exemplifies.
\end{definition}

\begin{definition}
($\mathbf{MK}$) Say there is a class $X$ coding a quotient pair tree $\mathbb{T}_x$. Then $\mathbb{T}_y$ (coded by a class $Y$) is a {\it direct subtree} of $\mathbb{T}_x$ iff $\mathbb{T}_y$ a proper subtree of $\mathbb{T}_x$ and the top node $a_y$ of $\mathbb{T}_y$ is in the level immediately below the top node $a_x$ of $\mathbb{T}_x$. 
\end{definition}

\begin{definition}
($\mathbf{MK}$) Let $X$ and $Y$ code quotient pair trees $\mathbb{T}_x$ and $\mathbb{T}_y$ respectively. Then {\it $X$ bears the $E_T$ relation to $Y$} iff $\mathbb{T}_y$ is isomorphic to a direct subtree of $\mathbb{T}_x$. We shall also write $\mathbb{T}_y E_T \mathbb{T}_x$ to represent this relation between two classes $X$ and $Y$, and will then speak of $y \widehat{\in} x \widehat{\in} (V)^+$.
\end{definition}

\begin{definition}
($\mathbf{MK}$) Let $X$ and $Y$ code quotient pair trees $\mathbb{T}_x$ and $\mathbb{T}_y$ respectively. Then we say $X$ and $Y$ are {\it quotient pair equivalent} (or $\mathbb{T}_x =_T \mathbb{T}_y$) iff $\mathbb{T}_x$ and $\mathbb{T}_y$ are isomorphic.
\end{definition}

\begin{definition}
($\mathbf{MK}$) $(V)^+$ is the structure obtained by taking as our domain of quantification all quotient coding pairs over $V$, our equality relation to be $=_T$, and our membership relation to be $E_T$. We will talk about $(V)^+$ using the following first-order language: A first-order variable $x$ ranges over quotient coding pairs, each variable can be interpreted as a coding pair $\langle \tilde{M}_x, \tilde{R}_x \rangle$, that in turn codes a tree $\mathbb{T}_x$. Membership (denoted by `$\widehat{\in}$') and equality (denoted by `$\widehat{=}$') are interpreted as $E_T$ and $=_T$ respectively.
\end{definition}

We are now in a position to establish a theorem that will prove to be very useful in relating class theory and $V$-logic. Much of the work is putting together results in \cite{Antos2015a} and \cite{AntosFriedman2017a}. However, the following (somewhat lengthy) remark is required to situate the current discussion:

\begin{remark}
In \cite{Antos2015a} and \cite{AntosFriedman2017a}, the first and third author used an axiomatisation $\mathbf{MK}^*$ that also includes the following Class Bounding Axiom:

\begin{definition}

($\mathbf{MK}$) {\it Class Bounding.} $$\forall x \exists A \phi(x,A) \rightarrow \exists B \forall x \exists y \phi (x, (B)_y)$$

\noindent where $(B)_y$ is defined as follows:

$$(B)_y = \{ z | \langle y, z \rangle \in B \}$$

\end{definition}

They also show that, when working over a countable transitive model
\sloppy$\mathfrak{M} = (M, \in, \mathcal{C} )$ 
such that 
\sloppy $\mathfrak{M} \models \mathbf{MK}^*$, 
the model $(M^+, \in)$, where $M^+$ is defined as follows:

$$M^+ = \{ x |\text{``There is a coding pair } \langle M_x , R_x \rangle \text{ for } x \text{ in } \mathcal{C}\text{''}\}$$

\noindent satisfies $\mathbf{SetMK}^*$; a version of $\mathbf{ZFC} -$Power Set with a Set Bounding Axiom and some constraints on the cardinal structure of $M^+$. We indicate how the proofs we require are (a) amenable to the current context, (b) realisable using an impredicative class theory, and (c) can be accomplished avoiding the use of Bounding (but with an assumption on the existence of isomorphisms). For the purposes of the proof, it will be much easier to speak of the tree structures $\mathbb{T}_x$, $\mathbb{T}_y$, and $\mathbb{T}_z$, rather than constantly paraphrasing in terms of classes representing the quotient coding structures.

The proofs in \cite{Antos2015a} and \cite{AntosFriedman2017a} rely on the following two lemmas:

\begin{lemma}
($\mathbf{MK}^*$) \cite{Antos2015a}, \cite{AntosFriedman2017a} {\it First Coding Lemma.} Let $\mathfrak{M} = (M, \mathcal{C})$ be a transitive $\beta$-model (i.e. it computes well-founded relations correctly) of $\mathbf{MK}^*$. Let $\langle N_1 , R_1 \rangle$ and $\langle N_2 , R_2 \rangle$ be coding pairs in $\mathcal{C}$. Then if there is an isomorphism between $\langle N_1 , R_1 \rangle$ and $\langle N_2, R_2 \rangle$ then there is such an isomorphism in $\mathcal{C}$. 
\end{lemma}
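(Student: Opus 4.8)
The plan is to reconstruct the isomorphism \emph{inside} $\mathcal{C}$ by a recursion of length $\omega$, exploiting the fact that a coding pair, although it may branch into proper-class-many nodes at each level, has only countably many levels (every node sits at a unique finite $R$-distance from its top node by clause (i)). The hypothesised isomorphism between $\langle N_1,R_1\rangle$ and $\langle N_2,R_2\rangle$ is used only to \emph{certify} that the required local matchings exist at each stage; the real work is in assembling these local matchings into a single class, and this is where the impredicative comprehension and the Class Bounding axiom of $\mathbf{MK}^*$ do the heavy lifting.

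First I would isolate the ``matching'' relation
\[
 S = \{\langle x,y\rangle : x\in N_1,\ y\in N_2,\ \langle N_1,R_1\rangle\!\upharpoonright\! x \cong \langle N_2,R_2\rangle\!\upharpoonright\! y\}.
\]
The displayed condition is $\Sigma^1_1$ (it asserts the existence of a class coding an isomorphism between the two subtrees), so by the Scheme of Impredicative Class Comprehension $S$ is a class of $\mathcal{C}$. Because $\mathfrak{M}$ is a $\beta$-model, its verdict on whether two well-founded trees are isomorphic agrees with the ambient one (the isomorphism type of a well-founded tree is fixed by a well-founded recursion, hence absolute), so $S$ as computed in $\mathcal{C}$ is the genuine matching relation. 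The key local fact is then: whenever $x\,S\,y$, there is a bijection between the $R_1$-children of $x$ and the $R_2$-children of $y$ carrying each child $x'$ to a child $y'$ with $x'\,S\,y'$. This is obtained by restricting any isomorphism of $\langle N_1,R_1\rangle\!\upharpoonright\! x$ onto $\langle N_2,R_2\rangle\!\upharpoonright\! y$ to the top level, and its existence is exactly what the global hypothesis together with well-foundedness (clause (iv)) guarantees.

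Next I would build $f$ level by level. Set $f$ of the top of $N_1$ equal to the top of $N_2$; the two tops are $S$-related since the whole trees are isomorphic by hypothesis. Given $f$ defined and $S$-respecting on level $n$, for each node $x$ on that level (with value $y=f(x)$) I choose a children-bijection $h_x$ as above and put $f(x')=h_x(x')$ for each child $x'$. Global Choice supplies a well-ordering of $V$, which lets me take each $h_x$ canonically, and Class Bounding lets me collect the proper-class-many $h_x$ at level $n$ into a single class giving $f$ on level $n+1$; the whole $\omega$-recursion is legitimate in $\mathbf{MK}^*$ (indeed it is an instance of recursion well within $\mathsf{ETR}$), and $f=\bigcup_{n}f_n$ is again a class of $\mathcal{C}$. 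That $f$ is an isomorphism is then checked by induction on levels: structure-preservation holds by construction; surjectivity holds because each $h_x$ is \emph{onto} the children of $f(x)$; and injectivity across distinct parents is exactly clause (iii), which forbids distinct same-level nodes of $N_2$ from sharing a child, so the images of children of distinct nodes stay disjoint.

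The main obstacle is the assembly step, not the verification. A single $h_x$ is already a bijection between two \emph{proper classes} of children chosen from proper-class-many admissible matchings, and one needs proper-class-many such $h_x$ simultaneously at each level; marshalling these into one class is precisely what Class Bounding (equivalently $AC_\infty$ over $\mathbf{MK}$) is for, and it is the reason the lemma is stated over $\mathbf{MK}^*$ rather than bare $\mathbf{MK}$. The only other delicate point is the appeal to the $\beta$-model property, needed to guarantee that $S$, and hence the local matchings certified by it, are computed correctly in $\mathcal{C}$ rather than merely in the ambient universe.
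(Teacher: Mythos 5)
There is a genuine gap, and it sits exactly where all the content of the lemma lies: in your first step. The instance of Impredicative Comprehension you invoke is interpreted \emph{inside} $\mathfrak{M}$, so the class $S$ it produces relates $x$ to $y$ precisely when there is an isomorphism of the two subtrees \emph{in $\mathcal{C}$}. Your claim that ``$S$ as computed in $\mathcal{C}$ is the genuine matching relation'' is therefore not an auxiliary absoluteness fact you may appeal to: it \emph{is} the First Coding Lemma, applied to subtrees --- and applied to the two top nodes (whose subtrees are the full trees) it already yields the conclusion in one line, which is a sign that everything after it is redundant and everything before it is unproven. The justification offered (``the isomorphism type of a well-founded tree is fixed by a well-founded recursion, hence absolute'') does not go through for proper-class-sized trees: no Mostowski collapse is available inside $\mathfrak{M}$ (coding pairs exist precisely to represent objects beyond $M$), and the $\beta$-model property gives correctness about \emph{well-foundedness} of class relations, not downward correctness for $\Sigma^1_1$ assertions such as ``there exists a class isomorphism''. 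So the argument is circular at its first step. A secondary error: Global Choice cannot ``take each $h_x$ canonically'' --- it well-orders $V$, not $\mathcal{C}$, and there is no well-ordering of the classes from which to pick isomorphisms; fortunately clause (ii) makes the $S$-respecting children-matching \emph{unique}, so no such choice is ever needed, and your worry about ``proper-class-many admissible matchings'' is misplaced.

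The repair runs the induction along the well-founded relation $R_1$ (bottom-up), not through the finitely many levels (top-down). Given the ambient isomorphism $F$, show by induction along $R_1$ --- legitimate in the ambient universe precisely because $\beta$-ness guarantees $R_1$ is \emph{really} well-founded; this is the correct role of the $\beta$-model hypothesis --- that for every node $x$ there is an isomorphism in $\mathcal{C}$ between $\langle N_1,R_1\rangle\!\upharpoonright\! x$ and $\langle N_2,R_2\rangle\!\upharpoonright\! F(x)$. In the inductive step one glues the children's $\mathcal{C}$-isomorphisms: clause (ii) forces the matching of children to agree with $F$, clause (iii) keeps the glued map single-valued and injective, and Class Bounding is what collects the class-many isomorphisms supplied by the inductive hypothesis into a single class from whose slices the glued isomorphism is definable --- this is the real reason the lemma lives in $\mathbf{MK}^*$. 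Applying the conclusion at the top node finishes the proof. (Alternatively, your absoluteness intuition can be made rigorous by replacing ``there exists an isomorphism'' with a bisimulation relation defined by recursion along the product relation; solutions of such recursions are unique, hence absolute between $\mathfrak{M}$ and the ambient universe once well-foundedness is absolute, and rigidity turns bisimilarity into isomorphism --- but again the recursion runs along $R_1$, not along levels.) Note finally that the paper itself never proves this statement: it cites the Antos and Antos--Friedman work for the countable-transitive-model case and, in its own setting over $V$, explicitly \emph{adds} the lemma as an axiom to form $\mathbf{MK}^+$, so the comparison here is with the cited proof rather than with anything in the text.
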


\begin{lemma}
($\mathbf{MK}^*$) \cite{Antos2015a}, \cite{AntosFriedman2017a} {\it Second Coding Lemma.} For all $x \in M^{+}$ there is a one-to-one function $f \in M^+$ such that $f: x \longrightarrow M_x$, where $\langle M_x, R_x \rangle$ is a coding pair for $x$.
\end{lemma}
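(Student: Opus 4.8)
The plan is to exhibit $f$ concretely as the function that reads the ``membership layer'' off the coding tree for $x$, and then to show that this $f$ is itself coded by a class in $\mathcal{C}$. First I would use Lemma~\ref{codingpairs} to replace an arbitrary coding pair for $x$ by its quotient pair $\langle \tilde{M}_x, \tilde{R}_x \rangle$, so that the tree $\mathbb{T}_x$ is extensional and well-founded; write $a_x$ for its top node. By coding-pair condition (ii), distinct direct children of $a_x$ have non-isomorphic subtrees, while by the definition of $E_T$ every $y$ with $y \widehat{\in} x$ is coded by some direct subtree of $\mathbb{T}_x$. Hence the direct children of $a_x$ are in bijection with the elements of $x$. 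I would then define $f$ to send each $y \widehat{\in} x$ to the child node $c \in \tilde{M}_x$ coding it. Since each node $c \in \tilde{M}_x \subseteq M$ is a genuine set of $\mathfrak{M}$ with a canonical representative in $M^+$, this $f$ is a total, one-to-one map $f : x \longrightarrow M_x$, as required.

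The real content is the assertion $f \in M^+$, i.e. that the class of ordered pairs $\{ \langle y, c \rangle : y \widehat{\in} x,\ c = f(y) \}$ admits a coding pair in $\mathcal{C}$. I would construct this by assembling component trees. Each $y$ lies in $x \in M^+$ and so has a coding tree in $\mathcal{C}$; each value $c = f(y)$ is a genuine element of $M$ and so has its canonical $\in$-tree of $TC(\{c\})$. From these I would build, for each $y$, a coding tree for the Kuratowski pair $\langle y, c \rangle = \{ \{y\}, \{y,c\} \}$, and then join all of these pair-trees beneath a single fresh root $a_f$ to obtain $\mathbb{T}_f$. Impredicative Class Comprehension then lets me collect the nodes of $\mathbb{T}_f$ into one class $\langle M_f, R_f \rangle$ in a single step; verifying coding-pair conditions (i)--(iv) and quotienting yields the code for $f$.

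The hard part will be the uniform assembly, and this is exactly where the ambient class theory is needed. To glue (possibly proper-class-many) component trees coherently one must first fix, uniformly in $y$, a single coding tree for each $y \widehat{\in} x$: in $\mathbf{MK}^*$ this is supplied by the Class Bounding Axiom (which is why \cite{Antos2015a} and \cite{AntosFriedman2017a} assume it), whereas in the Bounding-free variant one instead appeals to the standing assumption on the existence of isomorphisms to select canonical components. Once the components are pinned down, the First Coding Lemma ensures that the isomorphisms required to check condition (ii) and the extensionality of the quotient all lie in $\mathcal{C}$, so the coding-pair conditions are genuinely verifiable there; well-foundedness (condition (iv)) is inherited from the components together with the finite Kuratowski superstructure, and is computed correctly because $\mathfrak{M}$ is a $\beta$-model. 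The remaining checks---that distinct pairs give non-isomorphic subtrees (immediate from injectivity of $f$ and extensionality) and that the level structure of $\mathbb{T}_f$ is as prescribed---are routine. This produces a coding pair for $f$ in $\mathcal{C}$, so $f \in M^+$ and the lemma follows.
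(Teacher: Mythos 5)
Your proposal cannot be measured against the paper's own argument, because the paper contains none: the Second Coding Lemma is quoted from \cite{Antos2015a} and \cite{AntosFriedman2017a}, and the surrounding remark explicitly says (item (III)) that ``use of the Second Coding Lemma is circumnavigated by the proofs below, and so we will say nothing more about it.'' So the authors neither prove it nor rely on it; your proof is necessarily an independent reconstruction, and on its merits the core of it is the natural argument and is sound in outline. The children of the top node $a_x$ of a coding pair for $x$ code the elements of $x$, condition (ii) makes the assignment child-to-element injective, so sending each $y \widehat{\in} x$ to the child whose subtree codes it gives a total one-to-one map into $M_x$; a code for $f$ itself is then assembled by hanging Kuratowski-pair trees beneath a fresh root, with well-foundedness inherited from the components and correctness of the isomorphism facts guaranteed in the $\beta$-model setting by the First Coding Lemma.

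Two points in your write-up do need repair, though neither is fatal. First, the quotient pair $\langle \tilde{M}_x, \tilde{R}_x \rangle$ is \emph{not} itself a coding pair: after identifying isomorphic subtrees a node can have several $\tilde{R}$-parents and lie at several distances from the top (see the quotient of the tree for $3$ in the paper's Figure 5), so conditions (i) and (iii) fail for it, and you should not invoke ``coding-pair condition (ii)'' of the quotient. Work instead with the original coding pair, where condition (ii) already yields that distinct children of $a_x$ code distinct elements, and note that the representatives chosen by Global Choice lie in $M_x$ anyway. Second, your diagnosis of where Class Bounding enters is misplaced: no selection principle is needed to ``fix, uniformly in $y$, a single coding tree for each $y \widehat{\in} x$,'' because the direct subtrees of $\mathbb{T}_x$ themselves constitute a canonical, uniformly definable family of codes for the elements of $x$. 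Consequently the entire tree $\mathbb{T}_f$ (pair-trees built from the subtree below $c$ together with the canonical unravelling tree of the actual set $c$) is definable from the class parameters $M_x$ and $R_x$ by a single instance of Class Comprehension, which is exactly in the spirit of the paper's stated aim of avoiding Bounding at the cost of assuming the First Coding Lemma. With those two adjustments your construction goes through.
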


\cite{Antos2015a} and \cite{AntosFriedman2017a} are concerned with higher-order forcing (i.e. using forcing posets that have some {\it proper classes} of a model as conditions) over models of $\mathbf{MK}^*$. In order to deal with the obvious metamathematical difficulties, they {\it explicitly} define the construction over models satisfying $\mathbf{MK}^*$ that are {\it countable}, {\it transitive}, and are $\beta$-models. Their strategy is to code a model $(\mathfrak{M})^+$ of $\mathbf{SetMK}^*$ in the original model of $\mathbf{MK}^*$, and perform a definable class forcing over $(\mathfrak{M})^+$. This then corresponds to a hyperclass forcing over $\mathfrak{M}$. The current work shows that the coding outlined is not dependent upon the countability of the models, nor the extra assumption of Class Bounding (though we will motivate the acceptance of the First Coding Lemma). Instead, we can take the coding over $V$ using the interpretation of $\mathbf{MK}$ through class theory, and show how to code the theory of $(V)^+$ using these resources. For the above lemmas then, a few remarks are in order.

(I) The assumption that the model over which we code is a $\beta$-model is philosophically trivial in the present setting; we are working over the Universist's $V$, with some conception of its classes $\mathcal{C}^V$. For the Universist, $(V, \mathcal{C}^V)$ {\it sets the standard} for what a $\beta$-model is, and so is trivially a $\beta$-model.

(II) Similar remarks apply to the First Coding Lemma, which is a non-trivial result when we are concerned with a countable transitive model $\mathfrak{M} \models \mathbf{MK}^*$. Since $\mathfrak{M}$ has an impoverished view of what classes there are, in that setting one needed to establish that $\mathbf{MK}^*$ satisfaction {\it alone} ensures that there is a class of the relevant kind in $\mathcal{C}$. For the purposes at hand, however, the result is again philosophically trivial; the relevant classes $\mathcal{C}^V$ over $V$ {\it set the standard} for when two trees representing classes are isomorphic, and so we {\it cannot} have isomorphic trees $\mathbb{T}_x$ and $\mathbb{T}_y$ for which there is not a class coding an isomorphism. If there are no things coding an isomorphism between $\mathbb{T}_x$ and $\mathbb{T}_y$ then they are simply {\it not} isomorphic. To be absolutely technically explicit about this fact, we will {\it add} the First Coding Lemma as an extra axiom, and denote the theory we work in by $\mathbf{MK}^+$.

(III) Use of the Second Coding Lemma is circumnavigated by the proofs below, and so we will say nothing more about it.
\end{remark}

 \begin{theorem}\label{maintheorem}
 ($\mathbf{MK}^+$) $(V)^+$ satisfies Infinity, Extensionality, Foundation, Pairing, Union, and $\Delta_n$-Separation for every $n$. Rendered in the class theory, this states:
 
 \begin{enumerate}[(1.)]
  \item {\it Infinity}${}^+$: There is a class representing a tree $\mathbb{T}_\omega$ for $\omega$.
  \item {\it Transitivity}${}^+$: For any class $X$ representing a quotient coding pair tree $\mathbb{T}_x$, and for any class $Y$ representing a quotient coding pair tree $\mathbb{T}_y E_T \mathbb{T}_x$, if $a_z$ is a node directly below $a_y$ in $\mathbb{T}_y$, then there is a class $Z$ coding a direct subtree $\mathbb{T}_z$ of $\mathbb{T}_y$.
  \item {\it Pairing}${}^+$: For any class $X$ coding a tree $\mathbb{T}_x$ and any class $Y$ coding a tree $\mathbb{T}_y$, there is a class $Z$ coding a tree $\mathbb{T}_z$ such that $\mathbb{T}_x E_T \mathbb{T}_z$ and $\mathbb{T}_y E_T \mathbb{T}_z$.
  \item {\it Union}${}^+$: Suppose that there is a class $X$ coding a quotient pair tree $\mathbb{T}_x$. Then there is a class $U_{\cup x}$ coding a quotient pair tree $\mathbb{T}_{\cup x}$, such that for every class $Y$ coding a quotient pair tree $\mathbb{T}_y$ with $\mathbb{T}_y E_T \mathbb{T}_x$, if we have classes coding trees $\mathbb{T}_{z_i} E_T \mathbb{T}_y$, then for each $\mathbb{T}_{z_i}$ we have $\mathbb{T}_{z_i} E_T \mathbb{T}_{\cup x}$.
  \item {\it $\Delta_n$-Separation}${}^+$: Suppose that there is a class $X$ coding a quotient pair tree $\mathbb{T}_x$, in turn coding some $x \widehat{\in} (V)^+$. Let $\phi(y)$ be a  $\Delta_n$ formula in the language of $(V)^+$. Then there is a class $Z$ coding a tree $\mathbb{T}_z$ such that for any class $Y$ coding a tree $\mathbb{T}_y$ (with $y \widehat{\in} (V)^+$) such that $\mathbb{T}_y E_T \mathbb{T}_x$, $\phi(y)$ holds in the theory of $(V)^+$ iff $\mathbb{T}_y E_T \mathbb{T}_z$.
 \end{enumerate}
 \end{theorem}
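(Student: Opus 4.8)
The plan is to split the six assertions into those that follow almost immediately from what has already been established and those that require an explicit construction. \emph{Extensionality}${}^+$ and \emph{Foundation}${}^+$ come essentially for free: by Lemma~\ref{codingpairs} every quotient pair $\langle \tilde{M}_0, \tilde{R} \rangle$ is extensional and well-founded, and since $=_T$ and $E_T$ are defined through isomorphism of (sub)trees, the First Coding Lemma (now an axiom of $\mathbf{MK}^+$) guarantees that trees which ought to be identified really are joined by a class isomorphism, so that $=_T$ and $E_T$ behave as a genuine equality and membership on $(V)^+$. The remaining assertions are all proved by exhibiting a witnessing class: for each I describe the tree the witness should code, invoke impredicative Class Comprehension to form the corresponding class, check conditions (i)--(iv) of the definition of a coding pair, pass to the quotient to restore extensionality, and finally verify the advertised $E_T$-relationships.

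For the four ``constructive'' axioms the constructions are direct. For \emph{Infinity}${}^+$ I would build $\mathbb{T}_\omega$ from the genuine set $\omega \in V$: each finite ordinal $n$ has a canonical coding tree $\mathbb{T}_n$, and Comprehension assembles a single class whose tree has a fresh root with the $\mathbb{T}_n$ as its direct subtrees. For \emph{Transitivity}${}^+$, given $X$ coding $\mathbb{T}_x$ and a node $a_z$ one level below $a_y$, the required $Z$ is the restriction of $X$ to the $R$-transitive closure below $a_z$, again definable by Comprehension. For \emph{Pairing}${}^+$ I would reuse the tagging device behind $REP(\langle X, Y \rangle)$: create a new root and hang tagged copies of $\mathbb{T}_x$ and $\mathbb{T}_y$ beneath it, then quotient. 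For \emph{Union}${}^+$ I would gather all the direct subtrees of the direct subtrees of $\mathbb{T}_x$ under a single new root and quotient. In each case the structural requirements are cheap—since $\mathbb{T}_x$ is already a quotient tree its direct subtrees are pairwise non-isomorphic and sit at finite $R$-distance—so the only subtlety is the appeal to the First Coding Lemma to ensure the quotient collapses exactly the isomorphic duplicates.

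The genuinely hard step is \emph{$\Delta_n$-Separation}${}^+$, and it is here that the impredicativity of $\mathbf{MK}$ is essential. The difficulty is to express, \emph{inside} the class theory, the statement ``$\phi(y)$ holds in the theory of $(V)^+$'' for an arbitrary $\Delta_n$ formula $\phi$. The key observation is that a first-order quantifier ranging over $(V)^+$ is, under the coding, a quantifier ranging over quotient coding pairs, i.e. over classes; so I would define by recursion on the build-up of $\phi$ a translation $\phi^{*}$ into $\mathscr{L}_{\mathbf{MK}}$ in which each $(V)^+$-quantifier becomes an unrestricted class quantifier and $\widehat{\in}$, $\widehat{=}$ are replaced by $E_T$, $=_T$. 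Because the Scheme of Impredicative Class Comprehension permits formulas with arbitrary class quantification, $\phi^{*}$ is an admissible instance no matter how many quantifier alternations $\phi$ contains—this is precisely why the result holds for $\Delta_n$ for \emph{every} $n$ rather than merely for $\Delta_0$. I would then apply Comprehension to collect those direct subtrees $\mathbb{T}_y \mathrel{E_T} \mathbb{T}_x$ for which $\phi^{*}$ holds, place them under a new root to form $\mathbb{T}_z$, and quotient; the structural conditions hold automatically since these trees are already pairwise non-isomorphic and finitely deep. The main obstacle, needing the most care, is establishing the adequacy of the translation—that $\phi^{*}$ evaluated on the codes genuinely tracks satisfaction of $\phi$ in $(V)^+$—which I expect to argue by induction on formula complexity, matching each clause of the $(V)^+$-satisfaction relation against its $\mathbf{MK}^+$ counterpart.
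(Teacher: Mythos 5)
Your proposal is correct and takes essentially the same route as the paper: the paper likewise reduces everything to $\Delta_n$-Separation${}^+$ (items (1.)--(4.) are simply cited from Antos (2015) and Antos--Friedman (2017), where you instead sketch direct constructions), and its key step is the very same translation lemma---every $\Delta_n$ formula in the language of $(V)^+$ has a class-theoretic correlate, proved by induction on formula complexity with the First Coding Lemma discharging the atomic cases---followed by an application of impredicative Class Comprehension to assemble the separating tree. The only noteworthy divergence is a technical device in the Separation step: where you collect exactly those direct subtrees of $\mathbb{T}_a$ satisfying the translated formula under a fresh root, the paper keeps a fiber for \emph{every} member of $a$, replacing the ``false'' fibers by copies of a fixed tree $\mathbb{T}_{c_0}$ for an element already known to lie in the separated set (handling the empty case separately); both devices yield the required $\mathbb{T}_b$ after quotienting.
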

 
\begin{proof}
The proofs of (1.)--(4.) do not require the use of Class Bounding, and so we refer the reader to \cite{Antos2015a} and \cite{AntosFriedman2017a}. We thus begin with:

(5.) {\it $\Delta_n$-Separation${}^+$}. This is slightly more difficult in that it is not clear how to code a {\it first-order} formula within $(V)^+$. This is dealt with by the following:

\begin{lemma}
 ($\mathbf{MK}^+$) \cite{Antos2015a}, \cite{AntosFriedman2017a} Let $\phi$ be a formula in the language of $(V)^{+}$. Then there is a formula $\psi$ in the language of classes (and the trees they code) such that the theory of $(V)^+$ contains $\phi(x_1, ..., x_k)$ iff $\psi(\mathbb{T}_{x_1}, ..., \mathbb{T}_{x_k})$.
\end{lemma}

\begin{proof}
By induction on the complexity of $\phi$. Suppose $\phi$ is of the form $y \widehat{\in} x$, and let $\mathbb{T}_y$ and $\mathbb{T}_x$ be the associated trees. As $y \widehat{\in} x$, there is a tree $\mathbb{T}_{y'}$ with top node $a_{y'}$ in the level below the top node of $\mathbb{T}_x$, such that $\mathbb{T}_y$ is isomorphic to $\mathbb{T}_{y'}$. By the First Coding Lemma (trivial in the current setting), $\psi$ is then ``$Y$ is a class coding a tree $\mathbb{T}_y$, and $X$ is a class coding a tree $\mathbb{T}_x$ such that $\mathbb{T}_y$ is isomorphic to  a class $Y'$ coding a direct subtree $\mathbb{T}_{y'}$ of $\mathbb{T}_x$''. 
Suppose then that $\phi$ is of the form $y \widehat{=} x$. Then $\psi$ is simply ``The class that codes $\mathbb{T}_y$ is isomorphic to the class that codes $\mathbb{T}_x$'', which is again dealt with by the First Coding Lemma.

For the inductive steps where $\phi$ is of the form $\neg \chi_0$ or\footnote{The choice of $\wedge$ here was somewhat arbitrary, any suitable connective will do.} $\chi_1 \wedge \chi_2$, the result is immediate, we just either negate or conjoin the class-theoretic correlate of the $\chi_m$ provided by the induction step.

Suppose then that $\phi$ is of the form $\forall x \chi$, where $\chi$ is translatable in our class theory by $\chi'$. Then $\psi$ is ``For any class $X$ coding some tree $\mathbb{T}_x$, $\chi'(X)$''. 
\end{proof}

We now can proceed with the proof of $\Delta_n$-Separation${}^+$. Let $a, x_1,..,x_n$ be first-order names in the theory of $(V)^+$ and $\phi(x, x_1,...,x_n, a)$ be any $\Delta_n$-formula in the language of $(V)^+$. We need to show that:

$$b= \{x \widehat{\in} a | \phi(x, x_1,..,x_n,a)\} \widehat{\in} (V)^+$$

\noindent and hence that there is a class $B$ representing a coding pair for $b$ with a corresponding tree $\mathbb{T}_b$.

Let $\mathbb{T}_{x_1}$,...,$\mathbb{T}_{x_n}$ and $\mathbb{T}_a$ be codes, that we will refer to using  $x_1,...,x_n$ and $a$ in the language of $(V)^+$. Further let $\psi$ be the class-theoretic correlate of $\phi$. If $b$ is empty the result is immediate as there is a coding pair for the empty set. Assume then that $b$ is non-empty and $(V)^+$ thinks that $b$ contains some $c_0$ with coding pair tree $\mathbb{T}_{c_0}$. Let $\mathbb{T}_{a(c)}$ be a class variable over trees, with the condition that each $\mathbb{T}_{a(c)}$ corresponds to a direct subtree of $\mathbb{T}_a$ (i.e. $a(c)$ is a member of $a$ in $(V)^+$).  By Class Comprehension, there is a class $Z$ such that if $\psi(\mathbb{T}_{a(c)}, \mathbb{T}_{x_1}, ..., \mathbb{T}_{x_n}, \mathbb{T}_a)$ holds then $\{ z | \langle c, z \rangle \in Z \}$ is the direct subtree $\mathbb{T}_{a(c)}$ of $\mathbb{T}_a$, and if 
\sloppy $\psi(\mathbb{T}_{a(c)}, \mathbb{T}_{x_1}, ..., \mathbb{T}_{x_n}, \mathbb{T}_a)$ 
does not hold then $\{ z | \langle c, z \rangle \in Z \} = \mathbb{T}_{c_0}$.

We then let $\mathbb{T}_b$ be a tree with top node $b_0$ that has as all its direct subtrees the various $\{ z | \langle c, z \rangle \in Z \}$. The tree $\mathbb{T}_b$ then codes the existence of the necessary
\sloppy $b = \{x \widehat{\in} a | \phi(x, x_1,...,x_n, a) \} \widehat{\in} (V)^+$.
\end{proof}

\begin{remark}
Since we are coding in class theory the theory of $(V)^+$, we have `ordinals' that are `longer' than $On$. We will refer to ideal ordinals `past' $V$ (i.e. well-orders in the class theory longer than $On$) using variants of the Greek letters (such as `$\varkappa$', `$\vartheta$', `$\varsigma$', `$\varpi$' etc.). In discussion of second-order set theory, these are also sometimes referred to as `meta-ordinals'.
\end{remark}

With these properties in place, we proceed to find a code of $Hyp(V)$ in $(V)^+$:

\begin{theorem}
($\mathbf{MK}^+$) $(V)^+$ contains a code for $Hyp(V)$.
\end{theorem}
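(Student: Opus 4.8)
The plan is to realise the least admissible set over $V$ as a single element of $(V)^+$ and then pull it back to a class via the $E_T$/$=_T$ correspondence. The first step is to observe that $V$ is itself (coded as) an element of $(V)^+$. Since $V$ is transitive, extensional and well-founded, and each $x \in V$ already carries its genuine membership tree $\mathbb{T}_x$, Impredicative Class Comprehension together with the pairing device $REP$ assembles the class representing the coding pair whose root is a fresh top node and whose direct subtrees are exactly the $\mathbb{T}_x$ for $x \in V$ --- a single level carrying proper-class-many nodes, which the coding explicitly allows. Lemma \ref{codingpairs} makes the associated quotient pair extensional and well-founded, so $V \widehat{\in} (V)^+$.

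Next I would build the relativised constructible hierarchy over $V$ inside $(V)^+$. Setting $L_0(V) = TC(\{V\})$, $L_{\xi+1}(V) = Def(L_\xi(V))$ and $L_\lambda(V) = \bigcup_{\xi < \lambda} L_\xi(V)$ for limit $\lambda$, I would show by recursion on the ideal (meta-)ordinals that each level is an element of $(V)^+$: at successor stages the definable power-set operation is furnished by $\Delta_n$-Separation${}^+$ from Theorem \ref{maintheorem} (first translating the relevant first-order formula over $(V)^+$ into the language of classes, exactly as in the translation lemma used for $\Delta_n$-Separation${}^+$), and at limit stages by Union${}^+$; Pairing${}^+$ and Infinity${}^+$ supply the remaining closure. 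Because this recursion runs along a class well-order that may be longer than $On$, its solution is guaranteed inside $\mathbf{MK}^+$, which proves $\mathsf{ETR}$ and in fact recursion along any class well-order; the entire assignment $\xi \mapsto L_\xi(V)$ is then packaged as a single class of tagged nodes.

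Finally I would locate the least meta-ordinal $\varkappa$ at which $L_\varkappa(V)$ satisfies $KP$ --- note that this admissibility check, and in particular $\Delta_0$-Collection, must be verified \emph{at the level} $L_\varkappa(V)$, since $(V)^+$ itself is not claimed to be admissible --- and argue that such a $\varkappa$ lies within the meta-ordinals available to $(V)^+$. Correctness about well-foundedness is trivial here, since $(V, \mathcal{C}^V)$ sets the standard for being a $\beta$-model and the First Coding Lemma holds by fiat in $\mathbf{MK}^+$; hence the coded structure $L_\varkappa(V)$ is genuinely well-founded and extensional, so isomorphic to a transitive admissible set containing $V$, and minimality of $\varkappa$ identifies it with $Hyp(V)$. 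The class coding $L_\varkappa(V)$ is then the desired code. The main obstacle is precisely this last existence-and-reachability point: justifying the transfinite recursion of possibly more-than-$On$ length within $\mathbf{MK}^+$ and verifying that the least admissible level is actually attained and coded by a class of $(V)^+$ (equivalently, that the stopping meta-ordinal $\varkappa$ is itself a class well-order), rather than the construction outrunning the supply of meta-ordinals.
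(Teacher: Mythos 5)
Your scaffolding matches the paper's: both arguments build a relativised $L$-hierarchy over $V$ inside $(V)^+$ along the meta-ordinals (using Theorem \ref{maintheorem} for the closure properties), locate a least admissible level, and identify it with $Hyp(V)$ via Barwise's characterisation (Lemma \ref{HypL}). But the step you defer --- ``argue that such a $\varkappa$ lies within the meta-ordinals available to $(V)^+$'' --- is not a loose end to be tidied; it is the mathematical content of the theorem, and you give no argument for it. A priori nothing rules out that for \emph{every} class well-order $\varkappa$ coded in $\mathcal{C}^V$ the level $L_\varkappa(V)$ fails $\Delta_0$-Collection, i.e.\ that the least admissible over $V$ outruns the supply of meta-ordinals --- exactly the failure mode you name in your final sentence. $\mathsf{ETR}$ gives you each recursion along a \emph{given} class well-order; it says nothing about whether an admissible stopping point exists among them, so demanding the least level satisfying all of $KP$ and hoping it is attained leaves the theorem unproved.

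The paper closes this gap with a specific trick that your proposal lacks. Working with $L(V, <_V)$ (Global Choice supplies the class well-order $<_V$), one does not search for a level satisfying full $KP$; one takes the least meta-ordinal $\varsigma$ such that $L_\varsigma(V, <_V)$ satisfies $\Delta_n$-Separation${}^+$ --- a much cheaper existence claim, discharged by noting that instances of Separation for the hierarchy translate into instances of Separation${}^+$ for $(V)^+$, which hold by Theorem \ref{maintheorem} --- and then \emph{proves} that minimality of $\varsigma$ forces $\Sigma_n$-Collection${}^+$ to hold there as well. The argument is a diagonalisation: if Collection failed, there would be a $\Sigma_n$-definable function with domain $Ord(V)$ unbounded in $L_\varsigma(V,<_V)$; by minimality of $\varsigma$ every element of the level injects into $Ord(V)$, so this extends to a $\Sigma_n$-definable bijection between $Ord(V)$ and the level, and hence to a $\Sigma_n$-definable well-order $x_0, x_1, \dots$ of the subsets of $Ord(V)$ in the level; the diagonal class $X = \{\alpha \in On \mid \neg\, \alpha \mathbin{\widehat{\in}} x_\alpha\}$ is then $\Sigma_n$-definable over the level and a subclass of $Ord(V) \mathbin{\widehat{\in}} L_\varsigma(V,<_V)$, yet cannot belong to the level, contradicting Separation. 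Thus admissibility is attained automatically at the minimal Separation level; the well-order parameter is then eliminated, and Lemma \ref{HypL} identifies the resulting least admissible level $L_\varpi(V)$ with $Hyp(V)$. Without this argument, or some substitute for it, your construction shows only that the hierarchy can be built, not that $(V)^+$ contains a code for $Hyp(V)$.
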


\begin{proof}
We begin with the following definition concerning coding pairs:

\begin{definition}
 {\it $\Sigma_n$-Collection}${}^+$. Suppose that there is a class $X$ coding a quotient pair tree $\mathbb{T}_x$, in turn coding some $x \widehat{\in} (V)^+$. Let $\phi(p,q)$ be a $\Sigma_n$ formula in the language of $(V)^+$. Suppose further that for every $\mathbb{T}_y$ coding some $y \widehat{\in} (V)^+$ such that $\mathbb{T}_y E_T \mathbb{T}_x$, there is a $z \widehat{\in} (V)^+$ (represented by a class $Z$ coding a tree $\mathbb{T}_z$), such that $\phi(y, z)$. Then there is a class $A$ coding a tree $\mathbb{T}_a$ and $a \widehat{\in} (V)^+$ such that for every class $B$ coding a tree $\mathbb{T}_b E_T \mathbb{T}_x$ and $b \widehat{\in} x \widehat{\in} (V)^+$, there is a class $C$ coding a tree $\mathbb{T}_c$ and $c \widehat{\in} (V)^+$ such that $\mathbb{T}_c E_T \mathbb{T}_a$, $c \widehat{\in} a$, and $\phi(b ,c)$.
 \end{definition}

By Theorem \ref{maintheorem} we can build  versions of the $L$-hierarchy in $(V)^+$. Moreover, since Global Choice holds in the class theory, $(V)^+$ contains a class well-order $<_V$ of $V$. Let $L^{(V)^+}(V, <_V)$ then be the substructure of $(V)^+$ obtained by constructing $(V)^+$'s version of the $L$-hierarchy over $(V, <_R)$ through every meta-ordinal $\vartheta$. Then $L^{(V)^+}(V, <_V)$ validates $\Delta_n$-Separation${}^+$, since any instance of Separation for $L^{(V)^+}(V, <_V)$ translates into an instance of Separation for $(V)^+$ (this is a more specific instance of the fact that $L^\mathfrak{M}$ satisfies Separation whenever $\mathfrak{M}$ is a transitive model of Separation). So, pick the least $\varsigma$ such that $L_\varsigma(V, <_V)$ satisfies $\Delta_n$-Separation${}^+$. We claim that $L_\varsigma(V, <_V)$ also satisfies $\Sigma_n$-Collection${}^+$.

 Assume for contradiction that $\Sigma_n$-Collection fails in $L_\varsigma(V, <_V)$. Working from the perspective of $L_\varsigma (V, <_V)$, we then have a $\Sigma_n$-definable function $\phi(x,y)$ from $Ord(V)$ unbounded in $L_\varsigma (V, <_V)$. We then note that $V$ can be well-ordered in 
 \sloppy $L_\varsigma (V, <_V)$
 and therefore every element of $L_\varsigma (V, <_V)$ can be mapped into $Ord(V)$ (i.e. $Ord(V)$ is the largest cardinal of $L_\varsigma(V)$) since $\varsigma$ was chosen to be least. We can then extend $\phi(x,y)$ to a $\Sigma_n$-definable bijection between $Ord(V)$ and $L_\varsigma (V, <_V)$, to obtain a $\Sigma_n$-definable well-order $<_R$ on the subsets of $Ord(V)$ in $L_\varsigma (V, <_V)$. For $i \widehat{\in} I$ indexing $<_R$ on subsets $x_0, x_1, ..., x_i,...$ of $Ord(V)$ we then diagonalise to produce the following $X \subseteq Ord(V)$:

$$X= \{ \alpha \in On | \neg \alpha \widehat{\in} x_\alpha \}$$

This $X$ is $\Sigma_n$-definable over $Ord(V)$ by the properties of $<_R$ and the fact that $Ord(V) \widehat{\in} L_\varsigma (V, <_V)$ but cannot be in $L_\varsigma (V, <_V)$. Since $Ord(V) \widehat{\in} L_\varsigma (V, <_V)$, this violates $\Delta_n$-Separation${}^+$, $\bot$.

Thus $L_\varsigma (V, <_V)$ satisfies $\Sigma_n$-Collection${}^+$. We then know that since any instance of $\Sigma_n$-Collection${}^+$ in $L_\varsigma (V)$ is expressible in $L_\varsigma(V, <_V)$, $L_\varsigma(V)$ also satisfies $\Sigma_n$-Collection${}^+$. Thus, there is some $L_\varpi(V)$ satisfying $\Delta_0$-Separation and $\Sigma_1$-Collection, with $\varpi$ the least such.

We can now note the following:

\begin{lemma}\label{HypL}
\cite{Barwise1975a}\footnote{See \cite{Barwise1975a}, p. 60, Theorem 5.9.} For any transitive $\mathfrak{M}$, $Hyp(\mathfrak{M})$ is of the form $L_\alpha(\mathfrak{M})$ for $\alpha$ the least admissible above $\mathfrak{M}$.
\end{lemma}

\noindent and thus observe that $L_\varpi(V) \widehat{=} Hyp(V)$. Moving back to the coding, this implies the existence of a class $H$ coding a tree $\mathbb{T}_{Hyp(V)}$ for $Hyp(V)$.
\end{proof}

The above machinery then provides the resources to code $Hyp(V)$ by referring to classes. One natural question concerns how much comprehension is required to code proofs in $V$-logic. This is approximately answered by the following:

\begin{theorem}
\sloppy $\Sigma^1_1$-Comprehension is sufficient to produce a code for $Hyp(V)$ and $\Delta^1_1$-Comprehension is not sufficient to produce such a code.
\end{theorem}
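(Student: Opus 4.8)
The plan is to prove the two halves separately: an upper bound showing $\Sigma^1_1$-Comprehension produces the code, and a lower bound showing $\Delta^1_1$-Comprehension cannot. The guiding observation is that, exactly as in classical reverse mathematics, $\Sigma^1_1$-Comprehension and $\Pi^1_1$-Comprehension are equivalent over our base theory: given $A = \{x \mid \phi(x)\}$ for a $\Sigma^1_1$ formula $\phi$ obtained by $\Sigma^1_1$-Comprehension, the complement $\{x \mid x \notin A\}$ exists by the predicative comprehension already present in $\mathbf{NBG}$ (with $A$ as a class parameter), so every $\Pi^1_1$ class exists as well. Hence for the upper bound it suffices to exhibit the code of $Hyp(V)$ as a $\Pi^1_1$-definable (equivalently, inductively definable) class, while for the lower bound it suffices to show that this code is not $\Delta^1_1$-definable and that the $\Delta^1_1$-classes themselves form a model of $\Delta^1_1$-Comprehension.

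For the upper bound, recall the identification $Hyp(V) \widehat{=} L_\varpi(V)$ from the preceding theorem, where $\varpi$ is the least admissible meta-ordinal, obtained by building the $L$-hierarchy over $(V, <_V)$ inside $(V)^+$ and selecting the least level closing under $\Delta_0$-Separation${}^+$ and $\Sigma_1$-Collection${}^+$. First I would note that each successor and limit stage is first-order definable from a code of the previous stages, so the relation ``$T$ codes a well-founded structure isomorphic to some $L_\alpha(V)$ with $\alpha$ admissible and of least admissible height'' has a first-order matrix once $T$ is supplied as a class parameter. The one apparently higher-order ingredient, well-foundedness of the coding relation, is in fact first-order here: since $(V, \mathcal{C}^V)$ is a $\beta$-model (Remark (I)) and Global Choice is available, ``$R$ is well-founded'' is equivalent to ``there is no set function $f : \omega \to M_0$ with $f(n+1)\,R\,f(n)$'', a statement involving only set quantifiers. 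Consequently the code $H$ for $Hyp(V)$ can be presented either as the $\Sigma^1_1$ class ``$\exists T$ (first-order matrix)'' or, equivalently, as the least fixed point of the monotone first-order-definable operator that closes the constructibility trees under direct-subtree formation up to the first admissible stage, which is $\Pi^1_1$. Either way $H$ exists by $\Pi^1_1$-Comprehension, hence by $\Sigma^1_1$-Comprehension; non-vacuity of the definition (existence of at least one witnessing code) is the content of the preceding theorem, whose hierarchy construction uses only the $\Sigma^1_1$ instances just isolated.

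For the lower bound I would argue that $H$ is $\Pi^1_1$-complete and therefore not $\Delta^1_1$. Through the identification of $Hyp(V)$ with the admissible set governing $V$-logic, membership in $H$ encodes the $V$-logic provability/consistency relation, which is the analogue over $V$ of Kleene's $\mathcal{O}$: a complete $\Sigma_1$-over-$Hyp(V)$ (equivalently $\Pi^1_1$) object, and no such object is $\Delta^1_1$. To turn ``not $\Delta^1_1$-definable'' into ``not produced by $\Delta^1_1$-Comprehension'', I would exhibit the class structure $(V, \in, \mathcal{D})$ whose classes $\mathcal{D}$ are exactly the $\Delta^1_1$-definable classes over $V$ (the proper-class analogue of the hyperarithmetic sets $\mathrm{HYP}$), verify that it is a $\beta$-model satisfying $\Delta^1_1$-Comprehension, and observe that by the completeness just noted no member of $\mathcal{D}$ can code $Hyp(V)$. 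Since the existence of the code is thus false in a model of $\mathbf{NBG} + \Delta^1_1$-Comprehension, that theory cannot prove it.

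I expect the lower bound to be the main obstacle, for two reasons. First, the closure of the $\Delta^1_1$-definable classes under $\Delta^1_1$-Comprehension is delicate — substituting $\Delta^1_1$ class parameters into a $\Delta^1_1$ formula can \emph{a priori} raise complexity — and establishing it over a proper-class $V$ mirrors the nontrivial fact that $\mathrm{HYP}$ models $\Delta^1_1$-Comprehension. Second, the $\Pi^1_1$-completeness of the code over a proper class must be argued carefully rather than merely quoted from the $\omega$ case. Both difficulties are most cleanly handled by passing, as in the reduction of \S6, to a countable transitive $\beta$-model of the relevant class theory, where the claims reduce to the standard recursion-theoretic facts that $\mathcal{O}$ is $\Pi^1_1$-complete and lies outside the minimum $\beta$-model; the $\beta$-model property and the First Coding Lemma then transfer the conclusion back to $(V, \mathcal{C}^V)$.
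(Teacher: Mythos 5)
Your preliminary observations are correct: over $\mathbf{NBG}$ the classes are closed under complementation, so $\Sigma^1_1$- and $\Pi^1_1$-Comprehension are equivalent, and with Global Choice well-foundedness of a class relation is first-order in a class parameter (via the no-descending-$\omega$-sequence criterion). But both halves of your argument have gaps at exactly the points where the paper does its real work. For the upper bound, everything hinges on non-vacuity: your definition ``$\exists T(\text{first-order matrix})$'' only yields a code if some witnessing class $T$ exists, and you discharge this by asserting that the preceding theorem's hierarchy construction ``uses only the $\Sigma^1_1$ instances just isolated.'' That construction, however, was carried out in $\mathbf{MK}^+$: it builds $L_\vartheta(V,<_V)$ along arbitrary meta-ordinals and then selects a least admissible level, so it presupposes the existence of class well-orders of at least admissible length. $\Sigma^1_1$-Comprehension does give $\mathsf{ETR}$, i.e.\ recursion along \emph{given} class well-orders, but nothing yet guarantees that a sufficiently long well-order exists --- and such a well-order is essentially interchangeable with the very code for $Hyp(V)$ you are trying to produce, so the appeal is circular. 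The paper supplies the witness by a device your proposal lacks: it forms the class $H$ of elements of $Hyp(V,<_V)$ that are $\Sigma_1$-definable from parameters in $V\cup\{V\}\cup\{<_V\}$ --- a single instance of $\Sigma^1_1$-Comprehension, since $\Sigma_1$-facts about $Hyp(V,<_V)$ amount to the existence of well-founded witnessing structures --- checks that $H$ is extensional and $\Sigma_1$-elementary by choosing $<_V$-least witnesses, and observes that its transitive collapse is admissible and contains $V$ and $<_V$, hence by minimality \emph{is} all of $Hyp(V,<_V)$. The term model itself is the witness; no previously constructed hierarchy is needed.

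For the lower bound, the paper's model is not the family of $\Delta^1_1$-definable classes but simply the subclasses of $V$ that belong to $Hyp(V)$. With that choice both required facts come directly from admissibility theory: this family models $\Delta^1_1$-Comprehension (the admissible-set analogue of Kleene's theorem that $\mathrm{HYP}$ models $\Delta^1_1$-CA), and it contains no code for $Hyp(V)$ for the elementary reason that there is no code for $Hyp(V)$ inside $Hyp(V)$ --- an admissible set cannot contain a well-founded relation of rank at least its own ordinal height --- so no $\Pi^1_1$-completeness argument is needed. Your route instead requires the class analogue of $\mathrm{HYP}=\Delta^1_1$ together with closure of the $\Delta^1_1$-definable classes under class parameters, which you yourself flag as delicate, and your proposed repair --- passing to a countable $\beta$-model and ``transferring back'' --- does not work as stated: the theorem needs a family of classes over $V$ itself in which $\Delta^1_1$-Comprehension holds but no code exists, and $\Delta^1_1$-definability over a countable model neither coincides with nor transfers to $\Delta^1_1$-definability over $(V,\mathcal{C}^V)$ by any elementarity available in this setting (the failure of exactly this kind of higher-order transfer is the paper's central theme). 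Note finally that some of your complexity bookkeeping (absorbing first-order quantifiers to keep definitions $\Sigma^1_1$ or $\Delta^1_1$) would tacitly invoke Class Bounding, which the paper deliberately avoids assuming.
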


\begin{proof}
To see that $\Sigma^1_1$-Comprehension is sufficient to produce a code for $Hyp(V)$, consider $H = \{x: x \widehat{\in} Hyp(V, <_V) \wedge \text{``}x \text{ is } \Sigma_1\text{-definable}$ in $ Hyp(V, <_V)$ using parameters from $V \cup \{V\} \cup \{ <_V \}$ ''$\}$. This is an extensional, $\Sigma_1$-elementary submodel of $Hyp(V, <_V)$, since we can choose least witnesses to $\Delta_0$-formulas to get $\Sigma_1$-elementarity. But the transitive collapse of $H$ must be all of $Hyp(V, <_V)$ as it is admissible and contains $V$ and $<_V$ as elements (and since $Hyp(V, <_V)$ is the smallest such structure). We then obtain a code for $Hyp(V)$ as before.

To see that $\Delta^1_1$-Comprehension is not sufficient, note that $V$ together with the subclasses of $V$ which belong to $Hyp(V)$ forms a model of $\Delta^1_1$-Comprehension. This is a model of $\Delta^1_1$-Comprehension with no code for $Hyp(V)$, since there is no code for $Hyp(V)$ inside $Hyp(V)$.
\end{proof}

In sum, we have seen thus far that $Hyp(V)$ can be coded within the class theory licensed by an interpretation of proper classes that admits a small amount of impredicativity (namely: $\mathbf{NBG}$ + $\Sigma^1_1$-Comprehension + The First Coding Lemma). We now turn to the issue of coding $V$-logic within $Hyp(V)$ (as rendered in class theory).

\subsection{Coding $V$-logic in $Hyp(V)$}

We will show that if $\phi$ is a consequence of a $V$-logic theory $\mathbf{T}$, then a proof of $\phi$ appears in $Hyp(V)$ (i.e. the (code of the) least admissible structure containing $V$), completing the rendering of $V$-logic for the Universist. Much of this goes through as in \cite{Barwise1975a}, but we have used a slightly different definition of proof code, and so this remains to be checked.

 We wish to show that if there is a proof of $\phi$ in $V$-logic, then there is a proof code of $\phi$ in $Hyp(V)$. First, however, we must be precise about how we interpret the extended syntax of $V$-logic. Since we have shown already how to code $Hyp(V)$, we work directly in the language of $Hyp(V)$ and drop the class-theoretic locutions:

\begin{definition}
($\mathbf{MK}^+$) The language and proofs of $V$-logic are interpreted as follows (working in $(V)^+$):

\begin{enumerate}[(i)]
 \item Every set $x$ is named by $\langle x, 3 \rangle$ (so, for example, if $x = \omega$, then $\bar{\omega} = \langle \omega, 3 \rangle$ (to avoid the double use of names for natural numbers and the G\"{o}del coding of the connectives).
 \item $\in^V$ and $V$ name $\in$ and $\bar{V}$ respectively (remembering that we are currently working within $(V)^+$, and these appear as sets from this perspective).
 \item The relevant $\bar{W}$ (and possibly $\bar{\mathbb{P}^C}$, $\bar{G^C}$, or any other required predicates) can be represented by any object not otherwise required for the syntax of $V$-logic, so we may use $\{ V \}, \{\{V\}\}, \{\{\{V\}\}\},...$ and so on (for any Zermelo-style construction of singletons derived from $V$).\footnote{For most axioms we only need one (and at most three) extra predicates, but we make room for the use of several different outer models in case others wish to talk about relationships between {\it incompatible} extensions using the same axiom.}
 \item After a suitable G\"{o}del coding for the connectives and quantifiers has been chosen, we represent well-formed formulas of $V$-logic with sequences of symbol codes.
 \item Proofs are represented by the appropriate trees comprising codes of the relevant sentences as nodes.
\end{enumerate}
\end{definition}

We can now state the following:

 \begin{theorem}\label{hypVproof}
 ($\mathbf{MK}^+$) \cite{Barwise1975a} Suppose that there is a proof of $\phi$ in $V$-logic. Then there is a proof code of $\phi$ in $Hyp(V)$.
 \end{theorem}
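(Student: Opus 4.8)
The plan is to run the classical Barwise absoluteness argument (\cite{Barwise1975a}) in the present setting, taking care of the one place where our modified definition of proof code departs from the standard one. Throughout I work inside (the code of) $Hyp(V)$, which by the preceding theorem is available as $L_\varpi(V)$ and which, being admissible, satisfies $KP$ with $V$ as an element. The strategy is to let $\Theta$ be the collection of sentences $\psi$ of $\mathscr{L}^V_\in$ (together with whatever extra predicates $\bar{W}$, $\bar{\mathbb{P}^C}$, etc. the theory $\mathbf{T}$ employs) for which a proof code of $\psi$ from $\mathbf{T}$ exists as an element of $Hyp(V)$, and to show that $\Theta$ contains every axiom and is closed under each of the three inference rules. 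Then, arguing by induction on the well-founded tree structure of the given (external, possibly proper-class-sized) $V$-logic proof of $\phi$, each of its subconclusions---and in particular $\phi$ itself---lands in $\Theta$.

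The base cases and the two ``small'' rules are routine closure facts about admissible sets. Since $V \in Hyp(V)$, the atomic diagram of $V$ is decided by a $\Delta_0$ formula with $V$ as a parameter, so every axiom of $V$-logic (the instances $\bar{x} \in \bar{V}$, the true atomic and negated atomic sentences, the first-order logical axioms) is realised by a single-node tree lying in $Hyp(V)$. Closure under \emph{modus ponens} is immediate from Pairing and Union: one joins the two codes (which exist in $Hyp(V)$ by the induction hypothesis) beneath a new root. Closure under the Set-rule, where the premises are $\phi(\bar{b})$ for $b$ ranging over a genuine set $a \in V$, follows from $\Sigma_1$-Collection inside $Hyp(V)$ applied over $a$, which gathers the subtrees into one set and hence into a single tree of $Hyp(V)$.

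The main obstacle is closure under the $V$-rule, since here the premises $\phi(\bar{b})$ range over all $b \in V$ and the resulting tree branches into proper-class-many (from $V$'s standpoint) subtrees; the whole point is that such a tree is nonetheless a \emph{set} from the internal perspective of $Hyp(V)$, because $V$ is an element there. First I would verify that the relation ``$p$ is a proof code of $\psi$ from $\mathbf{T}$'' is $\Delta_1$ over $Hyp(V)$: the leaf-and-rule conditions are $\Delta_0$ with $V$ as parameter, and well-foundedness of a set-sized tree is $\Delta_1$ over any admissible set. Consequently ``$\exists p\,(p$ is a proof code of $\phi(\bar{b}))$'' is $\Sigma_1$, and by the induction hypothesis it holds for every $b \in V$. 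Since $V \in Hyp(V)$, admissibility supplies $\Sigma_1$-Collection: there is a single set $P \in Hyp(V)$ such that every $b \in V$ has a witnessing proof code in $P$. I then form the tree whose root is the code of $\forall x \in \bar{V}\, \phi(x)$ and whose immediate subtrees are exactly those members of $P$ that prove some $\phi(\bar{b})$---a subcollection of $P$ isolated by $\Delta_0$-Separation, hence again an element of $Hyp(V)$. This tree is well-founded (it merely adds one level above the well-founded codes in $P$) and meets the $V$-rule's requirement that \emph{every} $b \in V$ have a subtree proving $\phi(\bar{b})$.

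It is precisely here that the modified definition of proof code earns its keep. Because we allow the same subproof to be attached for several $b$ and do not demand a unique (let alone least) witness per $b$, we never need to choose canonical proof codes; $\Sigma_1$-Collection alone delivers the covering set $P$, and no well-ordering of $Hyp(V)$ is invoked. This matters because, as noted in the remark following the definition of $V$-logic proofs, $Hyp(V)$ need not be well-orderable in the ambient class theory. With closure under all three rules established, the induction on the external proof tree completes the argument and yields a proof code of $\phi$ inside $Hyp(V)$.
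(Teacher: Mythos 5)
Your proof is correct and follows essentially the same route as the paper's: induction on the structure of the external $V$-logic proof, with the base cases and \emph{modus ponens} handled by Pairing/$\Sigma_1$-Replacement inside $Hyp(V)$ and the Set-rule and $V$-rule handled by $\Sigma_1$-Collection, exactly as in the paper (your explicit check that the proof-code relation is $\Delta_1$ over $Hyp(V)$, and your observation that the modified proof-code definition lets Collection do the work without choosing canonical witnesses, are details the paper leaves implicit or relegates to a remark). The only quibble is that isolating the genuine proof codes inside the collected set $P$ uses $\Delta_1$- (i.e., $\Delta$-) Separation rather than literal $\Delta_0$-Separation, since well-foundedness is not $\Delta_0$; but $\Delta$-Separation is a theorem of $KP$, so this is harmless.
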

 
 \begin{proof}
We credit this to Barwise, since the proof is not very difficult, but since we have a different notion of proof-code we need to make sure that $V$-logic is still representable in $Hyp(V)$. 

 We proceed by induction on the complexity of our proof $P$. Suppose that there is a proof $P$ of $\phi$ in $V$-logic. Then either:
  
  \begin{enumerate}[(a)]
  \item $P$ is one line.
  \item $P$ is more than one line.
  \end{enumerate}
  
  We deal with (a) first. Suppose that $P$ is one line. Then either (i) $\phi$ is of the form $\bar{x} \in \bar{V}$, (ii) $\phi$ is an atomic or negated atomic sentence of $\mathscr{L}_\in \cup \{\bar{x}| x \in V \}$, (iii) $\phi$ is an axiom of first-order logic in $\mathscr{L}^V_\in$, or (iv) $\phi$ is an additional axiom containing some extra predicate (such as $\bar{W}$, $\bar{\mathbb{P}^C}$, or $\bar{G^C}$).
  
  For (i), suppose $\phi$ is of the form $\bar{x} \in \bar{V}$ for $x \in V$. Then, the result is immediate: the required sentence is in $Hyp(V)$ by Pairing, and hence so is the tree coding its proof (i.e. $\{ \{\phi \} , \emptyset \}$). In the case of (ii), $\phi$ appears in $V$, and so it is immediate that $\phi$ is in $Hyp(V)$, with the relevant proof tree. For (iii), we note that all constructions of first-order axioms from simpler formulas $\psi$ and $\chi$ (that are assumed, for induction, to be in $Hyp(V)$) can be chained together through Pairing. For (iv), since we represent the various extra predicates by objects that are not pieces of syntax in other parts of $V$-logic but are in $Hyp(V)$, any axiom of the form ``$\bar{W}$ is such that $\Phi$'' is simply a finite sequence of sets already present in $Hyp(V)$ (and similarly when $\bar{\mathbb{P}^C}$ or $\bar{G^C}$ are present). Again, repeated application of Pairing ensures that $\phi$ is in $Hyp(V)$, as well as the relevant proof tree.
  
  (b) Suppose then that $P$ is more than one line. Assume for induction that all prior steps to the final inference to $\phi$ have proofs in $Hyp(V)$. Then either (i) $\phi$ is an axiom, or (ii) $\phi$ follows from $\psi$, $(\psi \rightarrow \phi)$ via modus ponens, or (iii) $\phi$ is of the form $\forall x \in \bar{a} \psi(x)$ and follows from $\psi(\bar{b})$ for all $b \in a$ by the Set-rule, or (iv) $\phi$ is of the form $\forall x \in \bar{V} \psi(x)$ and follows from $\psi(\bar{x})$ for all $x \in V$ by the $V$-rule.
  
  For each of the steps we need to construct, from the given proof trees, a new proof tree coding a proof of $\phi$. We already know that the relevant pieces of syntax exist (by part (a)) and so the challenge is simply in the construction of the trees in $Hyp(V)$. 
  
  (i) has already been dealt with in part (a). (ii) Suppose for induction that $\psi$ and $(\psi \rightarrow \phi)$ have proofs coded in $Hyp(V)$ by $\mathbb{T}_\psi = \langle T_\psi, <_\psi \rangle$ and $\mathbb{T}_{(\psi \rightarrow \phi)} = \langle T_{(\psi \rightarrow \phi)}, <_{(\psi \rightarrow \phi)} \rangle$. Since we know that $Hyp(V)$ satisfies finite iterations of Pairing, we only need to construct $T_\phi$ and $<_\phi$. We can easily construct $T_\phi = T_\psi \cup T_{(\psi \rightarrow \phi)} \cup \{ \phi \}$. Next, we define $<_\phi$ as follows:
  
  \begin{description}
  \item $x <_\phi y$ iff:
 
  \begin{enumerate}[(i)]
  \item $x <_\psi y$, or
  \item $x <_{(\psi \rightarrow \phi)} y$, or
  \item $y = \phi$.
  \end{enumerate}
  \end{description}
  
  Since we have $<_\psi$ and $<_{(\psi \rightarrow \phi)}$ already ($Hyp(V)$ is transitive), we just need to construct $\{ \langle x, \phi \rangle | x \in T_\psi \vee x \in T_{(\psi \rightarrow \phi)} \}$. We have that $\phi \in Hyp(V)$, and also for any object $y \in Hyp(V)$, $\langle y , \phi \rangle \in Hyp(V)$. We then (working with $Hyp(V)$) define the following formula:

  $$\chi(x, y) =_{df} x \in T_\psi \cup T_{(\psi \rightarrow \phi)} \wedge y= \langle x,\phi \rangle$$
  
  $\chi(x, y)$ is clearly a $\Delta_0$ formula defining a function that maps any particular $x \in T_\psi \cup T_{(\psi \rightarrow \phi)}$ to $\langle x , \phi \rangle$. We also have the following lemma:
  
  \begin{lemma}
  ($\mathbf{MK}^+$) \cite{Barwise1975a}\footnote{See \cite{Barwise1975a}, p. 17, Theorem 4.6.} {\it $\Sigma_1$-Replacement.} $Hyp(V)$ satisfies Replacement for $\Sigma_1$ formulas.
  \end{lemma}

We thus have $\{ \langle x, \phi \rangle | x \in T_\psi \wedge x \in T_{(\psi \rightarrow \phi)} \} \in Hyp(V)$ as desired. $\langle T_\phi, <_\phi \rangle$ clearly codes a proof of $\phi$ from $\psi$ and $(\psi \rightarrow \phi)$, the proof steps are inherited from the previous trees, and each proof step prior to $\phi$ is $<_\phi$-related to $\phi$. 
 
We deal with (iii) and (iv) in tandem. As the strategy is the same for both, we give only the proof of (iv).  Suppose then that $\phi$ is of the form $\forall x \in \bar{V} \psi(x)$ and follows from $\psi(\bar{b})$ for all $b \in V$ by the $V$-rule. Assume for induction that every $\psi(\bar{b})$ has a proof code 
\sloppy $\mathbb{T}_{\psi(\bar{b})} = \langle T_{\psi(\bar{b})}, <_{\psi(\bar{b})} \rangle \in Hyp(V)$. 
We then identify:

 \begin{lemma}
 ($\mathbf{MK}^+$)  \cite{Barwise1975a}\footnote{See \cite{Barwise1975a}, p. 17, Theorem 4.4.} {\it $\Sigma_1$-Collection.} $Hyp(V)$ satisfies Collection for $\Sigma_1$ formulas.
  \end{lemma}
  
Using $\Sigma_1$-Collection, we have a non-empty set $X$ which contains proofs of $\psi(\bar{b})$ for each $b$, and a non-empty set $Y$ containing the relations of each of the $\mathbb{T}_{\psi(\bar{b})}$. Separating out from $X$ and $Y$, yields a set $X'$ containing just the domains of proof trees of $V$-logic proofs in $X$, and a set $Y'$ containing just the relations of $V$-logic proof trees in $Y$. The argument for the full tree of $\mathbb{T}_\phi$ is then exactly the same as in (ii).
\end{proof}

We now are in a position where:

\begin{enumerate}[(1)]
 \item Extensions of $V$ can be coded syntactically using $V$-logic.
 \item $Hyp(V)$ can be coded in $\mathbf{MK}^+$ (in fact,  $\mathbf{NBG}$ + $\Sigma^1_1$-Comprehension + The First Coding Lemma suffices).
 \item If $\phi$ is provable in $V$-logic, then $\phi$ has a proof code in $Hyp(V)$.
\end{enumerate}

Thus consistency and the axioms we have discussed can be coded using $\mathbf{MK}^+$ class theory. We could stop here, having shown how the Hilbertian Challenge can be met for higher-order properties of $V$, so long as some impredicative class theory is accepted.\footnote{A slight wrinkle is that though the $\mathsf{IMH}$ can be formulated in $\mathbf{NBG}$+$\Sigma^1_1$-Comprehension with the First Coding Lemma, it is always false in models of this form, since the truth predicate obtained through $\Sigma^1_1$-Comprehension implies that there is a transitive model containing every real, which contradicts the $\mathsf{IMH}$ (see \cite{Friedman2006a}, p. 597, Theorem 15 here). However, restricting the $\mathsf{IMH}$ to only take into account outer models satisfying $\Sigma^1_1$-Comprehension alleviates this worry (since the proof depends on considering least outer models that violate $\Sigma^1_1$-Comprehension), and this version of the $\mathsf{IMH}$ can also be formalised in $V$-logic (since $V$-logic captures \textit{any} syntactic theory consistent with the initial structure of $V$).} However, we still need to satisfy the Methodological Constraint, and it is to this issue that we now turn.

\section{Satisfying the Methodological Constraint}

We are still left with the question of why we should regard $V$-logic as capturing the notion of `extension of $V$', rather than merely a gerrymandered syntactic coding. Resolution of this issue is linked to responding to the Methodological Constraint; if we can show that truth in $V$-logic corresponds to truth about \textit{actual} extensions of models very similar to $V$, we would go some way towards establishing that $V$-logic stands to arbitrary extensions as the use of forcing relations stands to set forcing extensions. In this section, we'll discuss strategies for satisfying the Methodological Constraint. As we will argue, we can augment our interpretation with methods for reducing the theory of $V$-logic to the countable, thereby yielding an interpretation satisfying the Methodological Constraint.

Recall the main challenge that we wanted to satisfy:

\begin{description}
\item[The Hilbertian Challenge.] Provide philosophical reasons to legitimise the use of extra-$V$ resources for formulating axioms and analysing intra-$V$ consequences.
\end{description}

The coding performs well with respect to The Hilbertian Challenge in this raw form. We have provided philosophical reasons to accept the use of $\mathbf{MK}^+$ over $V$ using the mechanisms of class theory. We then showed how to code extensions of $V$ using $V$-logic and that $Hyp(V)$ can formalise the notion of consistency in $V$-logic. Since $Hyp(V)$ can be coded using $\mathbf{MK}^+$ over $V$, we have classes over $V$ that code consistency in $V$-logic, and hence discourse about extensions of $V$. Given the cogency of the resources of $\mathbf{MK}^+$ (argued for earlier) we can thus see why talk of extensions will not lead us astray.

The Hilbertian Challenge was, however, tempered by an additional desideratum on any interpretation of extension talk:

\begin{description}
 \item[The Methodological Constraint.] In responding to the Hilbertian Challenge, do so in a way that accounts for as much as possible of our naive thinking about extensions and links them to structural features of $V$. In particular, if we wish to apply an extending construction to $V$, there should be an {\it actual} set-theoretic model, resembling $V$ as much as possible, that has an extension similar to the one we would like $V$ to have.
\end{description}

As it stands, we do {\it not} have a naive interpretation of extensions of $V$: We are interpreting model-theoretic claims about extensions of $V$ as the syntactic consistency of theories in $V$-logic. We would like to find a place for our naive thinking concerning extensions and relate this discourse to our analysis of truth in $V$. The key fact here is that $Hyp(V)$ (and hence claims about consistency in $V$-logic) is coded by a {\it single} class. As we shall argue, by reducing the theory of $Hyp(V)$ to a countable model, we yield an interpretation of extension talk satisfying the Methodological Constraint.

Recall that for parameter-free first-order truth, the countable transitive model strategy fared reasonably well, barring its failure to account for greater than first-order axioms. In responding to the Hilbertian Challenge and trying to satisfy the Methodological Constraint, we shall thus pursue the strategy of finding a countable transitive model that mirrors $V$ with respect to the theory of $V$-logic. As we shall see, this then facilitates the formulation of axioms about $V$ that make use of extension talk, whilst finding an arena for our naive thinking.

One might try to argue for the existence of such a countable transitive model informally.\footnote{Say by postulating Skolem functions of the required kind. This is a strategy advocated at one stage by Cohen, for example when he writes:

\begin{quote}
The L\"{o}wenheim-Skolem theorem allows us to pass to countable submodels of a given model. Now, the ``universe'' does not form a set and so we cannot, in $\mathbf{ZF}$, prove the existence of a countable sub-model. However, informally we can repeat the proof of the theorem. We recall that the proof merely consisted of choosing successively sets which satisfied certain properties, if such a set existed. In $\mathbf{ZF}$ we can do this process finitely often. There is no reason to believe that in the real world this process cannot be done countably many times and thus yield a countable standard model for $\mathbf{ZF}$. (\cite{Cohen1966a}, p79)
\end{quote}

One might adapt this idea to the current context by running a similar argument, but introducing a predicate $H$ for (the class-theoretic code of) $Hyp(V)$ into the language of $\mathbf{ZFC}$.} Fortunately, we can do better in the current context by availing ourselves of the following result:\footnote{We are very grateful to Kameryn Williams for discussion here.}

\begin{lemma}
(Folklore)\footnote{Presentations of both these lemmas are available in \cite{Fujimoto2012a} (esp. p. 1514) and \cite{GitmanHamkins2017a} (esp \S4).} $\mathsf{ETR}$ (in fact $\mathsf{ETR}$ for recursions of length $\omega$) implies the existence of truth predicates relative to any class.
\end{lemma}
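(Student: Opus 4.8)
The plan is to build the truth predicate by the usual Tarskian recursion on formula complexity, and to observe that this recursion is precisely an instance of $\mathsf{ETR}$ along $\omega$. First I would fix an arbitrary class $A$ and a G\"{o}del coding of the formulas of the language $\{\in, \dot{A}\}$ (first-order set theory augmented with a unary predicate symbol $\dot{A}$ to be interpreted by $A$), together with a coding of variable assignments. Since each formula has only finitely many free variables, the relevant assignments may be coded as finite sequences of sets, and each formula $\phi$ carries a finite logical complexity $c(\phi) \in \omega$, namely the depth of nesting of its connectives and quantifiers.

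Next I would set up the recursion along the well-order $(\omega, <)$. At stage $n$ I define a class $T_n$ consisting of those pairs $(\ulcorner \phi \urcorner, s)$ with $c(\phi) = n$ for which $(V, \in, A) \models \phi[s]$, where the value is fixed by the Tarskian clauses: atomic formulas $x \in y$, $x = y$, $\dot{A}(x)$ are decided directly using $\in$, $=$, and membership in $A$ (so $A$ enters only as a class parameter); negations, conjunctions, and existential quantifications are decided from the truth values already recorded at earlier stages, using the fact that every proper subformula of $\phi$ has strictly smaller complexity and so is catalogued by some $T_m$ with $m < n$. The existential clause uses an unrestricted first-order quantifier over $V$, which is permitted because the recursive definition is required only to be \emph{elementary}. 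This specifies $T_n$ by a first-order formula in the parameter $A$ and the restriction of the recursion to stages below $n$; it is therefore a legitimate elementary transfinite recursion of length $\omega$. Applying $\mathsf{ETR}$ (for recursions of length $\omega$) yields a solution, i.e. a single class coding the sequence $\langle T_n : n < \omega \rangle$. Setting $T = \bigcup_{n < \omega} T_n$ then gives a class, and a routine induction on complexity---now needing only the defining equations of the solution, not $\mathsf{ETR}$ again---verifies that $T$ globally satisfies every Tarskian clause, hence is a full satisfaction predicate for $(V, \in, A)$, that is, a truth predicate relative to $A$.

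The main obstacle is not any single hard step but the bookkeeping required to confirm that the Tarskian conditions genuinely constitute an elementary recursive definition of the shape $\mathsf{ETR}$ demands. In particular one must check that the clause governing a formula at stage $n$ refers only to truth values assigned at strictly earlier stages (guaranteed by $c(\psi) < c(\phi)$ for each proper subformula $\psi$), and that the G\"{o}del coding of formulas and of assignments is sufficiently absolute that the entire defining condition is first-order over $(V, \in, A)$ with $A$ as a parameter. Once this is arranged, the existential-quantifier clause, which at first glance looks like the delicate case, causes no difficulty, since first-order quantification over all of $V$ is exactly what an elementary recursion is entitled to use.
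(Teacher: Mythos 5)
Your proof is correct and is precisely the standard folklore argument: the paper itself offers no proof of this lemma, deferring to \cite{Fujimoto2012a} and \cite{GitmanHamkins2017a}, and the argument presented in those sources is exactly your construction---viewing the Tarskian recursion on formula complexity, with the class $A$ entering only as a parameter in the atomic clause, as an elementary recursion of length $\omega$ and applying $\mathsf{ETR}_\omega$ to obtain the solution class whose union is the satisfaction predicate for $(V, \in, A)$.
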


\begin{lemma}
(Folklore) $\mathsf{ETR}$ is provable in $\mathsf{NBG} + \Sigma^1_1$-Comprehension.
\end{lemma}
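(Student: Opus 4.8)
The plan is to produce the solution class directly by a single instance of $\Sigma^1_1$-Comprehension, deriving its existence and uniqueness from well-founded induction along the given relation. Fix a well-founded binary class relation $R$ together with a first-order recursion rule $\varphi(y, F, A)$ (with class parameter $A$), so that a \emph{solution} is a class $S$ with $(S)_a = \{ y : \varphi(y, S \upharpoonright \{b : b\, R\, a\}, A) \}$ for every $a$ in the field of $R$. Call a class $P$ an \emph{$a$-approximation} if $P$ is a function, $a \in \mathrm{dom}(P)$, the set $\mathrm{dom}(P)$ is $R$-downward closed (that is, $\forall c \in \mathrm{dom}(P)\,\forall b\,(b\,R\,c \to b \in \mathrm{dom}(P))$), and $P$ obeys the recursion rule at every point of its domain. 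The crucial observation is that ``$P$ is an $a$-approximation'' is \emph{first-order} in $P$, with $R$, $A$, $a$ as parameters: phrasing the domain condition as downward-closedness, rather than as ``$\mathrm{dom}(P)$ equals the $R$-transitive closure of $a$'', avoids any appeal to a transitive-closure operation, which would not itself be first-order. Consequently the relation
\[ (a, v) \in S \;\Longleftrightarrow\; a \in \mathrm{field}(R) \wedge \exists P\,\big(P \text{ is an } a\text{-approximation} \wedge (a,v) \in P\big) \]
is $\Sigma^1_1$, so $\Sigma^1_1$-Comprehension yields the class $S$.

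It then remains to verify that this $S$ is a genuine solution, which rests on two inductions. First I would prove \emph{compatibility}: any two approximations agree wherever both are defined. Given $a$-approximations $P$ and $P'$, the class $\{ c \in \mathrm{dom}(P) \cap \mathrm{dom}(P') : (P)_c \neq (P')_c \}$ is first-order definable and hence a class already in plain $\mathbf{NBG}$; were it nonempty it would have an $R$-minimal element $c_0$, and downward-closedness together with minimality would force $(P)_{c_0} = (P')_{c_0}$, a contradiction. This makes $S$ single-valued. Second, I would prove \emph{existence}: every $a \in \mathrm{field}(R)$ has an $a$-approximation. Arguing by well-founded induction, suppose each $R$-predecessor $b$ of $a$ has a $b$-approximation. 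Gluing these via $\Sigma^1_1$-Comprehension into $P_0 = \{ (c,v) : \exists b\,(b\,R\,a \wedge \exists Q\,(Q \text{ is a } b\text{-approximation} \wedge (c,v) \in Q)) \}$ produces, using compatibility, a function with $R$-downward-closed domain containing every predecessor of $a$ and satisfying the rule there; adjoining the single value $(a, \{ y : \varphi(y, P_0 \upharpoonright \{b : b\,R\,a\}, A) \})$ then yields an $a$-approximation. Finally, uniqueness and totality give that $S \upharpoonright \{ b : b\,R\,a \}$ coincides with the relevant approximation, so $S$ satisfies the recursion equation at every $a$.

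The main obstacle, and the precise place where $\Sigma^1_1$-Comprehension rather than mere $\mathbf{NBG}$ is indispensable, is the well-founded induction establishing existence. The property $\psi(a) \equiv$ ``there is an $a$-approximation'' is $\Sigma^1_1$, so to run the induction in its usual contrapositive form one must form the class $\{ a : \neg \psi(a) \}$ of putative counterexamples and extract an $R$-minimal element; but $\neg\psi$ is $\Pi^1_1$, whereas $\mathbf{NBG}$ comprehension handles only first-order (class-parameter) formulas. Here I would invoke the fact that $\Sigma^1_1$-Comprehension also delivers $\Pi^1_1$-Comprehension, since classes are closed under complement: one forms the $\Sigma^1_1$ class $\{a : \psi(a)\}$ and takes its complement, which is exactly what licenses forming $\{ a : \neg\psi(a) \}$ and appealing to well-foundedness. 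The remaining steps, namely compatibility and the gluing, are routine once each comprehension instance is confirmed to lie in the $\Sigma^1_1$ fragment; the only care required is to keep every auxiliary class ($P_0$, $S$, and the induction class) within the $\Sigma^1_1$/$\Pi^1_1$ level.
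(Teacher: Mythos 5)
Your proof is correct, and since the paper offers no proof of its own for this lemma (it is cited as folklore, with pointers to Fujimoto 2012 and Gitman--Hamkins 2017), your argument is in fact the standard one those references present: define the solution as the $\Sigma^1_1$ union of all partial solutions (``approximations'' with $R$-downward-closed domains), prove compatibility by $R$-induction on a first-order definable class of disagreements, and prove existence by $R$-induction on the class of counterexamples, which requires $\Pi^1_1$-Comprehension and is correctly obtained as the complement of a $\Sigma^1_1$ class. The only convention you tacitly rely on --- standard in the $\mathsf{ETR}$ literature and needed for the lemma to make sense --- is that well-foundedness of a class relation means every nonempty \emph{subclass} (not merely every nonempty subset) has an $R$-minimal element, which is what licenses your two extractions of minimal elements from proper classes.
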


In particular since $Hyp(V)$ (coded in the class theory) is a well-founded class relation, using $\Sigma^1_1$-Comprehension we can prove the existence of a truth predicate relative to this class using $\mathsf{ETR}$. Letting $H(x)$ be a predicate applying exactly when $x \in Hyp(V)$, and $T_H$ be a truth predicate for the language of $\mathscr{L}_\mathbf{NBG} \cup \{ H(x) \}$, we can then prove (using reflection) that there is a $V_\alpha$ that is elementary in $V$ for truth in $\mathscr{L}_\mathbf{NBG} \cup \{ H(x) \}$ (in fact, $\mathsf{ETR}$ implies that $V$ is a tower of such universes\footnote{See here \cite{GitmanHamkins2017a}, p. 129.}). Using the L\"{o}wenheim-Skolem and Mostowski Collapse Theorems over this model (i.e. $(V_\alpha, \in, Hyp(V_\alpha) )$) yields a countable transitive model $\mathfrak{V}^*= (V^*, \in, Hyp(V^*))$ elementary to $V$ for truth in $\mathscr{L}_\mathbf{NBG} \cup \{ H(x) \}$, and hence the corresponding $\mathfrak{V}^*$-logic (formalisable in $Hyp(\mathfrak{V}^*)$) agrees with $V$-logic. The difference being here, of course, that since $\mathfrak{V}^*$ is countable, the Barwise completeness theorem\footnote{See \cite{Barwise1975a}, p. 99, Theorem 5.5.} holds, and hence there is a model \textit{very similar to} $V$ that is \textit{really} extended when we consider different kinds of construction. Put in concrete terms, we have:

\begin{fact}
($\mathbf{MK}^+$) Let $\phi$ be a sentence of $V$-logic with no constant symbols apart from $\bar{V}$. Then the following are equivalent:

\begin{enumerate}[(1.)]
\item $\phi$ is consistent in $V$-logic.
\item $\phi$ is consistent in $\mathfrak{V}^*$-logic.
\item $\mathfrak{V}^*$ has an outer model with $\phi$ true.
\end{enumerate}
\end{fact}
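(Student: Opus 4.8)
The plan is to establish the two equivalences $(1) \Leftrightarrow (2)$ and $(2) \Leftrightarrow (3)$ separately; this gives all three equivalences at once. The first is an absoluteness argument riding on the elementarity of $\mathfrak{V}^*$, and the second is a direct application of the Barwise completeness theorem, which is available precisely because $\mathfrak{V}^*$ is countable.

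For $(1) \Leftrightarrow (2)$, the first observation is that, by Theorem \ref{hypVproof} together with its (trivial) converse that every proof code lying in $Hyp(V)$ is a genuine $V$-logic proof, provability in $V$-logic coincides exactly with the existence of a proof code inside $Hyp(V)$. Hence ``$\phi$ is consistent in $V$-logic'' is equivalent to the statement that no element of $Hyp(V)$ codes a $V$-logic proof of a contradiction from $\phi$. This is a statement in the language $\mathscr{L}_\mathbf{NBG} \cup \{ H(x) \}$: it quantifies over the class coded by $H$ (i.e. $Hyp(V)$) and uses the truth predicate $T_H$ to evaluate the relevant proof-code conditions. Since $\mathfrak{V}^*$ was built to be elementary in $V$ for exactly this language, and since the internal interpretation of $H$ in $\mathfrak{V}^*$ is $Hyp(\mathfrak{V}^*)$ (its genuine least admissible, by Lemma \ref{HypL}, because $\mathfrak{V}^*$ is transitive), the consistency assertion transfers downward: no contradiction from $\phi$ is derivable in $Hyp(V)$ iff none is derivable in $Hyp(\mathfrak{V}^*)$, which by the same theorem applied to $\mathfrak{V}^*$ is just the consistency of $\phi$ in $\mathfrak{V}^*$-logic. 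The hypothesis that $\phi$ has no constant symbols apart from $\bar{V}$ is what makes this work: it guarantees $\phi$ is a single syntactic object belonging to both $V$-logic and $\mathfrak{V}^*$-logic, so that the absoluteness step is not derailed by constants $\bar{x}$ naming sets $x$ that have no counterpart after the collapse to the countable model.

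For $(2) \Leftrightarrow (3)$, I would appeal to the Barwise completeness theorem cited above. Because $\mathfrak{V}^*$ is countable and transitive, $Hyp(\mathfrak{V}^*)$ is a countable admissible set, so the completeness theorem applies to $\mathfrak{V}^*$-logic and yields: the theory axiomatised by $\phi$ (together with the clauses fixing $\bar{V}$ to denote $\mathfrak{V}^*$ as a standard part) is consistent if and only if it has a model. Unwinding the semantics of $\mathfrak{V}^*$-logic, such a model is exactly an outer model of $\mathfrak{V}^*$ in which $\bar{V}$ names $\mathfrak{V}^*$ standardly and $\phi$ holds, giving $(2) \Leftrightarrow (3)$ immediately.

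The main obstacle is the $(1) \Leftrightarrow (2)$ step, and specifically the care needed to see that ``consistency in $V$-logic'' really is an $\mathscr{L}_\mathbf{NBG} \cup \{ H \}$-statement whose truth value is preserved by the elementary submodel. This rests on two points that must be verified: that provability in $V$-logic is faithfully captured by proof codes in $Hyp(V)$ (Theorem \ref{hypVproof} and its converse), and that the restriction on constants makes $\phi$ a legitimate shared sentence so that elementarity can be invoked at all. By contrast, $(2) \Leftrightarrow (3)$ is essentially a citation, and crucially one that fails at the level of $V$ itself: the analogue ``$\phi$ is consistent in $V$-logic iff $V$ has an outer model with $\phi$'' is unavailable to the Universist because $V$ is uncountable and its putative outer models do not exist, which is exactly why the reduction to the countable $\mathfrak{V}^*$ is needed.
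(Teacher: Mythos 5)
Your proposal is correct and takes essentially the same route as the paper: the paper's own (largely implicit) justification of this Fact is exactly the elementarity of $\mathfrak{V}^* = (V^*, \in, Hyp(V^*))$ with respect to truth in $\mathscr{L}_\mathbf{NBG} \cup \{H(x)\}$ for the equivalence of (1.) and (2.) (using that consistency in $V$-logic is captured by the existence of proof codes in $Hyp(V)$), together with the Barwise completeness theorem, available because $\mathfrak{V}^*$ is countable, for the equivalence of (2.) and (3.). Your observations about the converse of the proof-code theorem and about why $\phi$ may contain no constants other than $\bar{V}$ are exactly the details the paper glosses over, and they are handled correctly.
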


Thus when we want to know if it is consistent to have an `outer model' of $V$ satisfying some first-order property mentioning $V$ as a predicate with parameters definable in $V$, we can look to $\mathfrak{V}^*$ (where extensions are readily available). Of course one does not consider ``extensions of $V$'' as having anything other than syntactic meaning, but one can find an exceptionally close simulacrum of this reasoning in the countable and explain how its truth and structure is related to $V$ (via the correspondence between $V$ and $\mathfrak{V}^*$ with respect to first-order and admissible truth). Thus, by linking $V$-logic to the countable, we augment our answer to the Hilbertian Challenge with a response to the Methodological Constraint, explaining the existence of particular models that closely resemble the naive reasoning with extensions and use of the term `$V$' in set-theoretic discourse.

\subsubsection*{Omniscience}

In the previous subsection, we examined the possibility of reducing the theory of $Hyp(V)$ to a countable transitive model $\mathfrak{V}^*$. In this subsection, we examine one further development, namely that if $V$ satisfies certain greater than first-order properties then the theory of $V$'s outer models becomes {\it first-order definable} in $V$.
 
Are there conditions that $V$ might satisfy allowing first-order access to the theory of its outer models? \cite{FriedmanHonzik2016b}, building on work of \cite{StanleyUa} examine exactly this. First, however, we note a limitation to any attempt of this kind:

  \begin{theorem}
  ($\mathbf{ZFC}$) \cite{StanleyUa}\footnote{This theorem is a quick consequence of the Fixed Point Lemma. See \cite{FriedmanHonzik2016b} for details.} No first-order hypotheses on $V$ suffice to give it access to the theory of its outer models. More precisely, assume that:
   \begin{enumerate}[(i)]
    \item $\mathbf{ZFC}^*$ is a recursive first-order theory extending $\mathbf{ZFC}$, and that $\mathbf{ZFC}^*$ has countable standard transitive models.
    \item $\mathsf{good}(x)$ is a parameter-free formula such that if $\mathfrak{M}$ is a countable standard transitive model of $\mathbf{ZFC}^*$, $\mathbf{T} \in \mathfrak{M}$ is a set of axioms and $\mathfrak{M} \models \mathsf{good}[\mathbf{T}]$, then $\mathfrak{M}$ has an outer model that satisfies $\mathbf{T}$. 
    \item $\mathsf{bad}(x)$ is a parameter-free formula such that if $\mathfrak{M}$ is a countable standard transitive model of $\mathbf{ZFC}^*$, $\mathbf{T} \in \mathfrak{M}$ is a set of axioms and $\mathfrak{M} \models \mathsf{bad}[\mathbf{T}]$, then $\mathfrak{M}$ does not have an outer model that satisfies $\mathbf{T}$.
   \end{enumerate}

   Then there exists a recursive $T \supseteq \mathbf{ZFC}^*$ such that neither $\mathsf{good}[T]$ nor $\mathsf{bad}[T]$ holds in any countable standard transitive model of $\mathbf{ZFC}^*$.
  \end{theorem}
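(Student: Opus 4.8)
The plan is to build, via the Fixed Point Lemma, a single recursive theory $T \supseteq \mathbf{ZFC}^*$ that is deliberately self-referential, and to play the \emph{soundness} of $\mathsf{good}$ and $\mathsf{bad}$ against that self-reference. The guiding idea is that $\mathsf{good}$ is sound for ``$\mathfrak{M}$ has an outer model of $T$'' and $\mathsf{bad}$ is sound for ``$\mathfrak{M}$ has no outer model of $T$.'' So to defeat $\mathsf{good}[T]$ it would suffice that $T$ be \emph{not} realisable in any outer model, and to defeat $\mathsf{bad}[T]$ it would suffice that $T$ \emph{be} so realisable; the trick is to arrange, through the diagonal sentence, that which of these obtains is forced by the very predicate being tested. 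I would therefore apply the Fixed Point Lemma to obtain a recursive theory $T = \mathbf{ZFC}^* \cup \{\sigma\}$, where $\sigma$ is a (Rosser-style) sentence speaking about its own Gödel code $\ulcorner T\urcorner$ through $\mathsf{good}$ and $\mathsf{bad}$, with the precise polarity to be pinned down in the verification.

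The first fact I would record is a closure observation: if $\mathfrak{M}$ is a countable standard transitive model of $\mathbf{ZFC}^*$ and $\mathfrak{N}$ is an outer model of $\mathfrak{M}$ with $\mathfrak{N} \models T$, then $\mathfrak{N}$ is again a countable standard transitive model of $\mathbf{ZFC}^*$ (outer models are transitive, outer models of countable models are countable, and $T \supseteq \mathbf{ZFC}^*$). Hence both $\mathsf{good}$ and $\mathsf{bad}$ apply to $\mathfrak{N}$, and since $\sigma \in T$ every such $\mathfrak{N}$ inherits the self-referential claim $\sigma$; this is what lets the diagonal content propagate between a ground and its extension. I would then split into two cases. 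If some ctm $\mathfrak{M}$ satisfies $\mathsf{good}[T]$, soundness yields an outer model $\mathfrak{N} \models T$, hence $\mathfrak{N}\models \sigma$, and the aim is to read off from $\sigma$ (using $\mathsf{bad}$ applied to the appropriate model) that $T$ \emph{cannot} be realised in an outer model, contradicting the very $\mathfrak{N}$ just produced. Symmetrically, if some ctm satisfies $\mathsf{bad}[T]$, the content of $\sigma$ together with the soundness of $\mathsf{good}$ should force an outer model of $T$ to exist, contradicting $\mathsf{bad}$'s conclusion that there is none. In each case the hypothesised model is impossible, so neither $\mathsf{good}[T]$ nor $\mathsf{bad}[T]$ holds in any ctm of $\mathbf{ZFC}^*$.

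The main obstacle, and the step requiring genuine care, is the exact formulation of $\sigma$ so that both horns close from the \emph{same} $T$. Because $\mathsf{good}$ and $\mathsf{bad}$ are assumed only sound and not complete, and because each is evaluated \emph{locally} inside a model, I cannot simply assert ``$T$ has an outer model'' as a first-order sentence over $V$ — the impossibility of exactly this is the phenomenon the theorem records. The self-reference must therefore route the comparison between a model and its outer model entirely through the predicates $\mathsf{good}$, $\mathsf{bad}$ and the inherited sentence $\sigma$, using the closure fact to pass between $\mathfrak{M}$ and $\mathfrak{N}$. Getting the polarity of $\sigma$ right, so that the ``has an outer model'' horn defeats $\mathsf{good}$ while the ``no outer model'' horn defeats $\mathsf{bad}$, is the delicate point; once the sentence is pinned down, the Fixed Point Lemma delivers the required recursive $T$ and, as the cited sources indicate, the remaining verification is short. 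Finally I would check the argument against hypothesis (i), that $\mathbf{ZFC}^*$ has countable standard transitive models, which ensures the case analysis is non-vacuous and that the models invoked genuinely instantiate the soundness clauses.
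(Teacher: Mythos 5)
Your overall strategy---diagonalise via the Fixed Point Lemma and play the soundness of $\mathsf{good}$ and $\mathsf{bad}$ against the resulting self-reference---is exactly the route the paper points to (it gives no proof beyond the footnote that the theorem ``is a quick consequence of the Fixed Point Lemma''). But your proposal stops precisely where the mathematical content lies: you never write down $\sigma$, and you explicitly defer ``getting the polarity right'' to a later verification. That step is the whole proof, and it is simpler than your Rosser-style framing suggests: take $\sigma$ to be a G\"{o}del-style fixed point with $\mathbf{ZFC}^* \vdash \sigma \leftrightarrow \mathsf{bad}(\ulcorner \mathbf{ZFC}^* + \sigma \urcorner)$, and put $T = \mathbf{ZFC}^* + \sigma$. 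Only $\mathsf{bad}$ occurs in the diagonal sentence; no comparison of the two predicates inside $\sigma$ is needed (Rosser-type tricks compensate for the absence of a soundness hypothesis, which here you are given).

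The second ingredient missing from your sketch---and the one your two cases actually founder on---is the trivial but essential convention that \emph{every model is an outer model of itself} (the paper relies on this usage too, e.g.\ when it notes in the $\mathsf{IMH}$ discussion that ``the outer model is $V$ itself'' is a permitted case). With it, both horns close. If some countable standard transitive model $\mathfrak{M}$ of $\mathbf{ZFC}^*$ satisfied $\mathsf{bad}[T]$, then, since the fixed-point equivalence is provable in $\mathbf{ZFC}^*$, we get $\mathfrak{M} \models \sigma$ and hence $\mathfrak{M} \models T$; so $\mathfrak{M}$ is itself an outer model of $\mathfrak{M}$ satisfying $T$, contradicting the soundness of $\mathsf{bad}$. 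If some such $\mathfrak{M}$ satisfied $\mathsf{good}[T]$, soundness gives an outer model $\mathfrak{N} \models T$; by your closure observation $\mathfrak{N}$ is again a countable standard transitive model of $\mathbf{ZFC}^*$, and $\mathfrak{N} \models \sigma$, i.e.\ $\mathfrak{N} \models \mathsf{bad}[T]$, which is impossible by the previous case. Note that in this second horn the contradiction is \emph{not}, as you put it, with ``the very $\mathfrak{N}$ just produced'' qua outer model of $\mathfrak{M}$: the predicate $\mathsf{bad}$ evaluated inside $\mathfrak{N}$ speaks only about outer models \emph{of $\mathfrak{N}$}, so without the reflexivity observation your case analysis does not terminate in a contradiction.
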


 The theorem shows that we will \textit{never} be able to give {\it purely} first-order conditions that give $V$ access to its outer models. However, there is still the possibility that {\it second-order} hypotheses might confer {\it first-order} definability on the theory of $V$'s outer models. Indeed this is so (given large cardinals):

  \begin{theorem}
  ($\mathbf{NBG}$) \cite{StanleyUa} (restated and proved in \cite{FriedmanHonzik2016b}) Suppose that $\mathfrak{M}$ is a transitive model of $\mathbf{ZFC}$ (of which $V$ is one). Suppose that in $\mathfrak{M}$ there is a proper class of measurable cardinals, and this class is $Hyp(\mathfrak{M})$-stationary, i.e. $Ord(\mathfrak{M})$ is regular with respect to $Hyp(\mathfrak{M})$-definable functions and the class of all measurable cardinals in $\mathfrak{M}$ intersects every club in $Ord(\mathfrak{M})$ which is $Hyp(\mathfrak{M})$-definable. Then satisfaction in outer models of $\mathfrak{M}$ is first-order definable over $\mathfrak{M}$.
  \end{theorem}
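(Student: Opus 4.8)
The plan is to combine the identification of outer-model satisfaction with consistency in $\mathfrak{M}$-logic (from \S\S4--6) with a reflection argument that replaces the proper-class-sized $Hyp(\mathfrak{M})$ by set-sized approximations at measurable cardinals. Write $\mathrm{OM}(\ulcorner\phi\urcorner,\bar p)$ for ``$\mathfrak{M}$ has an outer model (with $\mathfrak{M}$ standard) satisfying $\phi(\bar p)$'', where $\bar p$ is a tuple of parameters from $\mathfrak{M}$. By the machinery of $V$-logic, $\mathrm{OM}(\ulcorner\phi\urcorner,\bar p)$ holds iff the theory $T_\phi$ comprising the $\bar W$-Width Axiom and $\phi(\bar p)$ is consistent under $\vdash_{\mathfrak{M}}$, equivalently iff no proof code of $\bot$ from $T_\phi$ occurs in $Hyp(\mathfrak{M})$. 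The first step is to localise: for a measurable $\kappa$ of $\mathfrak{M}$ we have $V_\kappa \models \mathbf{ZFC}$, and any outer model $W$ of $V_\kappa$ adds only subsets of sets of rank below $\kappa$, so $W \in V_{\kappa+2}$. Hence the local statement $\mathrm{OM}_\kappa(\ulcorner\phi\urcorner,\bar p) \equiv$ ``some $W \in V_{\kappa+2}$ is an outer model of $V_\kappa$ satisfying $\phi(\bar p)$'' (for $\bar p \in V_\kappa$) is \emph{already} first-order over $\mathfrak{M}$, uniformly in $\kappa$ and $\ulcorner\phi\urcorner$. The proof then comes down to the equivalence $(\star)$: $\mathrm{OM}(\ulcorner\phi\urcorner,\bar p)$ holds iff the class $S_\phi = \{\kappa : \kappa \text{ measurable and } \mathrm{OM}_\kappa(\ulcorner\phi\urcorner,\bar p)\}$ is $Hyp(\mathfrak{M})$-stationary, after which one checks that this right-hand side is captured by a genuine first-order formula over $\mathfrak{M}$.

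For the forward direction I would take an outer model $\mathfrak{M}^*$ of $\mathfrak{M}$ with $\mathfrak{M}^* \models \phi(\bar p)$ and form the rank-initial segments $(\mathfrak{M}^*)_\alpha = \{x \in \mathfrak{M}^* : \mathrm{rank}^{\mathfrak{M}^*}(x) < \alpha\}$; these are outer models of $V_\alpha$, and by reflection $(\mathfrak{M}^*)_\alpha \prec \mathfrak{M}^*$ (so $(\mathfrak{M}^*)_\alpha \models \phi(\bar p)$) for a closed unbounded class $C$ of $\alpha$. Because $\mathfrak{M}^*$ is coded inside $Hyp(\mathfrak{M})$ by its consistent $\mathfrak{M}$-logic theory, $C$ is $Hyp(\mathfrak{M})$-definable; the hypothesis that $Ord(\mathfrak{M})$ is regular with respect to $Hyp(\mathfrak{M})$-definable functions guarantees that $C$ is truly unbounded. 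The $Hyp(\mathfrak{M})$-stationarity of the measurables then places a measurable (indeed stationarily many) in $C$, each witnessing $\mathrm{OM}_\kappa$ through $(\mathfrak{M}^*)_\kappa$, so $S_\phi$ is $Hyp(\mathfrak{M})$-stationary.

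The backward direction is where I expect the main difficulty. Arguing contrapositively, a failure of $\mathrm{OM}(\ulcorner\phi\urcorner,\bar p)$ yields a proof code $p \in Hyp(\mathfrak{M})$ refuting $T_\phi$, and I must show $S_\phi$ is non-stationary. The intended mechanism is to lift each ultrapower map $j : \mathfrak{M} \to N$ at a measurable $\kappa$ (with $V_\kappa^N = V_\kappa$ and $j\upharpoonright V_\kappa = \mathrm{id}$) to an embedding of the admissible covers $\hat\jmath : Hyp(\mathfrak{M}) \to Hyp(N)$, and, using the absoluteness of $Hyp(V_\kappa)$ between $\mathfrak{M}$ and $N$, to extract from $p$ a genuine set-sized $V_\kappa$-logic refutation of the localised theory $T_\phi^\kappa$ on a $Hyp(\mathfrak{M})$-definable club of $\kappa$; soundness of $V_\kappa$-logic then rules out any local outer model at those $\kappa$, so $S_\phi$ misses a club and is non-stationary. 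Two points are delicate and form the crux: (i) verifying that $j$ lifts to and commutes with the $Hyp$ operation, and (ii) reflecting the possibly class-branching $V$-rule proof $p$ down to a set-sized $V_\kappa$-rule proof for club-many $\kappa$ --- it is precisely here that the regularity of $Ord(\mathfrak{M})$ forces the reflected witnesses to be sets, and the $Hyp(\mathfrak{M})$-stationarity guarantees that a measurable is met at each reflection point.

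Finally, to discharge the first-orderness of the right-hand side of $(\star)$ --- and this is a genuine subtlety, since ``stationary'' for a proper class is not prima facie a single first-order condition --- I would again invoke the two hypotheses: the $Hyp(\mathfrak{M})$-stationarity of the measurables together with the regularity of $Ord(\mathfrak{M})$ lets one rewrite ``$S_\phi$ is $Hyp(\mathfrak{M})$-stationary'' as a first-order assertion about the distribution of the local witnesses $\mathrm{OM}_\kappa$ among the measurables of $\mathfrak{M}$ (rather than a quantification over arbitrary clubs). Granting $(\star)$ and this rendering, the resulting formula defines outer-model satisfaction over $\mathfrak{M}$, completing the argument. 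I expect the soundness/reflection transfer of step (ii) in the backward direction, and the correct threshold (stationary versus merely cofinal) it forces, to be the principal technical obstacles.
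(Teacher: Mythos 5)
First, a point of comparison: the paper does not actually prove this theorem --- it is quoted from \cite{StanleyUa} with the proof deferred to \cite{FriedmanHonzik2016b} --- so your proposal can only be judged against the cited argument and on its own merits. Judged that way, there is a fatal gap at your very first step, the localisation. You define $\mathrm{OM}_\kappa$ by quantifying over \emph{actual} set-sized outer models $W \in V_{\kappa+2}$ of $V_\kappa$. But no such $W$ other than $V_\kappa$ itself can exist: if $W$ is transitive, models $\mathbf{ZFC}$, contains $V_\kappa$ and has $W \cap Ord = \kappa$, then (rank being absolute for transitive models) every element of $W$ has rank below $\kappa$, and the true $V_\kappa$ already contains \emph{every} set of rank below $\kappa$; hence $W = V_\kappa$. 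This is precisely the reason the paper interprets outer models syntactically in the first place. So your $\mathrm{OM}_\kappa(\ulcorner\phi\urcorner,\bar p)$ collapses to ``$V_\kappa \models \phi(\bar p)$'', and the equivalence $(\star)$ is then false outright: take $\bar p = \omega_1$ and $\phi(x) \equiv$ ``$x$ is countable''. The corresponding $V$-logic theory is consistent (the L\'{e}vy collapse is a set forcing, already handled by the paper's machinery), so the left-hand side of $(\star)$ holds; but every $V_\kappa$ with $\kappa$ inaccessible computes $\omega_1$ correctly, so $S_\phi = \emptyset$ is as non-stationary as can be. The correct localisation --- and the one the cited proof uses --- is consistency of the localised theory in $V_\kappa$-logic, verified inside the \emph{set} $Hyp(V_\kappa)$; it is this, not the existence of a set-sized extension, that is first-order over $\mathfrak{M}$ uniformly in $\kappa$.

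Second, the two directions of $(\star)$ have their tools interchanged. In the forward direction you ``take an outer model $\mathfrak{M}^*$ of $\mathfrak{M}$'' --- but when $\mathfrak{M} = V$ (the case the theorem is for) there is no such object: outer-model satisfaction means nothing over and above consistency in $V$-logic, and consistency does not yield a model, since Barwise completeness fails for uncountable admissible sets (this is exactly why the paper must descend to the countable $\mathfrak{V}^*$ before actual extensions appear; your phrase ``$\mathfrak{M}^*$ is coded inside $Hyp(\mathfrak{M})$ by its consistent theory'' conflates a theory with a model). This direction must instead be run contrapositively and syntactically: from a refutation $p \in Hyp(V)$ one takes elementary hulls inside $Hyp(V)$ whose intersections with $V$ are of the form $V_\kappa$ --- Ord-regularity makes the class of such $\kappa$ a $Hyp(V)$-definable club --- and collapses $p$ to a local refutation in $Hyp(V_\kappa)$; no measurables are needed here. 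Conversely, the place where measurability genuinely does work in the cited argument is the direction you leave underspecified: local consistency at a measurable $\kappa$ is stretched to global consistency in $V$-logic by \emph{iterating} the ultrapower through all the ordinals, the well-foundedness of the iterates and the club of critical points being what the measure buys. Your proposed mechanism --- pushing the class-sized refutation $p$ down through a lift $\hat\jmath : Hyp(\mathfrak{M}) \to Hyp(N)$ --- goes the wrong way along the embedding: a general $p \in Hyp(\mathfrak{M})$ need not lie in the range of $\hat\jmath$, so nothing can be extracted from it in this fashion. In outline (localise at measurables, prove a two-way transfer) your architecture matches the cited proof, but both transfer arguments, as you describe them, would fail.
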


 What does this theorem show? Namely that if there are classes with certain slightly greater than first-order properties (the $Hyp(V)$-stationarity of the measurables is not first-order definable), then there is a first-order formula that captures satisfaction in outer models of $V$. Indeed, the consistency strength of outer-model satisfaction being first-order definable within a model has currently been found consistent relative to the existence of a single inaccessible cardinal.\footnote{See \cite{FriedmanHonzik2016b} for discussion.} This provides a different perspective on outer model satisfaction, and again shows the relevance of $Hyp(V)$: As long as $V$ satisfies the existence of the relevant measurables and their $Hyp(V)$-stationarity, then if there is a countable transitive model elementarily equivalent to $V$ for $\mathbf{ZFC}$ augmented with a predicate for $Hyp(V)$, then we can behave exactly as if outer models of $V$ actually existed. In this case, our original $\mathfrak{V}$, equivalent to $V$ for first-order truth, would suffice for interpreting extension talk concerning $V$.
 
 In sum, as long as we have a countable transitive model that sufficiently mirrors $V$, we then have the Methodological Constraint satisfied. Whenever we reason naively about extensions, we can interpret this as concerned with the relevant countable transitive model (i.e. $\mathfrak{V}$, or $\mathfrak{V}^*$). Here, extensions are uncontroversially available, and so we may perfectly well reason naively and combinatorially about {\it them}, safe in the knowledge that any facts so discovered will be mirrored by $V$. 

\section{Open Questions}

Before we conclude, we make a few final remarks concerning salient open questions. The first concerns what is possible using the mechanisms of $V$-logic. In this paper, we have only examined {\it width} extensions, and made some remarks about height or non-well-founded extensions. However, there is some extension talk which seems to require that $V$ appear non-standard from a different perspective. \cite{Hamkins2012a} and \cite{GitmanHamkins2010a} for example consider the following as part of their Multiverse Axioms:

 \begin{axiom}
  {\it Well-foundedness Mirage.} Every universe (including $V$ situated inside a multiverse) is non-well-founded from the perspective of another universe.
 \end{axiom}

One issue here is that $V$-logic {\it explicitly} keeps $V$ standard. While we can interpret non-well-founded universes using $V$-logic, as it stands $V$ will always appear in them as a standard, well-founded part. We then have the following question:

 \begin{question}
  How much talk concerning other kinds of extensions of $V$ can be interpreted using $V$-logic (or similar constructions)?
 \end{question}

A further question concerns the role of class theory. Throughout this paper we have assumed the use of some impredicative class theory. As we indicated, there are many options for a Universist to interpret impredicative class theory in a philosophically motivated fashion. Despite this, there will no doubt be some who find the use of impredicative class theory uncomfortable. We might then, in this spirit, ask the following question:

 \begin{question}
  How much $V$-logic can we capture without the use of any class theory?
 \end{question}

 This in turn raises the following problem:
 
 \begin{question}
  How much talk concerning {\it arbitrary} extensions can be captured without the use of impredicative classes?
 \end{question}

Finally, we recall a previously mentioned open :
 
 \vspace{0.2cm}
 
 \noindent {\bf Question \ref{IMH1}.} Could there be a countable transitive model $\mathfrak{M}=(M, \in, C^\mathfrak{M})$ of $\mathbf{NBG}$, such that $\mathfrak{M}$ has a countable (from the perspective of $\mathfrak{M}$) transitive submodel 
 \sloppy $\mathfrak{M}'=(M', \in, C^{\mathfrak{M}'})$, 
 also a model of $\mathbf{NBG}$, with $\mathfrak{M}$ and $\mathfrak{M}'$ agreeing on parameter-free first-order truth in $\mathbf{ZFC}$ but disagreeing on the $\mathsf{IMH}$?
 
\section*{Conclusions}

 We have seen that the Universist has some reason to want to use extension talk in  formulating axioms about $V$. An expansion of logical resources, combined with a degree of impredicative comprehension, facilitates an interpretation of extension talk that finds a place for our naive reasoning concerning extensions.

We make some final remarks concerning the state of the dialectic. We wish to be conservative about the philosophical implications of our results. What we have established is that those Universists who accept that extension talk concerning $V$ is worth scrutiny and who think that the Methodological Constraint is important in answering the Hilbertian Challenge can encode a substantial amount of talk concerning extensions of $V$ using sets from $V$ and/or classes over $V$, so long as they regard some impredicative class theory as legitimate and think that we can extend our logical resources using $V$-logic. There are an awful lot of moving parts involved in setting up this dialectic and we should be mindful that there are different upshots one might take. Obviously, one might think that the paper shows that extension talk can be used freely by the Universist. However,  one might instead take our results to show that there is a problem with talking about extended languages or the use of $\mathbf{MK}^+$. Or one might think that the Methodological Constraint is too strict, or not strict enough. Further still, one might take the sheer amount of talk of extensions of $V$ that can be coded within the Universist's framework to be evidence of the falsity of their position. We have not taken a stand on any of these issues here, but have shown what can be accomplished given the acceptance of certain positions and resources. Nonetheless, the fact remains that with a smidgeon of extra expressive resources, the Universist can utilise far more mathematics than previously thought. The doors are thus open to new and intriguing philosophical and mathematical discussions.

\end{document}